 \newcommand{\Galg}{\mathbf{G}}
 \newcommand{\Halg}{\mathbf{H}}
 \newcommand{\Balg}{\mathbf{B}}
 \newcommand{\cprod}{\centerdot}
 \newcommand{\Zalg}{\mathbf{Z}}
 \newcommand{\Ualg}{\mathbf{U}}
 \newcommand{\Palg}{\mathbf{P}}
 \newcommand{\Lalg}{\mathbf{L}}
 \newcommand{\Talg}{\mathbf{T}}
 \newcommand{\Xalg}{\mathbf{X}}
 \newcommand{\PSL}{\operatorname{PSL}}
 \newcommand{\Orth}{\operatorname{P\Omega}}
 \newcommand{\PSp}{\operatorname{PSp}}
 \newcommand{\Out}{\operatorname{Out}}
 \newcommand{\Stab}{\operatorname{Stab}}
 \newcommand{\Ind}{\operatorname{Ind}}
 \newcommand{\Res}{\operatorname{Res}}
 \newcommand{\Nn}{\operatorname{N}}
 \newcommand{\Cen}{\operatorname{C}}
 \newcommand{\cyc}[1]{\langle\,#1\,\rangle}
 \newcommand{\C}{\mathbb{C}}
 \newcommand{\F}{\mathbb{F}}
 \newcommand{\N}{\mathbb{N}}
 \newcommand{\rk}{\operatorname{rk}}
 \newcommand{\Z}{\mathbb{Z}}
 \newcommand{\Irr}{\operatorname{Irr}}
 \newcommand{\cal}[1]{\mathcal #1}
\newtheorem{theorem}{Theorem}[section] 
\newtheorem{lemma}[theorem]{Lemma}     
\newtheorem{corollary}[theorem]{Corollary}
\newtheorem{proposition}[theorem]{Proposition}
\newtheorem{hypothese}[theorem]{Hypothesis}
\newtheorem{remark}[theorem]{Remark}
\title[On equivariant bijections relative to the defining characteristic]
 {On equivariant bijections relative to the defining characteristic} 
\author{Olivier Brunat}
\address{OB: Ruhr-Universit\"at Bochum\\
Fakult\"at f\"ur Mathematik\\
Raum NA 2/33\\
D-44780 Bochum\\}
\email{Olivier.Brunat@ruhr-uni-bochum.de}
\author{Frank Himstedt}
\address{FH: Technische Universit\"at M\"unchen\\
Zentrum Mathematik M11\\
Boltzmannstr.~3\\
D-85748 Garching\\}
\email{himstedt@ma.tum.de}
\subjclass{20C15,\ 20C33}
\begin{document}
\begin{abstract}
This paper is a contribution to the general program introduced by
Isaacs, Malle and Navarro to prove the McKay conjecture in the
representation theory of finite groups. We develop new methods for
dealing with simple groups of Lie type in the defining
characteristic case. Using a general argument based on the
representation theory of connected reductive groups with disconnected
center, we show that the inductive McKay condition holds if the Schur
multiplier of the simple group has order $2$. As a consequence,  the
simple groups $\Orth_{2m+1}(p^n)$ and $\PSp_{2m}(p^n)$ are 
``good'' for $p>2$ and the simple groups $E_7(p^n)$ are ``good'' for  
$p>3$ in the sense of Isaacs, Malle and Navarro. We also describe the
action of the diagonal and field automorphisms on the semisimple and
the regular characters.   
\end{abstract}

\maketitle

%%%%%%%%%%%%%%%%%%%%%%%%%%%%%%%%%%%%%%%%%%%%%%%%%%%%%%%%%%%%%%%%%%%%%%%%%%%%%%%%

\section{Introduction}
\label{intro}

Let $G$ be a finite group, $p$ a prime dividing the order of $G$ and
$P$ a Sylow $p$-subgroup of $G$. The McKay conjecture asserts that the
number $|\Irr_{p'}(G)|$ of irreducible complex characters of $G$ of
degree not divisible by $p$ coincides with the number  
$|\Irr_{p'}(N_G(P))|$ of irreducible complex characters of the
normalizer $N_G(P)$ of degree not divisible by~$p$.

In \cite{IMN}, Isaacs, Malle and Navarro reduced the proof of the McKay
conjecture to a question about finite simple groups. They were able to
prove that the McKay conjecture is true for all finite groups if every
finite non-abelian simple group is ``good'' for all prime numbers $p$;
see \cite[Section 10]{IMN} for a precise formulation.

Malle has shown that all simple groups not of Lie type and all simple
groups of Lie type with exceptional Schur multiplier are ``good'' for all
primes $p$ \cite{mallenotlie}. Furthermore,
Malle \cite{malleextuni}, \cite{malleheight0} and
Sp\"ath \cite{spaethdiss} proved important results for simple groups
$G$ of Lie type and primes $p$ different from the defining
characteristic. The case where $G$ is a 
simple group of Lie type and $p$ the defining characteristic was
considered by several authors. It was shown in \cite{IMN} that the
simple groups $\PSL_2(q)$, ${^2B}_2(2^{2n+1})$, ${^2G}_2(3^{2n+1})$ and
in \cite{HH1}, \cite{HH2} that the simple groups ${^2F}_4(2^{2n+1})$
and ${^3D}_4(2^n)$ are ``good'' for the defining characteristic. A uniform
treatment of simple groups of Lie type with trivial Schur multiplier
and cyclic outer automorphism group in the case of defining
characteristic was obtained in \cite{Br6}. In particular, the results
in \cite{Br6} include that the simple groups $G_2(q)$, $F_4(q)$ and
$E_8(q)$ are ``good'' for the defining characteristic.

In the present paper, we consider certain simple groups of Lie type
with Schur multiplier of order $2$ in the defining characteristic
case. More precisely, our main result is the following  

\begin{theorem}\label{main}
Let $\Galg$ be a simple and simply-connected algebraic
group defined over the finite field $\F_q$ of characteristic $p>0$ with
corresponding Frobenius map $F:\Galg\rightarrow\Galg$. Let $\Zalg$ be
the center of $\Galg$ and let $W$ denote its Weyl group (relative to an
$F$-stable maximal torus $\Talg$ contained in an $F$-stable Borel
subgroup $\Balg$ of $\Galg$). Suppose that  
\begin{itemize}
\item the finite group $X=\Galg^F/\operatorname{Z}(\Galg^F)$ is simple,
\item the group $\Galg^F$ is the universal cover of $X$,
\item the automorphism induced by $F$ on $W$ is trivial,
\item the prime $p$ is good for $\Galg$ and
\item the center $\Zalg$ has order $2$.
\end{itemize}
Then the finite simple group $X$ is ``good'' for the prime~$p$.
\end{theorem}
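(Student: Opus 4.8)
The plan is to verify the inductive McKay condition of \cite[Section~10]{IMN} for the pair $(X,p)$. Note first that the hypotheses restrict $\Galg$ to types $A_1$, $B_n$, $C_n$ or $E_7$ --- the simple simply connected types with $|\Zalg|=2$ --- none of which has a nontrivial Dynkin diagram automorphism, while the triviality of the $F$-action on $W$ means that $F$ is an untwisted split Frobenius with $q=p^n$; hence $\Aut(X)=\operatorname{Inndiag}(X)\rtimes\cyc{\varphi}$ with $\varphi$ the field automorphism, and $\Out(X)\cong\Z/2\times\Z/n$ is abelian. I fix $P=\Ualg^F\in\operatorname{Syl}_p(\Galg^F)$, so that $\Nn_{\Galg^F}(P)=\Balg^F$, and a regular embedding $\Galg\hookrightarrow\widetilde{\Galg}$; the group $A:=\widetilde{\Galg}^F\rtimes\cyc{\varphi}$ then contains $\Galg^F$ as a normal subgroup, induces all of $\Aut(X)$ on $X$, and stabilises $P$, with $\Nn_A(P)=\widetilde{\Balg}^F\rtimes\cyc{\varphi}$. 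By \cite[Section~10]{IMN} it therefore suffices to produce an $A$-equivariant bijection
\[
\Omega\colon\Irr_{p'}(\Galg^F)\longrightarrow\Irr_{p'}(\Balg^F)
\]
which preserves the restriction to $\operatorname{Z}(\Galg^F)=\Zalg^F$, and, for every $\chi$, to check that the $2$-cocycle on $A_\chi/\Galg^F$ attached to $\chi$ agrees with the one on $\Nn_A(P)_{\Omega(\chi)}/\Balg^F$ attached to $\Omega(\chi)$; since $\Out(X)$ is abelian, the latter follows once both characters are shown to extend to their respective stabilisers.

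To describe the two sides: in the defining characteristic an irreducible character of $\Galg^F$ has $p'$-degree precisely when it is semisimple, so $\Irr_{p'}(\Galg^F)$ is parametrised, via Jordan decomposition, by the $\Galg^{*F}$-classes of semisimple elements $s$ of the adjoint group $\Galg^*$ refined by the component groups $\Cen_{\Galg^*}(s)/\Cen_{\Galg^*}(s)^{\circ}$ (which have order dividing $2$). On the local side, $\Balg^F=\Ualg^F\rtimes\Talg^F$, and since $p$ is good one has $\Ualg^F/[\Ualg^F,\Ualg^F]\cong\bigoplus_{\al\in\Delta}\Ualg_\al^F$; a short Clifford-theory argument shows that every $\psi\in\Irr_{p'}(\Balg^F)$ lies over a linear character $\theta$ of $\Ualg^F$, that $\theta$ has a canonical $\Talg^F_\theta$-invariant extension $\hat\theta$, and that
\[
\Irr_{p'}(\Balg^F)=\bigl\{\,\Ind_{\Ualg^F\Talg^F_\theta}^{\Balg^F}(\hat\theta\,\mu)\ :\ [\theta],\ \mu\in\Irr(\Talg^F_\theta)\,\bigr\}.
\]
Because $\Galg$ is simply connected, a $\Talg^F$-orbit $[\theta]$ is determined by its support $S=\{\al\in\Delta:\theta_\al\ne 1\}$ together, when $S$ is ``large'', with one of the $|\operatorname{H}^1(F,\Zalg)|=2$ available ``$\Zalg$-types'', while $\Talg^F_\theta=\bigcap_{\al\in S}\ker\al$ is abelian. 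The bijection $\Omega$ is then built by matching these data with the parametrisation of the semisimple characters --- the support $S$ with the position of $s$ in the Levi filtration, $\Irr(\Talg^F_\theta)$ with the corresponding fibre of the class map on $\Galg^{*F}$, and the two $\Zalg$-types with the two pieces of a split Lusztig series. The comparison of the actions of $\varphi$ on the two parametrisations is essentially transparent; the comparison of the actions of the diagonal automorphism is the crux, and it is here that the representation theory of the connected reductive group $\Galg$ \emph{with its disconnected centre $\Zalg\cong\Z/2$} is decisive: the splitting of the Gelfand--Graev character and the splitting of a Lusztig series are both governed by $\operatorname{H}^1(F,\Zalg)$, and one must show that $\Omega$ intertwines the resulting $\Zalg$-action with the translation action of $\Irr\bigl(\Balg^F/\Ualg^F\Talg^F_\theta\bigr)$ on the local side.

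Once $\Omega$ is in place with the stated equivariance, the extendibility --- hence the cohomological clause --- follows, and here $|\Zalg|=2$ enters in a soft but essential way. The quotients $\widetilde{\Galg}^F/\Galg^F\operatorname{Z}(\widetilde{\Galg})^F$ and $\widetilde{\Balg}^F/\Balg^F\operatorname{Z}(\widetilde{\Galg})^F$ have order $2$, and $\operatorname{Z}(\widetilde{\Galg})^F$ is central, so a semisimple character $\chi$ of $\Galg^F$ extends first to $\Galg^F\operatorname{Z}(\widetilde{\Galg})^F$ (a character of a subgroup of a central torus always extends), then to $\widetilde{\Galg}^F_\chi$ (the obstruction lies in $\operatorname{H}^2$ of a group of order dividing $2$), and then to $A_\chi$ (the obstruction lies in $\operatorname{H}^2$ of a group embedding into $\cyc{\varphi}$, hence cyclic); at each stage one must choose the extension so as to remain invariant under the next group, which is where the explicit description of the $\varphi$- and $\Zalg$-actions from the previous step is used. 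The same three-step argument applies on the local side to $\Omega(\chi)$, using the canonical extension $\hat\theta$ and the fact that $\Talg^F_\theta$ is abelian; compatibility with $\operatorname{Z}(\Galg^F)$ is built into the definition of $\Omega$. The main obstacle is thus not the formal structure but the concrete one: producing $\Omega$ in a choice-free way that is simultaneously equivariant for the field and diagonal automorphisms and compatible with $\operatorname{Z}(\Galg^F)$, uniformly across $B_n$, $C_n$ and $E_7$. The order-$2$ hypothesis is exactly what forces the two competing order-$2$ phenomena --- the $\Zalg$-twist of the Gelfand--Graev and Lusztig-series data on the global side, and the orbit structure of full-support linear characters of $\Ualg^F$ on the local side --- to match up explicitly.
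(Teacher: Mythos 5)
Your overall framing (the conditions of \cite[Section~10]{IMN}, $P=\Ualg^F$ with $\Nn_{\Galg^F}(P)=\Balg^F$, the regular embedding and $\mathcal{A}=\widetilde{\Galg}^F\rtimes K$) agrees with the paper, but the heart of the theorem is missing: you never actually establish the $\mathcal{A}$-equivariant, $\Zalg^F$-compatible bijection $\Omega$. Your plan is to match the Jordan/Gelfand--Graev parametrization of the semisimple characters directly against the Clifford-theoretic parametrization of $\Irr_{p'}(\Balg^F)$, and you yourself flag the comparison of the diagonal action and of the two $H^1(F,\Zalg)$-twists as ``the crux'' without giving an argument. That is exactly the obstruction: both labellings depend on non-canonical choices (a regular character $\phi_z$, elements $t_{z_j}$ as in Remark~\ref{rk:choix}, a choice of $\hat{\omega}^0_s$), so a ``choice-free'' matching is not available, and asserting that the field-automorphism comparison is ``transparent'' hides the content of Theorem~\ref{imageF} and Lemma~\ref{la:FrobchrB}. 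The paper does not construct a canonical $\Omega$ at all; it proves \emph{existence} by counting: for every divisor $j$ of $n$ and every central character it compares the numbers of $F_0^j$-stable $D$-orbits of size $1$ and $2$ on both sides (Lemmas~\ref{nuG} and~\ref{nuB}, Theorem~\ref{th:equivbij}), using the relative McKay counts for $\Galg^{F_0^j}$ and $\widetilde{\Galg}^{F_0^j}$ from \cite{Br8} and \cite{lehrer}, a careful analysis of the norm map $N_{F/F_0^j}$ on $\Zalg$ (which may fail to be surjective precisely because $\Zalg$ is disconnected), and then assembles an $A_\nu$-equivariant bijection abstractly (Proposition~\ref{Deq}, Corollary~\ref{cor:equi2}). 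Your sketch contains no substitute for these counting inputs, so the key step is a genuine gap.

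The cohomology clause is also not reduced correctly. Condition (8) of \cite[Section~10]{IMN} requires a single $G_\chi$-invariant $\gamma\in\Irr(C|\nu)$ for which the cocycle classes of $\chi\cprod\gamma$ and of $\Omega(\chi)\cprod\gamma$ coincide; it is not enough that $\chi$ and $\Omega(\chi)$ each extend to their stabilizers. In the paper, $\gamma$ is \emph{forced by the local side} (it is the $C$-component of the restriction of an extension of $\Phi_\nu(\chi)$ to $G'_\chi$, equation~(\ref{eq:gamma})), and one must then prove that the global character $\chi\cprod\gamma$ with this prescribed $\gamma$ admits an $F'$-stable extension to $\widetilde{\Galg}^F$. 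This is not a formal consequence of cyclic quotients: it is proved by counting the $F'$-stable elements of $E(\chi)$ and $L(\chi)$ and showing the restriction map $\Theta$ is surjective, using $|\widetilde{\Galg}^F/\Galg^F C|=2$. Your chain $\Galg^F\subset\Galg^F\operatorname{Z}(\widetilde{\Galg})^F\subset\widetilde{\Galg}^F_\chi\subset A_\chi$ with ``cyclic quotients have trivial $H^2$'' ignores exactly this compatibility: at the first step you must land on the prescribed $\gamma$ and simultaneously retain $F'$-stability, which you acknowledge (``one must choose the extension so as to remain invariant under the next group'') but do not carry out. Until both the counting argument for $\Omega$ and the $\gamma$-compatible extension argument are supplied, the proposal is a plausible outline rather than a proof.
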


A crucial step in the proof of Theorem~\ref{main} is to show the
existence of $\mathcal{A}$-equivariant bijections between certain
subsets of $\Irr_{p'}(\Galg^F)$ and $\Irr_{p'}(\Balg^F)$, where~$\Balg$
is an $F$-stable Borel subgroup of $\Galg^F$ and $\mathcal{A}$ is a
group of outer automorphisms of~$\Galg^F$ stabilizing $\Balg^F$. 
We had to solve several problems which do not show up in
the case where the Schur multiplier is trivial:

First, we had to consider the action of the diagonal and field
automorphisms of~$\Galg^F$ on $\Irr_{p'}(\Galg^F)$.
% For, one of the
% main problem is to describe the action of the automorphims of $\Galg^F$
% on the set of $p'$-characters.  
Under the assumptions of Theorem~\ref{main}, 
% if the
% characteristic $p$ is a good odd prime for $\Galg$, 
the set $\Irr_{p'}(\Galg^F)$ is the set of semisimple characters of
$\Galg^F$, and these characters are up to signs the duals (with respect to
Alvis-Curtis duality) of %the regular characters of $\Galg^F$,
%i.e. 
the irreducible constituents of the Gelfand-Graev
characters of $\Galg^F$. 
%Using Deligne-Lusztig theory, recall that 
When the center~$\Zalg$ of $\Galg$ is connected, we can label the
semisimple characters of $\Galg^F$ by the set of
semisimple conjugacy classes of $\Galg^{*F^*}$, where $(\Galg^*,F^*)$
is a pair dual to $(\Galg,F)$.  When the center of $\Galg$ is
not connected, there is a similar, but more complicated
parametrization. 
It depends on some additional parameters whose
choice is not canonical, and it is \emph{a priori} a difficult problem
to describe the action of the automorphisms of $\Galg^F$ on these
characters with respect to this labelling.  To solve this problem 
(see Subsection~\ref{rk:jordan}), we use the theory of Gelfand-Graev
characters for connected reductive groups with disconnected center
developed by Digne-Lehrer-Michel in~\cite{DLM} and~\cite{DLM2}. 

Second, we had to find a suitable parametrization of
$\Irr_{p'}(\Balg^F)$. When the center~$\Zalg$ of $\Galg$ is connected, the set of
orbits of $\Talg^F$ on the linear characters of the unipotent radical
of $\Balg^F$ can be parametrized by subsets of the set of simple
roots, and Clifford theory leads to a particularly nice
parametrization of $\Irr_{p'}(\Balg^F)$. When~$\Zalg$ is not
connected, Clifford theory still applies, but the parametrization
becomes more complicated. To solve this problem, we had to introduce,
as above, additional parameters whose choice is not canonical.

Third, the above bijections have to be compatible with linear
characters of the center $\Zalg^F$ of $\Galg^F$. To prove the
existence of such bijections we use counting arguments based on the
norm map $N_{F'^m/F'}:\Zalg^{F'^m}\rightarrow\Zalg^{F'}$, where $m$ is
some positive integer and $F':\Galg\rightarrow\Galg$ a Frobenius
map inducing some field automorphism of~$\Galg^F$. One of the
difficulties we had to face is, that, in general, the norm map is not
surjective (because the center of $\Galg$ is not connected).  

Finally, in the situation of Theorem~\ref{main}, the cohomology
condition occurring in the definition of ``good'' becomes non-trivial
and has to be treated. In particular, we had to study extensions of
the characters of $p'$-degree of $\Galg^F$ and $\Balg^F$.

% We had to
% solve several problems which do not show up in the case where the
% Schur multiplier is trivial: we had to consider linear characters of 
% the center of $\Galg^F$, we had to find methods for dealing with
% the diagonal automorphisms and we had to treat the cohomology
% condition occurring in the definition of ``good''.
Some of our results on equivariant bijections are
also true in the more general context where the group of diagonal
automorphisms has prime order (possibly~$>2$). This might be helpful
in proving that the remaining simple groups of Lie type are ``good''.

We point out that we prove Theorem~\ref{main} using general methods
(essentially the theory of Deligne-Lusztig and the theory of
Gelfand-Graev characters for connected reductive groups with
disconnected center). Theorem~\ref{main} applies to finite
simple groups of Lie type $B_m$, $C_m$ and $E_7$ in the defining
characteristic. So we get as consequence:
\begin{corollary}
\label{cormain}
Let $p$ be a prime and $m,\,n$ positive integers. The following
simple groups are ``good'' for the prime $p$:
\begin{itemize}
\item[(a)] $\Orth_{2m+1}(p^n)$ if $p>2$ and $m \ge 2$,
\item[(b)] $\PSp_{2m}(p^n)$ if $p>2$ and $m \ge 2$,
\item[(c)] $E_7(p^n)$ if $p>3$.
\end{itemize}
\end{corollary}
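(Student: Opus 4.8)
The strategy is to verify, case by case, that each of the listed families of simple groups satisfies all five hypotheses of Theorem~\ref{main}, and then invoke that theorem directly. So the plan is not to reprove anything, but rather to check the list of conditions. Let me sketch how I would carry this out for each family. For part (a), take $\Galg=\operatorname{Spin}_{2m+1}$ over $\F_{p^n}$; for part (b), take $\Galg=\operatorname{Sp}_{2m}$ over $\F_{p^n}$; for part (c), take $\Galg$ of type $E_7$ simply-connected over $\F_{p^n}$. In each case $\Galg$ is simple and simply-connected, and I would choose the standard split Frobenius $F$ so that $F$ acts trivially on the Weyl group $W$ (this is where the untwisted assumption matters — these are the Chevalley groups, not the twisted forms).

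Next I would check the arithmetic conditions. The center $\Zalg$ has order $2$ in all three cases: for $B_m$ (spin groups) and $C_m$ (symplectic groups) this is classical, and the Schur multiplier of $E_7(p^n)$ is cyclic of order $\gcd(2,p^n-1)$, which equals $2$ precisely when $p$ is odd; for $B_m$ and $C_m$ the center is nontrivial only when $p>2$, which is exactly the stated hypothesis, so $|\Zalg|=2$ there too. Simplicity of $X=\Galg^F/\operatorname{Z}(\Galg^F)$ holds with the stated restrictions ($m\ge 2$ rules out the small degenerate cases $B_1\cong C_1\cong A_1$ where $\operatorname{PSp}_2$ and $\operatorname{P\Omega}_3$ reduce to $\operatorname{PSL}_2$, which is handled separately, and also avoids the exceptional isomorphism $B_2\cong C_2$ causing no real problem, and the non-simple cases). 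That $\Galg^F$ is the universal cover of $X$ requires that the Schur multiplier of $X$ is exactly $\operatorname{Z}(\Galg^F)$, which by Steinberg's theorem holds for these groups away from the short list of exceptional Schur multipliers — one checks that $B_m(p^n)$, $C_m(p^n)$ and $E_7(p^n)$ with the stated bounds are not on that list.

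Finally, the hypothesis that $p$ is good for $\Galg$: for type $B_m$ and $C_m$ the bad primes are only $2$, so $p>2$ suffices; for type $E_7$ the bad primes are $2$ and $3$, so $p>3$ is exactly what is needed — this is the reason for the sharper bound in part (c). The main point to be careful about is the interplay between the center having order $2$ and $p$ being odd: for the classical groups the two conditions coincide, but one must make sure the small-rank exceptions and exceptional Schur multipliers (e.g.\ $\operatorname{PSp}_6(2)$, $\operatorname{P\Omega}_7(3)$, $E_7(2)$) are excluded by the hypotheses $m\ge 2$ and $p>2$, respectively $p>3$. Once every bullet of Theorem~\ref{main} is confirmed, the corollary follows immediately.
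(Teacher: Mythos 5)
Your overall route is exactly the paper's: Corollary~\ref{cormain} is deduced by checking the five hypotheses of Theorem~\ref{main} for the simply-connected groups of types $B_m$, $C_m$ and $E_7$ over $\F_{p^n}$ with a split Frobenius, and most of your verification is correct (center of order $2$ precisely because $p$ is odd, good primes $p>2$ for $B_m,C_m$ and $p>3$ for $E_7$, trivial action of $F$ on $W$ for the untwisted forms, simplicity of $X$ for $m\ge 2$).

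There is, however, one concrete error in your universal-cover check. You assert that the exceptional Schur multiplier cases, in particular $\Orth_7(3)$, are ``excluded by the hypotheses $m\ge 2$ and $p>2$''. They are not: $\Orth_7(3)$ is the case $p=3$, $n=1$, $m=3$ of part (a), so it satisfies $p>2$ and $m\ge 2$ and is one of the groups the corollary claims to be good. Its Schur multiplier is exceptional of order $6$ (there is a triple cover $3\cdot\Orth_7(3)$), so for this group $\Galg^F=\operatorname{Spin}_7(3)$ is \emph{not} the universal cover of $X$, the second hypothesis of Theorem~\ref{main} fails, and the theorem simply does not apply. The corollary is nevertheless true for $\Orth_7(3)$, but to close the gap you must handle this single group separately by invoking Malle's result that simple groups of Lie type with exceptional Schur multiplier are good for all primes \cite{mallenotlie}, as recalled in the introduction of the paper. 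All remaining groups in the stated ranges ($B_m(p^n)$ and $C_m(p^n)$ with $p>2$, $m\ge 2$, and $E_7(p^n)$ with $p>3$) have generic Schur multiplier of order $2$, so for them your case-by-case application of Theorem~\ref{main} goes through as written.
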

% A crucial step in the proof of Theorem~\ref{main} is the construction
% of $\mathcal{A}$-equivariant bijections between certain subsets of 
% $\Irr_{p'}(\Galg^F)$ and $\Irr_{p'}(\Balg^F)$, where $\Balg$ is an
% $F$-stable Borel subgroup of $\Galg^F$ and $\mathcal{A}$ is a group of
% outer automorphisms of $\Galg^F$ stabilizing $\Balg^F$. We had to
% solve several problems which do not show up in the case where the
% Schur multiplier is trivial: we had to consider linear characters of 
% the center of $\Galg^F$, we had to find methods for dealing with
% the diagonal automorphisms and we had to treat the cohomology
% condition occurring in the definition of ``good''.
% Some of our results on equivariant bijections are also true in the
% more general context where the group of diagonal automorphisms has
% prime order (possibly~$>2$). This might be helpful in proving that the
% remaining simple groups of Lie type are ``good''.
Independently of this work, J.~Maslowski obtains in his PhD thesis
partial results on the inductive McKay condition for classical groups
in defining characteristic. Note that his approach relies on the
natural matrix representations of these groups and is completely
different from ours; see \cite{Maslowski}. 

This paper is organized as follows. In Section~\ref{nota}, we introduce
the notation and general setup.  In Sections~\ref{partie1} and
\ref{partborel}, we describe a parametrization of the sets of
irreducible characters of $p'$-degree of $\Galg^F$ and $\Balg^F$,
respectively, and study the action of field and diagonal automorphisms
on these sets of characters.  Section~\ref{partie2} is at the heart of
this paper. We prove the existence of bijections between the sets of
irreducible characters of $p'$-degree of $\Galg^F$ and $\Balg^F$ which
are equivariant with respect to field and diagonal automorphisms, and
which are compatible with the central characters of $\Zalg^F$. In
Section~\ref{indcond}, we prove our main result. % Theorem~\ref{main}. 
A large part of this section is devoted to the proof of the cohomology
condition.

%%%%%%%%%%%%%%%%%%%%%%%%%%%%%%%%%%%%%%%%%%%%%%%%%%%%%%%%%%%%%%%%%%%%%%%%%%%%%%%%

\section{Notation and Setup}
\label{nota}

In this section, we introduce the general setup and notation which
will be used throughout this paper.

\subsection{Group theoretical setup}
\label{grpsetup}
Le  $\Galg$ be a connected reductive group defined over a finite
field~$\F_q$ of characteristic $p>0$ with $q$ elements and
corresponding Frobenius map $F:\Galg\rightarrow\Galg$. 
% We suppose that
% the characteristic $p > 0$ of $\F_q$ is a good prime for $\Galg$
% \cite[p.\,125]{DM}. 
We do not assume that the center $\Zalg$ of $\Galg$ is connected. As a
general reference for the representation theory of finite groups of
Lie type with disconnected center, we refer to \cite{BonnafeSLn}.

The algebraic group $\Galg$ can be embedded in a connected reductive
group $\widetilde{\Galg}$ with an $\F_q$-rational structure obtained by
extending $F$ to $\widetilde{\Galg}$, such that the center
of~$\widetilde{\Galg}$ is connected and the groups $\widetilde{\Galg}$
and $\Galg$ have the same derived subgroup (see for
example \cite[p.~139]{DM} for this construction). Let $\Talg$ be an
$F$-stable maximal torus of~$\Galg$ contained in an $F$-stable Borel
subgroup $\Balg$ of~$\Galg$ and write~$\widetilde{\Talg}$ for the
unique $F$-stable torus of $\widetilde{\Galg}$ containing $\Talg$. 
Fix an $F$-stable Borel subgroup $\widetilde{\Balg}$ of
$\widetilde{\Galg}$ containing~$\widetilde{\Talg}$ and~$\Balg$.
Write $\Ualg$ for the unipotent radical of $\Balg$ and let
$\Delta=\{\alpha_i\,|\,i\in I\}$ be the set of simple roots defined by
$\Balg$ and $\Talg$, and let $\Phi$ be the root system of $\Galg$ relative
to~$\Talg$. We write $\Phi^+$ for the set of positive roots defined by
$\Balg$.
Note that $\Ualg$ is the unipotent radical of
$\widetilde{\Balg}$. Furthermore, $\Phi$ can be identified with the
root system of $\widetilde{\Galg}$ relative to $\widetilde{\Talg}$, and
$\Delta$ can be identified with the set of simple roots of
$\widetilde{\Galg}$ defined by $\widetilde{\Talg}$ and~$\widetilde{\Balg}$.
Let $W \simeq \operatorname{N}_{\Galg}(\Talg)/\Talg$ be the Weyl group
of $\Galg$. So, $W$~is isomorphic to the Weyl group of $\widetilde{\Galg}$.
Note that, since $\Talg$ and $\widetilde{\Talg}$ are $F$-stable, 
$F$~induces an automorphism of $W$.
Throughout this paper, we assume that $F$ acts trivially on~$W$.

For $\alpha\in\Phi$, we write $\Xalg_{\alpha}$ for
the unipotent subgroup of $\Galg$ corresponding to $\alpha$. That is,
$\Xalg_{\alpha}$ is the minimal non-trivial closed unipotent subgroup
of $\Galg$ normalized by $\Talg$, such that $\Talg$ acts on
$\Xalg_{\alpha}$ by~$\alpha$, where $\alpha$ is viewed as a character of
$\Talg$.
 Recall that
$\Xalg_{\alpha}$ and $(\overline{\F}_p,+)$ are isomorphic as algebraic
groups. Fix an automorphism 
$x_{\alpha}:\overline{\F}_p\rightarrow\Xalg_{\alpha}$ for every
$\alpha\in\Phi$. Since $F$ acts trivially on $W$, we can choose 
it in such a way that for every $t\in\overline{\F}_p$, we have
$^Fx_{\alpha}(t)=x_{\alpha}(t^q)$. In particular, $\Xalg_{\alpha}$ is
$F$-stable.
We have $\Ualg=\prod\limits_{\alpha\in\Phi^+}\Xalg_{\alpha}$.
Put 
\begin{equation}\label{eq:etoile}
\Ualg_0=\prod\limits_{\alpha\in\Phi^+,\
\alpha\not\in\Delta}\Xalg_{\alpha}.
\end{equation}
Then $\Ualg_0$ is a normal
subgroup of $\Ualg$ and the quotient $\Ualg_1=\Ualg/\Ualg_0$ is abelian.
Note that $\Ualg_0$ is the derived subgroup of $\Ualg$;
see~\cite[14.17]{DM}. 
Moreover, there is an $F$- and $\Talg$-equivariant isomorphism of
algebraic varieties 
\begin{equation}\label{eq:etoilequo}
\Ualg_1\simeq \prod\limits_{\alpha\in\Delta}\Xalg_{\alpha},
\end{equation}
and in the following, we will identify the left and right hand side of
(\ref{eq:etoilequo}) via this isomorphism. Note that the groups
$\Ualg$ and $\Ualg_0$ are $F$-rational and
\begin{equation}\label{eq:etoileFrob}
\Ualg_1^F=\prod_{\alpha\in\Delta}\Xalg_{\alpha}^{F}.
\end{equation}

\subsection{Character theoretical notation}\label{charnota}

For a finite group $H$, we write $\Irr(H)$ for the set of complex
irreducible characters of $H$ and ${\cyc{\cdot, \cdot }_H}$ for the
usual scalar product on the space of class functions.
If $\zeta$ is an irreducible character of a normal subgroup $N$ of
$H$, we define $\Irr(H|\zeta) := \{\chi \in \Irr(H) \,| \,
{\cyc{\Res^H_N(\chi), \zeta}_N} \neq 0\}$. Note that, if $N$ is
central in $H$, then $\chi \in \Irr(H|\zeta)$ if and only if
$\Res^H_N(\chi)$ is a multiple of~$\zeta$. 

Furthermore, let
$\Irr_{p'}(H)$ be the set of all $\chi \in \Irr(H)$ such that
$\chi(1)$ is not divisible by $p$, and similarly, 
$\Irr_{p'}(H|\zeta)$ the set of all $\chi \in \Irr(H|\zeta)$ such that  
$\chi(1)$ is not divisible by $p$.

\subsection{Semisimple and regular characters}\label{regsemi}

Let $(\Galg^*,F^*)$ be a pair dual to $(\Galg,F)$ and let
$(\widetilde{\Galg}^*,F^*)$ be a pair dual to $(\widetilde{\Galg},F)$
in the sense of~\cite[Section 4.3]{Carter2}. The natural embedding
$i:\Galg\rightarrow\widetilde{\Galg}$ induces a surjective homomorphism
$i^*:\widetilde{\Galg}^*\rightarrow\Galg^*$ commuting with $F^*$,
which induces a surjective homomorphism 
$i^*:\widetilde{\Galg}^{*F^*}\rightarrow\Galg^{*F^*}$,
see \cite[2.D, 2.F]{BonnafeSLn}. 
Let $\Talg^*$ be an $F^*$-stable maximal torus of $\Galg^*$ in duality
with~$\Talg$. 
Note that there is a natural anti-isomorphism between $W$ and the Weyl
group of $\Galg^*$ relative to $\Talg^*$, and we will identify the sets
of elements of these two Weyl groups via this anti-isomorphism.  For
$w\in W$, we write $\Talg_w$ for a $F$-stable maximal torus of $\Galg$
obtained from $\Talg$ by twisting with $w$. Write $\Talg_w^*$ for an
maximal torus in duality with $\Talg_w$.  Recall that, for $w\in W$,
there is an isomorphism $\Talg_w^{*F^*}\rightarrow\Irr(\Talg_w^F)$.

For $s\in\Talg_w^{*F^*}$, we can define the corresponding 
Deligne-Lusztig character $R_{\Talg_w}^{\Galg}(s)$ as follows.
Using the above isomorphism, we associate to $s\in\Talg_w^{*F^*}$ the linear 
character $\theta_s\in\Irr(\Talg_w^F)$ and put
$R_{\Talg_w}^{\Galg}(s)=R_{\Talg_w}^{\Galg}(\theta_s)$, where
$R_{\Talg_w}^{\Galg}(\theta_s)$ is the Deligne-Lusztig character
corresponding to $\theta_s\in\Irr(\Talg_w^F)$. For more details on 
the construction and properties of Deligne-Lusztig characters, we
refer to~\cite[Section 7]{Carter2} or~\cite{DM}.

For a semisimple element $s  \in \Galg^{*F^*}$, let
$W^\circ(s) \subseteq W$ be the Weyl group
of~$C_{\Galg^*}^\circ(s)$. We define  
\begin{eqnarray}
\rho_{s}&=&\displaystyle{\frac{
%\varepsilon_{\Galg} 
\label{eq:rhos}
%\varepsilon_{\Cen_{{\Galg}^*}^\circ(s)}
1}{|W^\circ({s})|}
\sum_{w\in W^\circ(s)}R_{{\Talg}_w}^{{\Galg}}
({s}),}\\
\chi_{{s}}&=&\displaystyle{\frac{\varepsilon_{{\Galg}} \label{eq:chis}
\varepsilon_{\Cen_{{\Galg}^*}^\circ(s)}}{|W^\circ({s})|}
\sum_{w\in W^\circ(s)}\varepsilon(w)R_{{\Talg}_w}^{
{\Galg}}({s}),}
\end{eqnarray}
where $\varepsilon$ is the sign character of $W$ and
$\varepsilon_{{\Galg}}=(-1)^{\rk_{\F_q}({\Galg
})}$. Here $\rk_{\F_q}({\Galg})$ is the $\F_q$-rank of
${\Galg}$; see~\cite[8.3]{DM}. Note that $\chi_s$ and $\rho_s$ only 
depend on the semisimple class of $s$ in $\Galg^{*F^*}$ and
that the class functions $\rho_s$ and $\chi_s$ are multiplicity free 
characters of $\Galg^F$, see~\cite[15.11]{BonnafeSLn}.
Moreover, if $\widetilde{s}$ denotes a semisimple element of
$\widetilde{\Galg}^F$ such that $i^*(\widetilde{s})=s$, then we have
$$\chi_s=\Res_{\Galg^F}^{\widetilde{\Galg}^F}(\chi_{\widetilde{s}})
\quad\textrm{and}
\quad
\rho_s=\Res_{\Galg^F}^{\widetilde{\Galg}^F}(\rho_{\widetilde{s}}).$$
The irreducible
constituents of $\rho_s$ and $\chi_s$ are the so-called semisimple 
and the regular characters of $\Galg^F$, respectively.
% and the irreducible
%constituents of $\chi_s$ the regular characters of $\Galg^F$.

To obtain a better understanding of these characters, we now describe the 
Gelfand-Graev characters of $\Galg^F$. 
Fix $\phi_0 \in \Irr(\Ualg_1^F)$ such that
$\phi_0|_{\Xalg_{\alpha}^F}$ is non-trivial for all 
$\alpha \in \Delta$. %Note that with
%the notation of~\S\ref{secstable}, we have
%$\phi_0\in\widetilde{\omega}_{\Delta}$. 
The corresponding linear
character of~$\Ualg^F$, obtained by inflation and also denoted by
$\phi_0$ in the following, is called regular.  
As explained in~\cite[14.28]{DM}, the set of $\Talg^F$-orbits
of regular characters of $\Ualg^F$ is in bijection with
$H^1(F,\Zalg)$ as follows.
For $z\in H^1(F,\Zalg)$, we choose $t_z\in\Talg$
such that $t_z^{-1}F(t_z)\in z$. Then the regular character
$\phi_z={^{t_z}\!\phi_0}$ of $\Ualg^F$ is a representative for the
corresponding $\Talg^F$-orbit. 
For $z\in H^1(F,\Zalg)$, we define the corresponding
Gelfand-Graev character
of $\Galg^F$ by $$\Gamma_z=\Ind_{\Ualg^F}^{\Galg^F}(\phi_z).$$
Thanks to~\cite[14.49]{DM}, the constituents of $\Gamma_z$ (for any
$z\in H^1(F,\Zalg)$)
%and $D_{\Galg}(\Gamma_z)$ 
are the regular characters 
%and the semisimple characters 
of $\Galg^F$.
%respectively.
%  Now, we describe the connections with regular characters of
%  $\widetilde{\Galg}^F$. 
%  Since the center of $\widetilde{\Galg}$ is connected,
%  $\widetilde{\Galg}^F$ has a unique Gelfand-Graev character
%  $\Gamma_{\widetilde{\Galg}^F}$.
%  Moreover, for every semisimple element
%  $\widetilde{s}\in\widetilde{\Galg}^{*F^*}$, the character
%  $\chi_{\widetilde{s}}$ % and $\rho_{\widetilde{s}}$ 
%  is irreducible and we have (see~\cite[14.47]{DM})
%  $$\Gamma_{\widetilde{\Galg}^F}=\sum_{\widetilde{s}\in\widetilde{\mathcal{S}}}
%  \chi_{\widetilde{s}},$$
%  where $\widetilde{\cal S}$ denotes a set of representatives for the
%  semisimple classes of $\widetilde{\Galg}^{*F^*}$.
%  Let $s$ be a semisimple element of $\Galg^{*F^*}$. Then there is a
%  semisimple element $\widetilde{s}\in\widetilde{\Galg}^{*F^*}$ 
%  such that $i^*(\widetilde{s})=s$ and
%  $$\chi_s=\Res_{\Galg^F}^{\widetilde{\Galg}^F}(\chi_{\widetilde{s}}).$$
%\quad\textrm{and}
%\quad
%\rho_s=\Res_{\Galg^F}^{\widetilde{\Galg}^F}(\rho_{\widetilde{s}}).$$
More precisely, %$\rho_s$ and 
%$\chi_s$ is multiplicity free (see~\cite[15.11]{BonnafeSLn}) and 
%Moreover, recall that 
for every $z\in H^1(F,\Zalg)$,
the multiplicity free~characters $\chi_s$ and $\Gamma_z$ have exactly one irreducible
constituent in common, denoted by $\chi_{s,z}$, such that
$$\Gamma_z=\sum_{s\in\cal S}\,\chi_{s,z},$$
where $\cal S$ is a set of representatives for the semisimple
conjugacy classes of $\Galg^{*F^*}$; see \cite[14.49]{DM}.
Let $D_{\Galg}$ be the Alvis-Curtis duality
functor~\cite[8.8]{DM}, defined 
for $g\in\Galg^F$ and $\chi\in\Z\Irr(\Galg^F)$ by
\begin{equation}\label{eq:dualite}
D_{\Galg}(\chi)(g)=\sum_{\Palg\supseteq\Balg}(-1)^{r(\Palg)}
R_{\Lalg}^{\Galg}\circ ^*\!\!R_{\Lalg}^{\Galg}(\chi)(g),
\end{equation}
where the summation is over the set of $F$-stable parabolic subgroups of
$\Galg$ containing $\Balg$ and where $\Lalg$ is an $F$-stable Levi
complement of $\Palg$, $r(\Palg)$ is the semisimple $\F_q$-rank of
$\Palg$, $R_{\Lalg}^{\Galg}$ denotes the Harish-Chandra induction and
$^*R_{\Lalg}^{\Galg}$ the Harish-Chandra restriction (i.e., the
adjoint functor of $R_{\Lalg}^{\Galg}$); see~\cite[4.6]{DM}. 
% Now, we describe the connections with semisimple characters of
% $\widetilde{\Galg}^F$. 
% Since the center of $\widetilde{\Galg}$ is connected,
% $\widetilde{\Galg}^F$ has a unique Gelfand-Graev character
% $\Gamma_{\widetilde{\Galg}^F}$.
% Moreover, for any semisimple
% $\widetilde{s}\in\widetilde{\Galg}^{*F^*}$, the characters
% $\chi_{\widetilde{s}}$ and $\rho_{\widetilde{s}}$ are irreducible, and
% one has
% $$\Gamma_{\widetilde{\Galg}^F}=\sum_{\widetilde{s}\in\widetilde{\mathcal{S}}}
% \chi_{\widetilde{s}},$$
% where $\widetilde{\cal S}$ denotes a set of representatives for the
% semisimple classes of $\widetilde{\Galg}^{*F^*}$.
%Note that $\rho_{\widetilde{s}}=\varepsilon_{\Galg}\varepsilon_{\Cen_{{\Galg}^*}^\circ(s)} D_{\Galg}(\chi_{\widetilde{s}})$.
For every semisimple element $s\in\Galg^{*F}$ and $z\in H^1(F,\Zalg)$, define
$$\rho_{s,z}=\varepsilon_{\Galg}\varepsilon_{\Cen_{{\Galg}^*}^\circ(s)}
D_{\Galg}(\chi_{s,z}).$$
Note that the characters $\rho_{s,z}$ are the semisimple characters of
$\Galg^F$ and $\{\rho_{s,z}\,|\,z\in H^1(F,\Zalg)\}$ is the set of
constituents of $\rho_s$.
% Let $s$ be a semisimple element of $\Galg^{*F^*}$. Then there is a
% semisimple element $\widetilde{s}\in\widetilde{\Galg}^{*F^*}$ 
% such that $i^*(\widetilde{s})=s$ and
% $$\chi_s=\Res_{\Galg^F}^{\widetilde{\Galg}^F}(\chi_{\widetilde{s}})\quad\textrm{and}
% \quad
% \rho_s=\Res_{\Galg^F}^{\widetilde{\Galg}^F}(\rho_{\widetilde{s}}).$$
% Furthermore, $\rho_s$ and $\chi_s$ are multiplicity free;
% see~\cite[15.11]{BonnafeSLn}. 
% Moreover, recall that for every $z\in H^1(F,\Zalg)$,
% the characters $\chi_s$ and $\Gamma_z$ have exactly one irreducible
% constituent in common, denoted by $\chi_{s,z}$, such that
% $$\Gamma_z=\sum_{s\in\cal S}\,\chi_{s,z},$$
% where $\cal S$ is a set of representatives for the semisimple
% conjugacy classes of $\Galg^{*F^*}$; see \cite[14.49]{DM}.
Moreover, we have
% $\rho_{s,z}$ is the unique common constituents of $\rho_s$ and 
%for every $z\in H^1(F,\Zalg)$,
%the character $\rho_s$ and the generalized character 
$$D_{\Galg}(\Gamma_z)=\sum_{s\in\cal
S}\varepsilon_{\Galg}\varepsilon_{\Cen_{{\Galg}^*}^\circ(s)}\rho_{s,z}.$$
% have exactly one irreducible constituent in
%common, denoted by~$\rho_{s,z}$, and the irreducible characters
%$\rho_{s,z}$, $s \in \cal S$, are the only irreducible constituents of
%$D_{\Galg}(\Gamma_z)$. 

Let $\widetilde{\Talg}$ be the maximal $F$-stable torus of
$\widetilde{\Galg}$ containing $\Talg$. Then the group
$\widetilde{\Talg}^F$ acts on $\Galg^F$ by conjugation. Note that the
induced outer automorphisms of $\Galg^F$ obtained in this
way are the diagonal automorphisms of $\Galg^F$. The
group generated by the diagonal automorphisms of $\Galg^F$ will be denoted 
by $D$ in the following. Write $\widetilde{\Galg}^F(s)$ for the
inertia subgroup of $\rho_{s,1}$ in $\widetilde{\Galg}^F$ and 
$A_{\Galg^{*}}(s):=\Cen_{\Galg^*}(s)/\Cen_{\Galg^*}(s)^{\circ}.$
Then $|\widetilde{\Galg}^F/\widetilde{\Galg}^F(s)| = 
|A_{\Galg^*}(s)^{F^*}|$, see \cite[11.E, 15.13]{BonnafeSLn}. In
particular, the character~$\rho_s$ has $|A_{\Galg^*}(s)^{F^*}|$
irreducible constituents and they form a single
$\widetilde{\Talg}^F$-orbit. 

\subsection{The norm map} \label{normmap}

Let $\Halg$ be an abelian algebraic group defined over $\F_q$ with
corresponding Frobenius map $F'$. For any positive integer $m$ 
we define
\begin{equation}
N_{F'^m/F'}:\Halg^{F'^m}\rightarrow\Halg^{F'},\,h\mapsto hF'(h)\cdots
F'^{m-1}(h).
\label{eq:norm}
\end{equation}
For a class function $\theta$ of $\Halg^{F'}$, we set 
$N_{F'^m/F'}^*(\theta):=\theta\circ N_{F'^m/F'}$. So, for
$\theta\in\Irr(\Halg^{F'})$, the map $N_{F'^m/F'}^*(\theta)$ is an
$F'$-stable irreducible character of $\Halg^{F'^m}$.
\begin{lemma}\label{surjnorm}
If $N_{F'^m/F'}$ is surjective, then 
$$\Irr_{F'}(\Halg^{F'^m})=\{N_{F'^m/F'}^*(\theta)\ |\
\theta\in\Irr(\Halg^{F'})\},$$
where $\Irr_{F'}(\Halg^{F'^m})$ is the set of $F'$-stable linear characters
of $\Halg^{F'^m}$. 
Moreover, for all generalized characters $\theta,\,\theta'\in\Z\Irr(\Halg^{F'})$, one has
$$
\cyc{N_{F'^m/F'}^*(\theta),N_{F'^m/F'}^*(\theta')}_{\Halg^{F'^m}}=\cyc{\theta,\theta'}_{\Halg^{F'}}.
$$
\end{lemma}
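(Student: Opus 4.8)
The plan is to reduce everything to elementary facts about finite abelian groups and their character groups. Write $A := \Halg^{F'^m}$ and $B := \Halg^{F'}$; since $\Halg$ is abelian, these are finite abelian groups with $B\le A$ (a point of $\Halg$ fixed by $F'$ is automatically fixed by $F'^m$), and every irreducible character of either one is linear. The key structural observation is that the norm map $N := N_{F'^m/F'}\colon A\to B$ is a group homomorphism: the factors $h,{}^{F'}h,\dots,{}^{F'^{m-1}}h$ occurring in its definition commute because $\Halg$ is abelian. Hence the pullback $N^* = N^*_{F'^m/F'}$ is a homomorphism of character groups $\Irr(B)\to\Irr(A)$, and, under the hypothesis $\imm N = B$, it is injective.

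For the first assertion, the inclusion $\{N^*(\theta)\mid\theta\in\Irr(B)\}\subseteq\Irr_{F'}(A)$ is already noted above, so the task is the reverse inclusion, which I would obtain by pinning down $\ker N$. Consider the homomorphism $\lambda\colon A\to A$, $h\mapsto{}^{F'}h\cdot h^{-1}$; its kernel is $A^{F'}=B$, so $|\imm\lambda| = |A|/|B|$. A one-line computation using ${}^{F'^m}h = h$ and commutativity gives $N({}^{F'}h)=N(h)$, hence $N\circ\lambda$ is trivial and $\imm\lambda\subseteq\ker N$. Now the surjectivity of $N$ enters: $|\ker N| = |A|/|\imm N| = |A|/|B| = |\imm\lambda|$, forcing $\ker N=\imm\lambda$. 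Since any $\psi\in\Irr_{F'}(A)$ is $F'$-stable it is trivial on $\imm\lambda = \ker N$, so it factors through $N$, i.e. $\psi = \theta\circ N = N^*(\theta)$ for a (unique) $\theta\in\Irr(B)$. This gives the asserted equality.

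For the scalar-product identity I would use that, $N$ being a surjective homomorphism, every fibre $N^{-1}(b)$, $b\in B$, has exactly $|\ker N| = |A|/|B|$ elements. Then for generalized characters — indeed arbitrary class functions — $\theta,\theta'$ of $B$,
$$\cyc{N^*\theta,\,N^*\theta'}_A = \frac{1}{|A|}\sum_{h\in A}\theta(N(h))\overline{\theta'(N(h))} = \frac{1}{|A|}\cdot\frac{|A|}{|B|}\sum_{b\in B}\theta(b)\overline{\theta'(b)} = \cyc{\theta,\theta'}_B .$$

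The only genuinely non-formal point is the equality $\ker N = \imm\lambda$, equivalently the statement that every $F'$-stable character of $A$ factors through the norm. Both the inclusion $\imm\lambda\subseteq\ker N$ and the order count are elementary; it is precisely the surjectivity hypothesis that makes the two sides coincide. Without it $\ker N$ is strictly larger than $\imm\lambda$, and indeed both assertions of the lemma fail (the character description is no longer onto, and the scalar-product computation collapses to a proper subsum over $\imm N\subsetneq B$). Everything else is routine bookkeeping with finite abelian groups.
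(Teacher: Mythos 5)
Your argument is correct, and for the first assertion it takes a genuinely different route from the paper. The paper gets the equality by a counting argument: by \cite[6.32]{isaacs} (Brauer's permutation lemma for the cyclic group $\cyc{F'}$ acting on the abelian group $\Halg^{F'^m}$) the number of $F'$-stable linear characters equals $|\Halg^{F'}|$, while surjectivity of $N_{F'^m/F'}$ makes $N_{F'^m/F'}^*$ injective, so its image, which visibly consists of $F'$-stable characters, must be all of $\Irr_{F'}(\Halg^{F'^m})$. You instead identify the kernel of the norm explicitly: with $\lambda(h)=F'(h)h^{-1}$ you show $\imm\lambda\subseteq\Ker N_{F'^m/F'}$, and surjectivity forces equality by comparing orders ($|A|/|B|$ on both sides); then any $F'$-stable linear character kills $\imm\lambda=\Ker N_{F'^m/F'}$ and hence factors through the norm. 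Your version is more self-contained (no appeal to the fixed-character count of \cite[6.32]{isaacs}) and yields the extra structural fact $\Ker N_{F'^m/F'}=\{F'(h)h^{-1}\mid h\in\Halg^{F'^m}\}$, at the cost of a slightly longer argument; the paper's version is shorter modulo the citation. Your treatment of the scalar-product identity is the same fibre-counting computation as in the paper.
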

\begin{proof}
By~\cite[6.32]{isaacs}, one has $|\Irr_{F'}(\Halg^{F'^m})|=|\Halg^{F'}|$
and, if $N_{F'^m/F'}$ is surjective, then the map
$N_{F'^m/F'}^*: \Irr(\Halg^{F'})\rightarrow\Irr(\Halg^{F'^m}),\,\theta\mapsto\theta\circ
N_{F'^m/F'}$ is injective. Since
$N_{F'^m/F'}^*(\theta)\in\Irr_{F'}(\Halg^{F'^m})$, the first equality
follows.
Now, let $\theta,\,\theta'\in\Z\Irr(\Halg^{F'})$. Then
\begin{eqnarray*}
\cyc{N_{F'^m/F'}^*(\theta),N_{F'^m/F'}^*(\theta')}_{\Halg^{F'}}&=&
\frac{1}{|\Halg^{F'^m}|}\sum_{h\in \Halg^{F'^m}}\hspace{-0.1cm}\theta\circ
N_{F'^m/F'}(h)\,
\overline{\theta'\circ N_{F'^m/F'}(h)}\\
&=&
\frac{1}{|\Halg^{F'^m}|}
\sum_{k\in\Halg^{F'}}\sum_{h\in N_{F'^m/F'}^{-1}(k)}\theta(k)\,
\overline{\theta'(k)}\\
&=&
\frac{1}{|\Halg^{F'^m}|}
\sum_{k\in\Halg^{F'}}|N_{F'^m/F'}^{-1}(k)|\theta(k)\,
\overline{\theta'(k)}\\
&=&
\frac{1}{|\Halg^{F'^m}|}
\sum_{k\in\Halg^{F'}}\frac{|\Halg^{F'^m}|}{|\Halg^{F'}|}\theta(k)\,
\overline{\theta'(k)}\\
&=&
\frac{1}{|\Halg^{F'}|}
\sum_{k\in\Halg^{F'}}\theta(k)\,
\overline{\theta'(k)}\\
&=&\cyc{\theta,\theta'}_{\Halg^{F'}}.
\end{eqnarray*}
This yields the claim.
\end{proof}
Note that if $\Halg$ is connected, then $N_{F'^m/F'}$ is surjective.
Indeed, if $y\in\Halg^{F'}$, then the Lang-Steinberg theorem implies that
there is $x\in\Halg$ with $y=x^{-1}F'^m(x)$. Then the element
$x^{-1}F'(x)$ lies in $\Halg^{F'^m}$ and $N_{F'^m/F'}(x^{-1}F'(x))=y$.

\subsection{Semisimple characters and central characters} \label{semicentral}

As in Subsection~\ref{grpsetup}, let~$\Galg$ be a connected reductive
group defined over $\F_q$ (with Frobenius map $F$) and let
$(\Galg^*,F^*)$ be a pair dual to $(\Galg,F)$ as above. Note that for
every positive integer~$m$, the map $F^m$ is a Frobenius map on
$\Galg$ defining a rational structure over~$\F_{q^m}$, and
$(\Galg^*,F^{*m})$ is in duality with $(\Galg,F^m)$.  Moreover, if $s$
is an $F^*$-stable semisimple element of~$\Galg^*$ contained in an
$F^*$-stable maximal torus $\Talg^*$ of $\Galg^*$ and if
$(\Talg,\theta)$ is a pair in duality with $(\Talg^*,s)$, then
$(\Talg, N_{F^m/F}^*(\theta))$ is in duality with $(\Talg^*,s)$ with
respect to the Frobenius map $F^m$.

\begin{lemma}\label{resZ}
Fix a positive integer $m$ and let $s \in \Galg^{*F^*}$ be a
semisimple element contained in the $F^*$-stable maximal torus
$\Talg^*$. Let $\rho_{s}$ (resp.~$\rho_{s}^{[m]}$) be the
corresponding sum of semisimple irreducible characters of~$\Galg^F$
(resp.~of~$\Galg^{F^m}$).  If
$N_{F^m/F}:\Zalg^{F^m}\rightarrow\Zalg^F$ is surjective, then one has
$$\Res^{\Galg^{F^m}}_{\Zalg^{F^m}}(\rho_{s}^{[m]})=\rho_s^{[m]}(1)
\cdot N_{F^m/F}^*(\nu),$$ where $\nu$ is a linear character of
$\Zalg^F$ such that $\Res_{\Zalg^F}^{\Galg^F}(\rho_s)=\rho_s(1)\cdot
\nu$.
\end{lemma}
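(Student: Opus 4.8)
The plan is to compute the restriction $\Res^{\Galg^{F^m}}_{\Zalg^{F^m}}(\rho_s^{[m]})$ by unwinding the definition of $\rho_s^{[m]}$ as an average of Deligne--Lusztig characters and using the well-known behaviour of $R_{\Talg_w}^{\Galg}$ on central elements. Recall from~\eqref{eq:rhos} that
$$\rho_s^{[m]}=\frac{1}{|W^\circ(s)|}\sum_{w\in W^\circ(s)}R_{\Talg_w}^{\Galg}(s),$$
where the Deligne--Lusztig characters are now taken with respect to the Frobenius map $F^m$ (so $\Talg_w$ is an $F^m$-stable maximal torus and $s\in\Talg_w^{*F^{*m}}$, which makes sense by the remarks preceding the lemma, since $(\Talg, N_{F^m/F}^*(\theta))$ is in duality with $(\Talg^*,s)$ over $\F_{q^m}$). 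The key classical fact is that for a central element $z\in\Zalg^{F^m}$ and any $\theta\in\Irr(\Talg_w^{F^m})$ one has $R_{\Talg_w}^{\Galg}(\theta)(z)=\theta(z)\cdot R_{\Talg_w}^{\Galg}(\theta)(1)$, and moreover $\theta_s(z)$ is independent of $w$: under the identification $\Zalg\subseteq\Talg_w$ the linear character $\theta_s$ restricted to $\Zalg^{F^m}$ is exactly $N_{F^m/F}^*(\theta_s|_{\Zalg^F})$, hence depends only on $s$ and not on the twist $w$. Denote this common linear character of $\Zalg^{F^m}$ by $N_{F^m/F}^*(\mu)$, where $\mu$ is a linear character of $\Zalg^F$.

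Combining these observations, for $z\in\Zalg^{F^m}$:
$$\rho_s^{[m]}(z)=\frac{1}{|W^\circ(s)|}\sum_{w\in W^\circ(s)}\theta_s(z)\,R_{\Talg_w}^{\Galg}(s)(1)=N_{F^m/F}^*(\mu)(z)\cdot\rho_s^{[m]}(1),$$
so that $\Res^{\Galg^{F^m}}_{\Zalg^{F^m}}(\rho_s^{[m]})=\rho_s^{[m]}(1)\cdot N_{F^m/F}^*(\mu)$. Exactly the same computation with $m=1$ gives $\Res_{\Zalg^F}^{\Galg^F}(\rho_s)=\rho_s(1)\cdot\mu$, and since $N_{F^m/F}$ is surjective, Lemma~\ref{surjnorm} tells us that $N_{F^m/F}^*$ is injective, so $\mu$ is the \emph{unique} linear character $\nu$ of $\Zalg^F$ with $\Res_{\Zalg^F}^{\Galg^F}(\rho_s)=\rho_s(1)\cdot\nu$. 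This identifies $\mu=\nu$ and finishes the proof.

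The main obstacle is making the claim ``$\theta_s|_{\Zalg^{F^m}}$ is independent of $w$ and equals $N_{F^m/F}^*(\nu)$'' rigorous. One has to recall precisely how duality sets up the isomorphism $\Talg_w^{*F^{*m}}\to\Irr(\Talg_w^{F^m})$, and check the compatibility: the inclusion $\Zalg\hookrightarrow\Talg_w$ is obtained from $\Zalg\hookrightarrow\Talg$ by the same twisting element, and on the dual side $\Galg^*\twoheadrightarrow\Galg^*/\Zalg^*\cong$ (dual of $\Zalg$) is twist-independent; tracing this through shows the restriction of $\theta_s$ to $\Zalg^{F^m}$ is computed from $s$ alone via the norm map, which is the statement that $\theta_s|_{\Zalg^{F^m}}=N_{F^m/F}^*(\theta_s|_{\Zalg^F})$. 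This is essentially the content of~\cite[2.F, 11.E]{BonnafeSLn} applied over $\F_{q^m}$; alternatively, since $\rho_s^{[m]}$ lies in the Lusztig series associated to $s$ and the central character of a whole rational series is determined by the image of $s$ in $\Zalg^{*F^{*m}}$ (the dual of $\Zalg^{F^m}$), the norm-map compatibility of central characters follows from the functoriality of duality under $F\mapsto F^m$ established in Subsection~\ref{semicentral}. Once this point is settled, the rest is the short averaging argument above, and the surjectivity hypothesis is used only to invoke the injectivity of $N_{F^m/F}^*$ from Lemma~\ref{surjnorm} to pin down $\nu$ uniquely.
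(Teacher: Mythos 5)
Your argument is correct, but it takes a genuinely different route from the paper. The paper first gets the existence of a central character of $\rho_s^{[m]}$ by restricting $\rho_{\widetilde{s}}^{[m]}$ from $\widetilde{\Galg}^{F^m}$ (connected centre), identifies it with $\Res_{\Zalg^{F^m}}^{\Talg_w^{F^m}}(\theta_s^{[m]})$ via \cite[9.D]{BonnafeSLn}, and only then brings in the hypothesis: $F$-stability of this character together with surjectivity of $N_{F^m/F}$ and Lemma~\ref{surjnorm} produce some $\nu_0$ with $N_{F^m/F}^*(\nu_0)=\nu$, and $\nu_0$ is identified with $\Res_{\Zalg^F}^{\Talg_w^F}(\theta_s)$ by evaluating on elements $z_0=N_{F^m/F}(z)$, with a final appeal to \cite[9.D]{BonnafeSLn} to pass back to $\rho_s$. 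You instead evaluate $\rho_s^{[m]}$ directly on central elements using the standard identity $R_{\Talg_w}^{\Galg}(\theta)(z)=\theta(z)R_{\Talg_w}^{\Galg}(\theta)(1)$, and use the relation $\theta_s^{[m]}=N_{F^m/F}^*(\theta_s)$ on the $F$-stable torus $\Talg$ (the remarks of Subsection~\ref{semicentral}), whose restriction to $\Zalg^{F^m}$ is literally $N_{F^m/F}^*(\theta_s|_{\Zalg^F})$ because the torus norm restricts to the centre norm; the $m=1$ computation then identifies $\nu$. Two remarks: first, the $w$-independence of $\theta_s^{[m]}|_{\Zalg^{F^m}}$, which you rightly single out as the crux, is exactly \cite[9.D]{BonnafeSLn} (your citations 2.F and 11.E are not the relevant ones); second, your only use of the surjectivity hypothesis --- invoking injectivity of $N_{F^m/F}^*$ to ``pin down'' $\nu$ --- is superfluous, since $\nu$ is trivially unique ($\rho_s(1)\neq 0$), so your argument in fact proves the statement without that hypothesis, whereas the paper's bookkeeping genuinely uses it. What your route buys is a shorter, hypothesis-free proof by direct computation; what the paper's route buys is that it never needs the Deligne--Lusztig character formula on central elements, working instead entirely with restrictions of honest characters and the norm combinatorics.
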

\begin{proof}
We use the notation from Subsections~\ref{regsemi}
and \ref{normmap}. By the construction of~$\widetilde{\Galg}$, we have
$\Zalg = Z(\Galg) \subseteq Z(\widetilde{\Galg})$ and so
$\Zalg^{F^m} \subseteq Z(\widetilde{\Galg})^{F^m} =
Z(\widetilde{\Galg}^{F^m})$, see 
also \cite[6.2]{BonnafeSLn}. Thus, there is some 
$\nu \in \Irr(\Zalg^{F^m})$ such that
$$\Res_{\Zalg^{F^m}}^{\Galg^{F^m}}(\rho_s^{[m]})
=
\Res_{\Zalg^{F^m}}^{\widetilde{\Galg}^{F^m}}(\rho_{\widetilde{s}}^{[m]})
= \rho_{\widetilde{s}}^{[m]}(1) \cdot \nu
= \rho_s^{[m]}(1) \cdot \nu,$$
where $\widetilde{s}$ in an $F^*$-stable element of
$\widetilde{\Galg}$ satisfying $i^*(\widetilde{s})=s$.
For $w \in W^\circ(s)$, let $\theta_s^{[m]}$ be the linear character of
$\Talg_w^{F^m}$ associated with $s$. Thanks to~\cite[9.D]{BonnafeSLn},
we have
$$\Res_{\Zalg^{F^m}}^{\Talg_w^{F^m}}(\theta_s^{[m]})=\nu.$$
Furthermore, $s$ is $F^*$-stable. Thus, $\theta_s^{[m]}$ is
$F$-stable (see the proof of~\cite[1.1]{Br6}), implying $\nu$ is
$F$-stable. Since $N_{F^m/F}:\Zalg^{F^m}\rightarrow\Zalg^F$ is surjective,
Lemma~\ref{surjnorm} implies there is a linear character $\nu_0$ of
$\Zalg^{F}$ satisfying $N_{F^m/F}^*(\nu_0)=\nu$.
Write~$\theta_s$ for the linear character of
$\Talg_w^{F}$ associated with $s$. 
By the remarks preceding
Lemma~\ref{resZ}, we have $N_{F^m/F}^*(\theta_s)=\theta_s^{[m]}$.
Let $z_0\in\Zalg^{F}$. Then there is $z\in\Zalg^{F^m}$ with
$N_{F^m/F}(z)=z_0$. It follows
\begin{eqnarray*}
\nu_0(z_0)&=&\nu_0\circ N_{F^m/F}(z)\\
&=&\nu(z)\\
&=&\Res_{Z^{F^m}}^{T_w^{F^m}}(\theta_s^{[m]})(z)\\
&=&\theta_s^{[m]}(z)\\
&=&\theta_s\circ N_{F^m/F}(z)\\
&=&\theta_s(z_0). 
\end{eqnarray*} 
In particular, we have $\Res_{\Zalg^F}^{\Talg_w^F}(\theta_s)=\nu_0$
and~\cite[9.D]{BonnafeSLn} implies

$$\Res_{\Zalg^F}^{\Galg^F}(\rho_s)=\rho_s(1)\cdot\nu_0,$$ as
required.
% $$R_{\Talg_w}^\Galg(\theta^{[m]}_s) =
% R_{\Talg_w}^\Galg(N_{F^m/F}^*(\theta_s)).$$ Now, the claim follows
% from equation (\ref{eq:rhos}) and formula \cite[7.5.3]{Carter2}.
\end{proof}

\subsection{Central products}\label{cenprod}
We recall some general facts about characters of central products. If
$N$ is a normal subgroup of a finite group $G$, then we can associate
to every $G$-invariant irreducible character $\chi$ of $N$ an element
$[\chi]_{G/N}$ of the cohomology group $H^2(G/N,\C^{\times})$ of
$G/N$; see~\cite[11.7]{isaacs} for more details. If $G=HK$ is a 
central product with $Z = H \cap K$ and $\nu$ a linear character of
$Z$, then for $\chi_H\in\Irr(H|\nu)$ and $\chi_K\in\Irr(K|\nu)$, one
defines 
\begin{equation}\label{eq:dotproduct}
(\chi_H\cprod\chi_K)(hk)=\chi_H(h)\chi_K(k)
\end{equation}
for all $h\in H$ and $k\in K$. Note that $\chi_H\cprod\chi_K$ is
a well-defined irreducible character of $HK$ and every
irreducible character of $HK$ has this form; see \cite[5.1]{IMN}.

%%%%%%%%%%%%%%%%%%%%%%%%%%%%%%%%%%%%%%%%%%%%%%%%%%%%%%%%%%%%%%%%%%%%%%%%%%%%%%%%

\section{Action of automorphisms on semisimple characters of finite
reductive groups}
\label{partie1}

Let $F':\Galg\rightarrow\Galg$ be a Frobenius map of $\Galg$ commuting
with $F$ such that $\Talg$ and~$\Balg$ are $F'$-stable. In particular,
for $\alpha\in\Phi$, $F'(\Xalg_{\alpha})$ is a non-trivial minimal
closed unipotent subgroup of $\Galg$ normalized by $\Talg$. Write
$F'(\alpha)\in\Phi$ for the corresponding root. Moreover, we suppose
that for every $\alpha \in \Phi$, there is a non-negative integer
$m\geq 0$ such that for all $t\in\overline{\F}_p$ we have
$F'(x_{\alpha}(t))=x_{F'(\alpha)}(t^{p^m})$.  
Note that $\Ualg$ and~$\Ualg_1$ are $F'$-stable
(because $F$ and $F'$ commute), so $F'(\Phi^+)=\Phi^+$
and $F'(\Delta)=\Delta$. 

In this section, we study the action of $F'$ on
$\Irr_{p'}(\Galg^F)$. In a first step, we show that 
$\Irr_{p'}(\Galg^F)$ is exactly the set of semisimple irreducible
characters of $\Galg^F$. Then, we determine the action of $F'$ on these
semisimple characters. As an intermediate result, we obtain the action
of $F'$ on the set of regular irreducible characters of $\Galg^F$.

\subsection{$F'$-stable linear characters of $\Ualg_1^F$}\label{secstable}

We assume the setup from Section~\ref{nota}. 
For $J\subseteq \Delta$, let
$\widetilde{\omega}_J$ be the set of characters of $\Ualg_1$ of the
form $\prod_{\alpha\in J}\phi_{\alpha}$ where $\phi_{\alpha}$ is a
non-trivial irreducible character of $\Xalg_{\alpha}^F$. 

\begin{lemma} 
Let $J$ be an $F'$-stable subset of $\Delta$. Then
$\widetilde{\omega}_J$ contains an $F'$-stable character.
\label{unistable}
\end{lemma}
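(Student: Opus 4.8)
The plan is to reduce the statement to a concrete computation inside the finite abelian group $\Ualg_1^F = \prod_{\alpha\in\Delta}\Xalg_\alpha^F$, using the fact that $F$ acts trivially on $W$ and hence permutes the root subgroups in a controlled way. First I would fix $\alpha\in J$ and analyze the $F'$-orbit of $\alpha$ inside $\Delta$; since $F'$ stabilizes $J$ (and $\Delta$), this orbit $\{\alpha, F'(\alpha),\dots,F'^{k-1}(\alpha)\}$ has some length $k$ dividing the order of the relevant cyclic part of $F'$, and $F'^k$ stabilizes $\Xalg_\alpha$. The key point is that, since $F$ acts trivially on $W$ and $F'$ commutes with $F$, both $F$ and $F'^k$ act on the one-dimensional $\F_p$-space $\Xalg_\alpha$ (identified with $\overline{\F}_p$ via $x_\alpha$) by raising to a power of $p$: $^Fx_\alpha(t)=x_\alpha(t^q)$ and $F'^k(x_\alpha(t)) = x_\alpha(t^{p^{e}})$ for some $e$. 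So $\Xalg_\alpha^F$ is a cyclic group of order $q-1$ on which $F'^k$ acts as the $p^e$-power map.

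Next I would produce, for one chosen representative $\alpha$ in each $F'$-orbit on $J$, a nontrivial irreducible character $\phi_\alpha$ of $\Xalg_\alpha^F$ that is $F'^k$-stable; then I propagate it around the orbit by setting $\phi_{F'^j(\alpha)} := \phi_\alpha \circ (F'^j)^{-1}$ (restricted appropriately to $\Xalg_{F'^j(\alpha)}^F$), and finally take $\phi_0 = \prod_{\alpha\in J}\phi_\alpha$, which is then $F'$-stable by construction. The existence of a nontrivial $F'^k$-stable character of $\Xalg_\alpha^F$ is exactly the statement that the $p^e$-power map on $\Irr(\Xalg_\alpha^F)\cong\Z/(q-1)$ has a nontrivial fixed point; equivalently that $\gcd(p^e-1,q-1) > 1$. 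Since $q = p^n$ and $F'$ commutes with $F$, the exponent $e$ is a multiple of the length of the $F'$-orbit times something dividing $n$ — in any case $p^e - 1$ and $q-1 = p^n-1$ share the common factor $p-1 > 1$, because $p-1$ divides every $p^j - 1$. Thus a nontrivial $F'^k$-stable linear character always exists.

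The main obstacle I anticipate is the bookkeeping needed to see that the product $\prod_{\alpha\in J}\phi_\alpha$ is genuinely $F'$-stable and not merely fixed up to the orbit permutation: one must check that transporting the chosen character around an $F'$-orbit via $\phi_{F'(\alpha)} = \phi_\alpha\circ F'^{-1}$ is consistent after a full cycle, which is precisely where the $F'^k$-stability of the chosen representative character is used. A cleaner way to package this, which I would probably adopt in the write-up, is to invoke Lemma~\ref{surjnorm}: the group $\Halg=\Ualg_1$ (or each factor $\Xalg_\alpha$) is connected, so the norm map $N_{F'^k/F'}$ is surjective, hence every $F'$-stable character of $\Xalg_\alpha^{F'^k}$... — more directly, one applies the surjectivity of the norm map for the connected group $\prod_{\alpha\in J}\Xalg_\alpha$ with respect to an appropriate power of $F'$ to pull back any nontrivial character of the $F'$-fixed points of the smaller group, automatically obtaining an $F'$-stable character whose restriction to each $\Xalg_\alpha^F$ is nontrivial. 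Either route works; the only real content is the elementary number-theoretic fact that $p-1$ is a common divisor, guaranteeing nontriviality can be preserved.
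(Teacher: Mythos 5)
Your overall skeleton --- decompose $J$ into $F'$-orbits, produce a nontrivial character of $\Xalg_{\alpha}^F$ fixed by $F'^{k}$ for one representative $\alpha$ of each orbit, transport it around the orbit by $\phi_{F'^j(\alpha)}=\phi_\alpha\circ (F'^j)^{-1}$, and take the product --- is exactly the structure of the paper's proof, and the consistency issue you flag (closing up after a full cycle requires $F'^k$-stability of the representative character) is handled there in the same way. The genuine gap is in your justification of the key existence step. The group $\Xalg_\alpha^F$ is not cyclic of order $q-1$: by the setup of Section~\ref{nota}, $\Xalg_\alpha\cong(\overline{\F}_p,+)$ as algebraic groups and ${}^Fx_\alpha(t)=x_\alpha(t^q)$, so $\Xalg_\alpha^F\cong(\F_q,+)$ is elementary abelian of order $q=p^n$. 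Hence your identification $\Irr(\Xalg_\alpha^F)\cong\Z/(q-1)$ and the ensuing computation with $\gcd(p^e-1,q-1)\ge p-1$ concern the wrong group (in effect you have replaced the additive group of $\F_q$ by its multiplicative group); and even within that mistaken framework the argument collapses for $p=2$, where $p-1=1$ yields no nontrivial fixed character, although the lemma is needed, and true, in characteristic $2$ as well.

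The correct (and easy) repair is the one the paper uses: since $F'^k(x_\alpha(1))=x_\alpha(1^{p^e})=x_\alpha(1)$, the automorphism $F'^k$ of the abelian group $\Xalg_\alpha^F$ fixes at least two elements, so by Brauer's permutation lemma \cite[Theorem (6.32)]{isaacs} it fixes at least two irreducible characters, one of which is therefore nontrivial. (Equivalently, the fixed points of $t\mapsto t^{p^e}$ in $\F_q$ form a subfield containing $\F_p$, so there are at least $p\ge 2$ fixed characters.) Your fallback via Lemma~\ref{surjnorm} does not work as stated either: that lemma relates $\Irr(\Halg^{F'})$ to the $F'$-stable characters of $\Halg^{F'^m}$, whereas here the relevant finite group is $\Xalg_\alpha^{F}$ --- fixed points of $F$, not of a power of $F'$ --- and the surjectivity of a norm-type map on this finite group with respect to the automorphism $F'^k$ is not something Lemma~\ref{surjnorm} provides; the Lang--Steinberg argument behind that lemma applies to the algebraic group with respect to its own Frobenius, not to an auxiliary automorphism of the finite fixed-point group.
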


\begin{proof} 
Let $\Lambda$ be the set of $F'$-orbits on $J$. For
$\lambda\in\Lambda$, fix $\alpha_{\lambda}\in\lambda$ and write
$r_{\lambda}$ for a non-negative integer such that
$F'^{r_{\lambda}}(\alpha_{\lambda})=\alpha_{\lambda}$. 
Then $F'^{r_\lambda}$ is an automorphism of $\Xalg_{\alpha_{\lambda}}^F$ 
(because $F$ and $F'$ commute).
Since $F'^{r_{\lambda}}(x_{\alpha_{\lambda}}(1))=x_{\alpha_{\lambda}}(1)$, 
it follows
from~\cite[Theorem (6.32)]{isaacs} that $\Xalg_{\alpha_{\lambda}}^F$
has an $F'$-stable character. Fix such a character
$\phi_{\lambda}\in\Irr(\Xalg_{\alpha_{\lambda}}^F)$.
For $0\leq i\leq r_{\lambda}-1$, the map
$F'^i:\Xalg_{\alpha_{\lambda}}^F\rightarrow\Xalg_{F'^i(\alpha_{\lambda})}^F$
is a group isomorphism, and hence induces a natural bijection 
$\Irr(\Xalg_{\alpha_\lambda}^F) \rightarrow \Irr(\Xalg_{F'^i(\alpha_\lambda)}^F)$. We
write $^{F'^i}\!\!\phi_{\lambda}$ for the image of $\phi_\lambda$
under this bijection. Therefore, the character
$$\phi=\prod_{\lambda\in\Lambda}\prod_{i=0}^{r_{\lambda}-1}
{}^{{F'}^i}\!\!\phi_{\alpha_{\lambda}}$$
is in $\widetilde{\omega}_J$ and is $F'$-stable.
\end{proof}

\begin{remark}
Note that Lemma~\ref{unistable} is also true for a non-split Frobenius
$F$.
\end{remark}

\subsection{Action on the semisimple and regular characters}

In this subsection, we study the action of~$F'$ on the set of
semisimple and the set of regular irreducible characters of $\Galg^F$.

In the following, we say that the prime $p$ is nonsingular if
\begin{itemize}
\item if $p=2$, then $\Galg$ has no simple component of type $B_n$,
$C_n$, $F_4$ or $G_2$.
\item if $p=3$, then $\Galg$ has no simple component of type $G_2$.
\end{itemize}

\begin{lemma}\label{parasemi}
We keep the notation of Subsection~\ref{grpsetup}.
% and additionally, we
% assume that $\Galg$ is a simple algebraic group. 
If $p$ is a nonsingular prime for
$\Galg$, then the set
of irreducible $p'$-characters of $\Galg^F$ is the set of semisimple
irreducible characters of $\Galg^F$. 
\end{lemma}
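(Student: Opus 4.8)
The plan is to identify $\Irr_{p'}(\Galg^F)$ with the semisimple characters by a two-way inclusion, using the fact that the regular characters are the constituents of the Gelfand-Graev characters $\Gamma_z$ and that semisimple characters are Alvis-Curtis duals of regular ones.

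First I would recall that, under the nonsingularity hypothesis on $p$, the prime $p$ does not divide the order of $W$ when restricted to each simple component, and more importantly that the index $[\Galg^F : \Ualg^F \Talg^F] = [\Galg^F:\Balg^F]$ — equivalently the degree of the trivial-character-induced module — is prime to $p$, while $|\Ualg^F|$ is a power of $p$. The key numerical input is that for a semisimple element $s\in\Galg^{*F^*}$ the regular character $\chi_{s,1}$ occurring in $\Gamma_1=\Ind_{\Ualg^F}^{\Galg^F}(\phi_1)$ has degree $\chi_{s,1}(1)=|\Galg^F:\Cen_{\Galg^*}^{\circ}(s)^{F^*}|_{p'}\cdot|\Galg^F|_p / (\text{something})$; more cleanly, by the standard degree formula the semisimple character $\rho_{s,z}=\pm D_{\Galg}(\chi_{s,z})$ has degree $\rho_{s,z}(1)=[\Galg^{*F^*}:\Cen_{\Galg^*}^{\circ}(s)^{F^*}]_{p'}$, which is manifestly prime to $p$. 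Hence every semisimple character lies in $\Irr_{p'}(\Galg^F)$. This gives one inclusion and requires only citing the degree formula for semisimple characters (e.g.\ \cite[8.4.8]{Carter2} or \cite[15.9]{BonnafeSLn}).

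For the reverse inclusion, let $\chi\in\Irr_{p'}(\Galg^F)$. I would restrict $\chi$ to $\Ualg^F$: since $|\Ualg^F|$ is a power of $p$ and $\chi(1)$ is prime to $p$, the restriction $\Res_{\Ualg^F}^{\Galg^F}(\chi)$ contains a linear character $\psi$ of $\Ualg^F$ in a way that is controlled — more precisely, one wants $\psi$ to be \emph{regular}, i.e.\ $\Galg^F$-conjugate to some $\phi_z$. The cleanest route is: $\chi$ of $p'$-degree forces $\langle\Res^{\Galg^F}_{\Ualg^F}(\chi),\phi_z\rangle\ne 0$ for some regular $\phi_z$, because the regular characters $\phi_z$ are exactly the linear characters of $\Ualg^F$ whose stabiliser in $\Talg^F$ is trivial (this is the content of \cite[14.28]{DM} together with the orbit description via $H^1(F,\Zalg)$), and counting the $p'$-part shows a $p'$-degree character cannot be supported entirely on the non-regular orbits. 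Consequently $\chi$ is a constituent of $\Ind_{\Ualg^F}^{\Galg^F}(\phi_z)=\Gamma_z$, hence $\chi$ is a regular character in the terminology of the excerpt — but then $D_{\Galg}(\chi)$ is, up to sign, a semisimple character, and $\chi = \pm D_{\Galg}(D_{\Galg}(\chi))$ is the dual of a semisimple character. Wait — I should instead apply the argument with the roles reversed: a $p'$-degree $\chi$ is a constituent of some $\Gamma_z$ means $\chi$ is \emph{regular}, and I want $\chi$ semisimple. So the correct statement to prove is that $p'$-degree $\Rightarrow$ semisimple directly; I would do this by applying Alvis-Curtis duality, which preserves $p'$-degree up to sign (since $D_{\Galg}$ is an isometry and $D_{\Galg}(\chi)(1) = \pm\chi(1)\cdot$(a unit), by \cite[8.15]{DM} the degree of $D_{\Galg}(\chi)$ equals $\chi(1)$ times a power of $q$ up to sign — in fact $|D_{\Galg}(\chi)(1)|_{p'} = |\chi(1)|_{p'}$), and the Gelfand-Graev argument above applied to $D_{\Galg}(\chi)$ shows $D_{\Galg}(\chi)$ is regular, whence $\chi=\pm D_{\Galg}^2(\chi)$ is semisimple by the earlier description $\rho_{s,z}=\pm D_{\Galg}(\chi_{s,z})$.

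The main obstacle I anticipate is the counting step: proving that a $p'$-degree irreducible constituent of $\Res^{\Galg^F}_{\Ualg^F}$ of the relevant character must meet a \emph{regular} (not merely linear) character of $\Ualg^F$. This is where the nonsingularity of $p$ is genuinely used: for singular $p$ (type $B,C,F_4,G_2$ at $p=2$, or $G_2$ at $p=3$) the component group $\Zalg/\Zalg^\circ$ or the relevant isogeny behaviour interferes and there exist $p'$-degree characters that are not semisimple. Concretely, I would bound $\sum_{\psi\text{ non-regular linear}}\langle\Res\chi,\psi\rangle\,\psi(1)\,[\Galg^F:\Ualg^F]$ against $\chi(1)^2$-type quantities, using that non-regular linear characters of $\Ualg^F$ have nontrivial $\Talg^F$-stabiliser and hence induce with a $p$-divisible multiplicity pattern — the precise inequality is delicate and is presumably where the authors invoke a known structural result (e.g.\ from \cite{DM} Chapter 14 or from Geck's work on $p'$-degree characters). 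I expect the published proof to shortcut this via an explicit citation rather than the counting I sketch here.
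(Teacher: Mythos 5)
Your first inclusion (semisimple $\Rightarrow$ $p'$-degree via the degree formula $\rho_{s,z}(1)=[\Galg^{*F^*}:\Cen_{\Galg^*}(s)^{F^*}]_{p'}$) is fine. The reverse inclusion, however, contains a genuine gap: the whole argument hinges on the claim that a $p'$-degree character (or its Alvis--Curtis dual) must be a constituent of some Gelfand--Graev character $\Gamma_z$, and this is exactly the step you do not prove. Your proposed route -- that $p'$-degree forces $\langle\Res^{\Galg^F}_{\Ualg^F}(\chi),\phi_z\rangle\neq 0$ for some regular $\phi_z$ -- is false as stated: the trivial character has degree $1$ but $\langle\Res^{\Galg^F}_{\Ualg^F}(1),\phi_z\rangle=\langle 1,\phi_z\rangle_{\Ualg^F}=0$ for every regular $\phi_z$. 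You notice this and pass to $D_{\Galg}(\chi)$, which is the right idea, but then the statement you need (``$\chi(1)$ prime to $p$ implies $D_{\Galg}(\chi)$ is regular'') is precisely the hard claim, and the counting you sketch is left open by your own admission. Several subsidiary assertions are also off: the regular linear characters of $\Ualg^F$ do not have trivial $\Talg^F$-stabiliser (their stabiliser is $\Zalg^F$, which here has order $2$); and $D_{\Galg}$ does not preserve degrees (it sends the trivial character to $\pm\operatorname{St}$), only the $p'$-part of the degree, a fact that requires the theory of degree polynomials/generic degrees rather than the isometry property you invoke. Finally, the nonsingularity hypothesis never actually enters your argument at a precise point, whereas it is essential (for the excluded groups such as $B_m(2)$ there do exist non-semisimple characters of odd degree).

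For comparison, the paper avoids all of this by reducing to the connected-centre group: it quotes \cite[Lemma 5]{Br6}, which says that for $\widetilde{\Galg}^F$ (connected centre, $p$ nonsingular -- this is where the hypothesis is used) the $p'$-characters are exactly the $\rho_{\widetilde{s}}$, and then transfers the statement to $\Galg^F$ by Clifford theory: the semisimple characters of $\Galg^F$ are the constituents of the $\Res_{\Galg^F}^{\widetilde{\Galg}^F}(\rho_{\widetilde{s}})$, hence of $p'$-degree, while any constituent $\gamma$ of the restriction of a character $\widetilde{\chi}$ with $p\mid\widetilde{\chi}(1)$ satisfies $\widetilde{\chi}(1)=m\gamma(1)$ with $m$ dividing the $p'$-number $|\widetilde{\Galg}^F/\Galg^F|$, so $p\mid\gamma(1)$. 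If you want a self-contained argument along your lines, you would essentially have to reprove the connected-centre result (degrees in a Lusztig series via Jordan decomposition, plus the classification of unipotent characters of $p'$-degree), which is exactly what the citation to \cite{Br6} packages.
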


\begin{proof}Let $(\widetilde{\Galg}, F)$ be as above. 
Note that $p$ is a nonsingular prime for $\Galg$ if and only if $p$ is a
nonsingular prime for $\widetilde{\Galg}$.
In particular, 
%$p$ is a
%good prime for $\Galg$, 
the simple components of $\widetilde{\Galg}^F$ are not one of
the groups $B_m(2)$, $G_2(2)$, $G_2(3)$, $F_4(2)$, $^2B_2(2)$,
$^2G_2(3)$, $^2F_4(2)$. Moreover, 
Since the center of $\widetilde{\Galg}$ is
connected, the proof of \cite[Lemma 5]{Br6} also holds for
$\widetilde{\Galg}^F$, and so
$$\Irr_{p'}(\widetilde{\Galg}^F)=\{\rho_{\widetilde{s}}\ | \
\widetilde{s}\in\widetilde{S}\},$$
where~$\widetilde{S}$ is a set of representatives for the semisimple
classes of~$\widetilde{\Galg}^{*F^*}$.
In particular, the
semisimple characters of $\Galg^F$, which are the constituents 
of the restriction of~$\rho_{\widetilde{s}}$, are $p'$-characters of
$\Galg^F$. 

Conversely, let $\widetilde{\chi}$ be a non semisimple irreducible
character of~$\widetilde{\Galg}^F$, that is $p$ divides
$\widetilde{\chi}(1)$, and let $\gamma$ be an irreducible constituent
of
$\Res_{\Galg^F}^{\widetilde{\Galg}^F}(\widetilde{\chi})$. 
By \cite[11.29]{isaacs},
we have $\widetilde{\chi}(1) = m \cdot \gamma(1)$ where $m$ is an
integer dividing $|\widetilde{\Galg}^F / \Galg^F|$. Since 
$|\widetilde{\Galg}^F / \Galg^F|$ is not divisible by $p$, it follows
that $\gamma(1)$ is a multiple of $p$. Thus
$$\Irr_{p'}(\Galg^F)=\{\rho_{s,z}\ |\ s\in\mathcal{S},\,z\in
H^1(F,\Zalg)\}.$$
\end{proof}

\begin{remark} 
% The proof shows that Lemma~\ref{parasemi} still holds for bad primes
% $p$ if we assume that $\widetilde{\Galg}^F$ is not $B_m(2)$, $G_2(2)$,
% $G_2(3)$, $F_4(2)$, $^2B_2(2)$, $^2G_2(3)$, $^2F_4(2)$. 
Note that, in Lemma~\ref{parasemi} we do not need to suppose that $p$
is a good prime for $\Galg$. Moreover, if $\Galg$ is simple and $p$ is
singular for $\Galg$ (the prime $p$ is said singular for $\Galg$ if it is not
nonsingular), then the $p'$-characters
of $\Galg^F$ are well-known, see for example~\cite[Remark 2]{Br6}. In
particular, the groups~$B_m(2)$ are treated in~\cite{Cabanes}.
\end{remark}

Let $\mathcal{S}$ be a set of representatives for the
semisimple classes of $\Galg^{*F^*}$. Since $F'$
stabilizes~$\widetilde{\omega}_{\Delta}$, Lemma~\ref{unistable}
implies that there is an $F'$-stable character in
$\widetilde{\omega}_{\Delta}$. We fix such a character
$\phi_0\in\widetilde{\omega}_{\Delta}$ and use it for the construction
of the Gelfand-Graev characters as described in~Subsection~\ref{regsemi}. 

\begin{lemma}\label{ImgDL} 
For $w\in W$, let $\Talg_w$ denote the $F$-stable maximal
torus of $\Galg$ obtained from $\Talg$ by twisting with $w$.
Then for all semisimple elements $s$ of $\widetilde{\Galg}^F$
and $w\in W$, one has
$$F'\left(R_{{\Talg}_{w}}^{{\Galg}}({s})\right)=
R_{{\Talg}_{F'(w)}}^{{\Galg}}
\left(F'^{*-1}({s})\right).$$
\end{lemma}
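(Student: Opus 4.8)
The plan is to trace through the definition of the Deligne--Lusztig character and see how the Frobenius $F'$ acts on each ingredient. Recall that $R_{\Talg_w}^{\Galg}(s) = R_{\Talg_w}^{\Galg}(\theta_s)$, where $\theta_s \in \Irr(\Talg_w^F)$ is the linear character associated to $s$ under the isomorphism $\Talg_w^{*F^*} \to \Irr(\Talg_w^F)$, and where $R_{\Talg_w}^{\Galg}(\theta_s)$ is the virtual character whose value at $g \in \Galg^F$ is computed from the Lefschetz traces of $g \times \theta_s^{-1}$ (or the appropriate variant) on the $\ell$-adic cohomology of the Deligne--Lusztig variety $Y_w$ attached to $\Talg_w$ and a representative $\dot w$. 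The key structural input is that $F'$ commutes with $F$ and stabilizes $\Talg$ and $\Balg$, so $F'$ permutes the $F$-stable maximal tori: concretely $F'(\Talg_w)$ is $\Galg^F$-conjugate to $\Talg_{F'(w)}$, since twisting $\Talg$ by $w$ and then applying the automorphism $F'$ (which fixes $\Talg$ and acts on $W$ by $w \mapsto F'(w)$) produces a torus in the class of $F'(w)$.

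First I would record the compatibility of $F'$ with the torus--character duality: under the identification $\Talg_w^{*F^*} \cong \Irr(\Talg_w^F)$, the automorphism of $\Talg_w^F$ induced by $F'$ corresponds on the dual side to the automorphism induced by $F'^*$, hence the character $\theta_s$ is carried by $F'$ to the character associated with $F'^{*-1}(s)$ on $\Talg_{F'(w)}^F$. (The inverse appears because pulling a character back along a group automorphism corresponds to the inverse automorphism on the dual torus; this is the same bookkeeping used, e.g., in the proof of~\cite[1.1]{Br6}.) Second, I would invoke the functoriality of Deligne--Lusztig induction under automorphisms: if $\sigma$ is an automorphism of $\Galg^F$ induced by an algebraic automorphism commuting with $F$ and mapping the $F$-stable torus $\Talg'$ to $\Talg''$, then ${}^{\sigma}R_{\Talg'}^{\Galg}(\lambda) = R_{\Talg''}^{\Galg}({}^{\sigma}\lambda)$ as virtual characters; this follows because $\sigma$ induces an isomorphism of the corresponding Deligne--Lusztig varieties $Y_{\Talg'} \to Y_{\Talg''}$ compatible with the $\Galg^F$-action (twisted by $\sigma$) and with the torus actions, so the Lefschetz-trace formula gives the stated identity. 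Applying this with $\sigma = F'$, $\Talg' = \Talg_w$, $\Talg'' = \Talg_{F'(w)}$ and $\lambda = \theta_s$, and then substituting the character identity from the first step, yields
$$F'\!\left(R_{\Talg_w}^{\Galg}(\theta_s)\right) = R_{\Talg_{F'(w)}}^{\Galg}\!\left({}^{F'}\theta_s\right) = R_{\Talg_{F'(w)}}^{\Galg}\!\left(\theta_{F'^{*-1}(s)}\right) = R_{\Talg_{F'(w)}}^{\Galg}\!\left(F'^{*-1}(s)\right),$$
which is the asserted formula.

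The main obstacle I expect is making the second step precise: one must choose representatives $\dot w$ for $w$ in $\Nn_{\Galg}(\Talg)$ and check that $F'$ (or a $\Galg^F$-conjugate of it) carries the variety $Y_{\dot w}$ to $Y_{\dot w'}$ for a suitable representative $\dot w'$ of $F'(w)$, so that the identification of tori is the one implicit in the notation $\Talg_{F'(w)}$, and to verify that the class-function identity is independent of these auxiliary choices. This is routine but needs care; alternatively one can bypass the geometry entirely by using the character formula for $R_{\Talg_w}^{\Galg}$ together with the fact that $F'$ permutes the $F$-stable maximal tori and Green functions are preserved by automorphisms induced by algebraic automorphisms commuting with $F$, which reduces the identity to the already-noted transformation law for $\theta_s$. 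Either route gives the result; I would present the character-formula version as the cleaner one to write down.
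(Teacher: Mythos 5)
Your proof is correct and follows essentially the same route as the paper: the paper simply cites the proof of \cite[Proposition 1]{Br6} for the identity $F'\left(R_{\Talg_w}^{\Galg}(s)\right)=R_{F'(\Talg_w)}^{\Galg}\left(F'^{*-1}(s)\right)$ (which is exactly the automorphism-functoriality of Deligne--Lusztig induction together with the dual bookkeeping you spell out), and then spends its effort on the point you flag as the one needing care, namely that $F'(\Talg_w)$ is a torus obtained from $\Talg$ by twisting with $F'(w)$. The paper settles that point by the explicit computation you only sketch: writing $x^{-1}F(x)=n_w$ and $\Talg_w=x\Talg x^{-1}$, one gets $F'(\Talg_w)=F'(x)\Talg F'(x)^{-1}$ and $F'(x)^{-1}F(F'(x))=F'(n_w)$, which represents $F'(w)$ because $\Talg$ is $F'$-stable --- precisely the verification you describe as routine.
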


\begin{proof}
In the proof of~\cite[Proposition 1]{Br6}, it is shown that
$$F'\left(R_{{\Talg}_{w}}^{{\Galg}}({s})\right)=
R_{F'({\Talg}_{w})}^{{\Galg}}
\left(F'^{*-1}({s})\right).$$
Since $F$ and $F'$ commute, the maximal torus $F'({\Talg}_w)$ is
$F$-stable. We claim that $F'({\Talg}_w)$ is obtained from $\Talg$ by
twisting with $F'(w)$. There is $x\in{\Galg}$ such that
\begin{equation} \label{eq:twist}
x^{-1}F(x)=n_w\quad\textrm{and}\quad{\Talg}_w =x{\Talg}x^{-1},
\end{equation}
where $n_w\in\Nn_{{\Galg}}({\Talg})$ and $n_w{\Talg}=w$. Let
$n_{F'(w)} := F'(n_w)$. Since $\Talg$ is $F'$-stable, 
$n_{F'(w)} \in N_\Galg(\Talg)$ and $n_{F'(w)} \Talg = F'(w)$
and equation (\ref{eq:twist}) implies
$$F'({\Talg}_w)=F'(x){\Talg}F'(x)^{-1},$$
and
$$F'(x)^{-1}F(F'(x))=F'(x^{-1}F(x))=F'(n_w)=n_{F'(w)}.$$
This yields the claim.
\end{proof}

\begin{theorem}\label{imageF}
We make the same assumptions as in Lemma~\ref{ImgDL}. Additionally, we
suppose that $F'$ acts trivially on the root system $\Phi$.
Let $\Ualg$ be the unipotent radical of $\Balg$. 
For all $s\in\mathcal{S}$ and  $z\in H^1(F,\Zalg)$, one has
$$F'(\rho_{s,z})=\rho_{F'^{*-1}(s),F'(z)}\quad \textrm{and}\quad 
F'(\chi_{s,z})=\chi_{F'^{*-1}(s),F'(z)}.$$
\end{theorem}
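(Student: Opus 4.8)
The strategy is to reduce the statement about the characters $\rho_{s,z}$ and $\chi_{s,z}$ to a statement about the Gelfand--Graev characters $\Gamma_z$ and the class functions $\chi_s$, for which we already have good control. First I would treat $\chi_s$. By definition~\eqref{eq:chis}, $\chi_s$ is a $W^\circ(s)$-average of signed Deligne--Lusztig characters $R^{\Galg}_{\Talg_w}(s)$. Applying $F'$ and using Lemma~\ref{ImgDL}, we get
$$F'(\chi_s)=\frac{\varepsilon_{\Galg}\varepsilon_{\Cen_{\Galg^*}^\circ(s)}}{|W^\circ(s)|}\sum_{w\in W^\circ(s)}\varepsilon(w)R^{\Galg}_{\Talg_{F'(w)}}(F'^{*-1}(s)).$$
Since $F'$ acts trivially on $\Phi$ (hence on $W$), the reindexing $w\mapsto F'(w)=w$ is trivial, and $W^\circ(F'^{*-1}(s))=W^\circ(s)$ because $F'^*$ acts trivially on $W$ and $C^\circ_{\Galg^*}(F'^{*-1}(s))=F'^{*-1}(C^\circ_{\Galg^*}(s))$ has a Weyl group identified with that of $C^\circ_{\Galg^*}(s)$; likewise $\varepsilon_{\Cen^\circ_{\Galg^*}(F'^{*-1}(s))}=\varepsilon_{\Cen^\circ_{\Galg^*}(s)}$. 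Hence $F'(\chi_s)=\chi_{F'^{*-1}(s)}$. The same computation with the sign character dropped gives $F'(\rho_s)=\rho_{F'^{*-1}(s)}$.

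Next I would handle the regular characters $\Gamma_z$. By construction $\Gamma_z=\Ind_{\Ualg^F}^{\Galg^F}(\phi_z)$ with $\phi_z={}^{t_z}\phi_0$, and $\phi_0$ was chosen $F'$-stable (this is exactly why Lemma~\ref{unistable} was proved for an $F'$-stable subset $\Delta$). Applying $F'$ to an induced character and using that $F'$ normalizes $\Ualg^F$ and $\Galg^F$, one gets $F'(\Gamma_z)=\Ind_{\Ualg^F}^{\Galg^F}({}^{F'(t_z)}\phi_0)$. A short cohomology computation shows $F'(t_z)^{-1}F(F'(t_z))=F'(t_z^{-1}F(t_z))\in F'(z)$, so $F'(\phi_z)$ is $\Talg^F$-conjugate to $\phi_{F'(z)}$, whence $F'(\Gamma_z)=\Gamma_{F'(z)}$. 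Now $\chi_{s,z}$ is defined as the \emph{unique} common irreducible constituent of the multiplicity-free characters $\chi_s$ and $\Gamma_z$; applying $F'$, which is a ring automorphism of $\Z\Irr(\Galg^F)$ permuting irreducible characters, $F'(\chi_{s,z})$ is the unique common constituent of $F'(\chi_s)=\chi_{F'^{*-1}(s)}$ and $F'(\Gamma_z)=\Gamma_{F'(z)}$, which by definition is $\chi_{F'^{*-1}(s),F'(z)}$. Finally, since $\rho_{s,z}=\varepsilon_{\Galg}\varepsilon_{\Cen^\circ_{\Galg^*}(s)}D_{\Galg}(\chi_{s,z})$ and $D_{\Galg}$ is built from Harish-Chandra induction/restriction along $F$-stable parabolics containing $\Balg$ — a set permuted by $F'$ since $F'$ stabilizes $\Balg$ and acts trivially on $\Phi$, hence fixes each such parabolic — one checks $F'\circ D_{\Galg}=D_{\Galg}\circ F'$; combined with the sign identifications above, $F'(\rho_{s,z})=\varepsilon_{\Galg}\varepsilon_{\Cen^\circ_{\Galg^*}(s)}D_{\Galg}(\chi_{F'^{*-1}(s),F'(z)})=\rho_{F'^{*-1}(s),F'(z)}$ (using that the sign factor for $F'^{*-1}(s)$ equals that for $s$).

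The main obstacle I anticipate is the bookkeeping around the noncanonical labelling, specifically verifying that $F'(\phi_z)$ really lands in the $\Talg^F$-orbit labelled $F'(z)$ rather than some other orbit: this requires checking that the identification of $\Talg^F$-orbits of regular characters of $\Ualg^F$ with $H^1(F,\Zalg)$ from~\cite[14.28]{DM} is compatible with the $F'$-action, i.e. that $z\mapsto F'(z)$ is the induced map on $H^1(F,\Zalg)$. The other delicate point is the commutation $F'\circ D_{\Galg}=D_{\Galg}\circ F'$: one must use that $F'$ permutes the $F$-stable Levi complements compatibly and commutes with $R^{\Galg}_{\Lalg}$ and ${}^*R^{\Galg}_{\Lalg}$, which follows from the analogous commutation for Deligne--Lusztig (Harish-Chandra) functors — essentially Lemma~\ref{ImgDL} specialized to the split torus inside $\Lalg$, together with the fact that $F'$ fixes every $F$-stable parabolic containing $\Balg$ when $F'$ acts trivially on $\Phi$. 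Everything else is a formal consequence of $F'$ being an automorphism of $\Galg^F$ that preserves all the structure entering the definitions.
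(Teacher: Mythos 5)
Your proposal is correct and follows essentially the same route as the paper: compute $F'(\Gamma_z)=\Gamma_{F'(z)}$ via the $F'$-stable regular character $\phi_0$ and the cocycle computation $F'(t_z)^{-1}F(F'(t_z))\in F'(z)$, compute $F'(\chi_s)=\chi_{F'^{*-1}(s)}$ from Lemma~\ref{ImgDL}, identify $F'(\chi_{s,z})$ as the unique common constituent, and then transfer to $\rho_{s,z}$ using that $F'$ commutes with Harish-Chandra induction/restriction (hence with $D_{\Galg}$) because $F'$ fixes every $F$-stable parabolic containing $\Balg$. The points you flag as delicate (the $H^1(F,\Zalg)$ labelling and the commutation with $D_{\Galg}$) are exactly the ones the paper settles by the same computations you outline.
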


\begin{proof}
For $s\in\mathcal{S}$, let $A_s$ be the set of constituents of
$\chi_{s}$. For $z\in H^1(F,\Zalg)$, write $A_z$ for the set of
constituents of $\Gamma_z$. We have $$A_s\cap A_{z}=\{\chi_{s,z}\}.$$
Let $\phi_z$ be a regular linear character of $\Ualg^F$ such that
$\Gamma_z=\Ind_{\Ualg^F}^{\Galg^F}(\phi_z)$.
Since $F$ and $F'$ commute, we have $F'(\Ualg^F)=\Ualg^F$. It follows
$$F'(\Gamma_z)=\Ind_{\Ualg^F}^{\Galg^F}(F'(\phi_z)).$$
Let $t_z\in\Talg$ such that $t_z^{-1}F(t_z)\in z$. Then, for
$u\in\Ualg^F$, one has
\begin{eqnarray*}%{lcl}
F'(\phi_z)(u)&=&\displaystyle{\phi_0(t_z^{-1}F'^{-1}(u)t_z)}\\
&=&\displaystyle{\phi_0(F'^{-1}(F'(t_z)^{-1}uF'(t_z)))}\\
&=&\displaystyle{^{F'(t_z)}\phi_0(u),}
\end{eqnarray*}
because $F'(\phi_0)=\phi_0$. Furthermore,
$$F'(t_z)^{-1}F(F'(t_z))=F'(t_z^{-1}F(t_z)) \in F'(z).$$
Thus, $^{F'(t_z)}\phi_0={^{t_{F'(z)}}\phi_0}$ and
\begin{equation}\label{eqgamma}
F'(\Gamma_z)=\Gamma_{F'(z)}.
\end{equation}
Now, 
Lemma~\ref{ImgDL} implies
$$F'(\chi_{{s}})=\frac{\varepsilon_{{\Galg}}
\varepsilon_{\Cen^\circ_{{\Galg}^*}({s})}}{|W^\circ(s)|}
\sum_{w\in
W^\circ({s})}\varepsilon(w)R_{{\Talg}_{F'(w)}}^{
{\Galg}}(F'^{*-1}({s})).$$
Since
$({\Talg}_{F'(w)})^*={\Talg}_{F'^{*-1}(w)}^*$
(see~\cite[4.3.2]{Carter2}), it follows
\begin{equation}
F'(\chi_s)=\chi_{F'^{*-1}(s)}.
\label{eqchi}
\end{equation}
Relations~(\ref{eqgamma}) and~(\ref{eqchi}) say that
$$F'(A_s)=A_{F'^{*-1}(s)}\quad\textrm{and}\quad
F'(A_z)=A_{F'(z)}.$$
Therefore

\begin{eqnarray*}%{lcl}
\{F'(\chi_{s,z})\}&=&\displaystyle{F'(\{\chi_{s,z}\})=F'(A_s\cap
A_z)}\\
&=&\displaystyle{F'(A_s)\cap
F'(A_z)=A_{F'^{*-1}(s)}\cap
A_{F'(s)}}\\
&=&\displaystyle{\{\chi_{F'^{*-1}(s),F'(z)}\}.}
\end{eqnarray*}
Thus 
\begin{equation}
F'(\chi_{s,z})=\chi_{F'^{*-1}(s),F'(z)}.
\label{eqregulier}
\end{equation}

Since $F'$ acts trivially on the set of roots, $F'$ stabilizes every
$F$-stable parabolic subgroup $\Palg$ containing $\Balg$ and every
$F$-stable Levi complement of $\Palg$. Because $F$ and $F'$ 
commute, $F'$ also stabilizes $\Palg^F$ and $\Lalg^F$. 
Hence, we have
$F'(R_{\Lalg}^{\Galg}(\phi))=R_{\Lalg}^{\Galg}(F'(\phi))$ for every
$\phi\in\Irr(\Lalg^F)$.
To prove that
$F'(^*R_{\Lalg}^{\Galg}(\chi))={^*\!R_{\Lalg}^{\Galg}(F'(\chi))}$ for
every $\chi\in\Irr(\Galg^F)$ it is sufficient to show that we have
$\cyc{F'(^*R_{\Lalg}^{\Galg}(\chi)),F'(\psi)}_{\Lalg^F}=
\cyc{^*R_{\Lalg}^{\Galg}(F'(\chi)),F'(\psi)}_{\Lalg^F}$ for every
$\psi\in\Irr(\Lalg^F)$. We have
\begin{eqnarray*}
{\cyc{F'(^*R_{\Lalg}^{\Galg}(\chi)),F'(\psi)}_{\Lalg^F}}&=&
{\cyc{^*R_{\Lalg}^{\Galg}(\chi),\psi}_{\Lalg^F}}\\
&=&{\cyc{\chi,R_{\Lalg}^{\Galg}(\psi)}_{\Galg^F}}\\
&=&{\cyc{F'(\chi),F'(R_{\Lalg}^{\Galg}(\psi)}_{\Galg^F}}\\
&=&{\cyc{F'(\chi),R_{\Lalg}^{\Galg}(F'(\psi))}_{\Galg^F}}\\
&=&{\cyc{^*R_{\Lalg}^{\Galg}(F'(\chi)),F'(\psi)}_{\Lalg^F}}.
\end{eqnarray*}
Using the definition of the duality functor $D_\Galg$ in
(\ref{eq:dualite}), we deduce that
$F'(D_{\Galg}(\chi))=D_{\Galg}(F'(\chi))$. Applying~$D_{\Galg}$ to
equation~(\ref{eqregulier}), we get
$$F'(\rho_{s,z})=\rho_{F'^{*-1}(s),F'(z)}.$$
This yields the claim.
\end{proof}

\subsection{Automorphisms and Jordan decomposition}
\label{rk:jordan}
In this subsection, we consider how the action of a Frobenius map $F'$ 
commuting with $F$ on the set of regular characters and the set of
semisimple characters behaves with respect to the Jordan decomposition
of characters. 

For every semisimple $s\in\Galg^{*F^*}$, let
$\mathcal{E}(\Galg^F,s)\subseteq\Irr(\Galg^F)$ be the corresponding
Lusztig series. Note that the Lusztig series give rise to a partition
of~$\Irr(\Galg^F)$; see~\cite[11.2]{BonnafeSLn}. Moreover, For every
semisimple element $s\in\Galg^{*F^*}$, there is a bijection
$$\psi_{s}:\mathcal{E}(C(s)^{F^*},1)\rightarrow\mathcal{E}(\Galg^F,s),$$
where $C(s)=\Cen_{\Galg^*}(s)$; see~\cite[13.23]{DM}. 
When the centralizer $C(s)$ is not connected,~the set
$\mathcal{E}(C(s)^{F^*},1)$ is defined as the set of constituents of
$\Ind_{C(s)^{\circ F^*}}^{C(s)^{F^*}}(\phi)$ where~$\phi$ runs through 
$\mathcal{E}(C(s)^{\circ F^*},1)$. Note that the trivial
character $1_{C(s)^{\circ}}$ and the Steinberg character
$\operatorname{St}_{C(s)^{\circ}}$ of $C(s)^{\circ F^*}$ (we identify
these two characters when~$C(s)$ is a torus) extend to
$C(s)^{F^*}$.
So,~\cite[6.17]{isaacs} implies that the extensions of these
characters are labelled by the irreducible characters of
$C(s)^{F^*}/C(s)^{\circ F^{*}}\simeq A_{\Galg^*}(s)^{F^*}$. For
$\epsilon\in\Irr( A_{\Galg^*}(s)^{F^*})$, let $1_{\epsilon}$
and $\operatorname{St}_{\epsilon}$ be the corresponding extension of
$1_{C(s)^{\circ}}$ and $\operatorname{St}_{C(s)^{\circ}}$,
respectively. Put $\cal
B_s=\{1_{\epsilon},\,\operatorname{St}_{\epsilon}\ |\
\epsilon\in\Irr(A_{\Galg^*}(s)^{F^*}\}$.

Now, we will deduce from Theorem~\ref{imageF} that 
$\psi_s$ can be chosen such that $\psi_s|_{\cal B_s}$ is compatible with the action
of a Frobenius map $F':\Galg\rightarrow\Galg$ commuting with~$F$. This
can be obtained as follows. Let $i:\Galg\rightarrow\widetilde{\Galg}$ be the embedding
as above. Put $\ker'(i^*)=\ker(i^*)\cap
[\widetilde{\Galg}^*,\widetilde{\Galg}^*]$ and let $\cal Z(\Galg)$ be the
component group of the center $\Zalg$ of $\Galg$. Recall that there is an
isomorphism $\hat{\omega}:\cal Z(\Galg)\rightarrow\Irr(\ker'(i^*))$
which induces an isomorphism $\hat{\omega}^0:H^1(F,\Zalg)\rightarrow
\Irr(\ker'(i^*)^{F^{*}})$;
see~\cite[(4.11)]{BonnafeSLn}. Moreover, for every semisimple
$s\in\cal \Galg^{*F^*}$, we define an injective homomorphism
\begin{equation}
\label{phis}
\varphi_s:A_{\Galg^*}(s)\rightarrow
\ker'(i^*),\,g\mapsto
\widetilde{g}\,\widetilde{s}\,\widetilde{g}^{-1}\,\widetilde{s}^{-1},
\end{equation}
where $\widetilde{g}$ and $\widetilde{s}$ are elements of
$\widetilde{\Galg}$ such that $i^*(\widetilde{g})=g$ and
$i^*(\widetilde{s})=s$. This morphism induces a surjective morphism
$\hat{\varphi}_s:\Irr(\ker'(i^*))\rightarrow \Irr(A_{\Galg^*}(s))$.
Note that $\hat{\varphi}_s$ induces a surjective morphism from
$\Irr(\ker'(i^*)^{F^*})$ to $\Irr(A_{\Galg^*}(s)^{F^*})$.
Composing this last morphism with $\hat{\omega}^0$, we obtain a
surjective map  $ \hat{\omega}_s^0:H^1(F,\Zalg)\rightarrow
\Irr(A_{\Galg^*}(s)^{F^*}) $; see~\cite[8.A]{BonnafeSLn} for more
details.
% Thus, for
% $s\in\cal S$ and $\epsilon\in\Irr(A_{\Galg^*}(s)^{F^*})$, we write
% $\rho_{s,\epsilon}$ and $\chi_{s,\epsilon}$ for the corresponding
% characters under $\psi_s$. We remark that these characters are the
% semisimple and the regular characters of $\Galg^F$, respectively.
The Frobenius map $F'$ induces an automorphism on $H^1(F,\Zalg)$, because $F$
and $F'$ commute.  Moreover, $F'^*$ induces an isomorphism from
$A_{\Galg^*}(F'^{*-1}(s))$ to $A_{\Galg^*}(s)$. Thus, by dualizing, we
obtain the following isomorphism
$$F'^*:\Irr\left(A_{\Galg^*}(s)\right) \rightarrow 
\Irr\left(A_{\Galg^*}(F'^{*-1}(s))\right),\,
\phi\mapsto\phi\circ F'^*.$$
Now, consider the diagram:
\begin{equation}
\xymatrix{
   H^1(F,\Zalg) \ar[rrrr]^{\hat{\omega}_s^0} \ar[d]^{F'} &&&&
   \Irr(A_{\Galg^*}(s)^{F^*})\ar[d]^{F'^*}\\
   H^1(F,\Zalg) \ar[rrrr]^ {\hat{\omega}_{F'^{*-1}(s)}^0}&&&&
   \Irr(A_{\Galg^*}(F'^{*-1}(s))^{F^*})  
   } 
\label{diag}
\end{equation}
Fix $z\in H^1(F,\Zalg)$ and $g\in A_{\Galg^*}(F'^{*-1}(s))^{F^*}$.
Equation~(\ref{phis}) implies
$$\varphi_s(F'^*(g))=F'^*\left(\varphi_{F'^{*-1}(s)}(g)\right).$$
Then one has
\begin{eqnarray*}
F'^*\left(\hat{\omega}^0_s(z)\right)(g)&=&\hat{\omega}_s^0(z)(F'^*(g))\\
&=&\hat{\omega}^0(z)\circ\varphi_s(F'^{*}(g))\\
&=&\hat{\omega}^0(z)\left(F'^*\left(\varphi_{F'^{*-1}(s)}(g)\right)\right)\\
&=&\omega^0\left(F'^*(\varphi_{F'^{*-1}(s)}(g))\right)(z)\\
&=&\omega^0\left(\varphi_{F'^{*-1}(s)}(g)\right)(F'(z))
\end{eqnarray*}
Here, $\omega^0$ is defined as in \cite[4.11]{BonnafeSLn}, and 
the last equality comes from~\cite[4.10]{BonnafeSLn}. It follows that 
$$F'^*\left(\hat{\omega}_s^0(z)\right)=\hat{\omega}_{F'^{*-1}(s)}^0\left(F'(z)\right).$$
Hence, diagram~(\ref{diag}) is commutative.
Now, we define $\psi_s$ on $\cal B_s$ by setting
$$\psi_{s}(1_{\hat{\omega}_{s}^0(z)})=
\rho_{s,z}\quad\textrm{and}\quad
\psi_{s}(\operatorname{St}_{\hat{\omega}_{s}^0(z)})=
\chi_{s,z}.$$
Note that, by~\cite[11.13]{BonnafeSLn}, we have
$\rho_{s,z}=\rho_{s,z'}$ (resp. $\chi_{s,z}=\chi_{s,z'}$) if and only
if $z'z^{-1}\in\ker(\hat{\omega}_{s}^0)$. Thus
$\psi_{s}|_{\cal B_{s}}$ is well-defined. Put
$$\Psi:\bigcup_{s}\cal
B_s\rightarrow\Irr(\Galg^F),\,1_{\hat{\omega}_{s}^0(z)}\mapsto
\rho_{s,z},\,\operatorname{St}_{\hat{\omega}_{s}^0(z)}\mapsto\chi_{s,z},$$
where $s$ runs through a set of representatives for the geometric
conjugacy classes of semisimple elements of $\Galg^{*F^*}$.
The commutativity of diagram~(\ref{diag}) and 
Theorem~\ref{imageF} imply that $\Psi$ is $F'$-equivariant, as
required.

%%%%%%%%%%%%%%%%%%%%%%%%%%%%%%%%%%%%%%%%%%%%%%%%%%%%%%%%%%%%%%%%%%%%%%%%%%%%%%%%

\section{Characters of Borel subgroups}\label{partborel}

We continue to use the setup from Section~\ref{nota}. Additionally, we
assume that $\Galg$ is simple and that $\widetilde{\Galg}^F$ is not
one of the groups $B_m(2), C_m(2), G_2(2), G_2(3), F_4(2)$.
Let $F':\Galg\rightarrow\Galg$ be a Frobenius map of $\Galg$ commuting
with $F$ such that $\Talg$ and~$\Balg$ are $F'$-stable as in
Section~\ref{partie1}. 
%We suppose that $F'$ acts trivially on $\Phi$. 
In this section, we consider the action of $\widetilde{\Talg}^F$ and
the Frobenius morphism~$F'$ on the set of irreducible $p'$-characters
of the Borel subgroup~$\Balg^F$.  

\subsection{$p'$-characters of the Borel subgroup and automorphisms}

Let $\Ualg_0$ be the subgroup defined in equation (\ref{eq:etoile}) and
$\Ualg_1 = \Ualg/\Ualg_0$, see also equation (\ref{eq:etoilequo}). 
Set $$\Balg_0=\Ualg_1\rtimes\Talg.$$ The group 
$\Balg_0$ is an algebraic group with a rational structure over $\F_q$
given by the Frobenius map $F$. Since $\widetilde{\Galg}^F$ is not one
of the groups listed at the beginning of this section,~\cite[Lemma
7]{howlett} implies that $\Ualg_0^F$ is the derived subgroup
of~$\Ualg^F$, so the sets $\Irr_{p'}(\Balg^F)$ and $\Irr(\Balg_0^F)$ are
$\mathcal{A}$-equivalent, where $\mathcal{A}$ denotes the set of 
automorphisms of $\Balg_0^F$ leaving $\Ualg_1^F$ and $\Talg^F$
invariant. 
%Note that~$\mathcal{A}$ acts on $\Balg_0^F$ and $\Ualg_1^F$
%because $\Ualg_1^{F}$ is invariant under all automorphisms of~$\mathcal{A}$.

Let $\widetilde{\Omega}$ and $\Omega$ be the sets of
$\widetilde{\Talg}^F$-orbits and $\Talg^F$-orbits on
$\Irr(\Ualg_1^F)$, respectively.
For $J = \{\alpha_{j_1}, \alpha_{j_2}, \dots, \alpha_{j_l}\} \subseteq
\Delta$, set
\begin{eqnarray*}
\Ualg_J & := & \{ x_{\alpha_{j_1}}(u_1) x_{\alpha_{j_2}}(u_2) \cdots
x_{\alpha_{j_l}}(u_l) \, | \, u_1, \dots,
u_l \in \overline{\F}_q \} \text{   and}\\
\Ualg_J^* & := & \{ x_{\alpha_{j_1}}(u_1) x_{\alpha_{j_2}}(u_2) \cdots
x_{\alpha_{j_l}}(u_l) \, | \, u_1, \dots,
u_l \in \overline{\F}_q^\times \}.
\end{eqnarray*}
Let $\Irr^*(\Ualg_J^F)$ be the set of all $\chi \in \Irr(\Ualg_J^F)$
such that $\Res^{\Ualg_J^F}_{\Xalg_\alpha^F}(\chi)$ is non trivial for
all $\alpha \in J$. By extending every $\phi \in \Irr(\Ualg_J^F)$
trivially, we can identify $\Irr^*(\Ualg_J^F)$ with a subset of
$\Irr(\Ualg_1^F)$ in a natural way. With this identification, we have
\begin{equation}\label{eq:decorb}
\Irr(\Ualg_1^F) = \bigsqcup_{J \subseteq \Delta} \Irr^*(\Ualg_J^F),
\end{equation}
and each $\Irr^*(\Ualg_J^F)$ is invariant under the action of
$\widetilde{\Talg}^F$, $\Talg^F$ and $F$.
In the proof of \cite[5.1]{Br8} it is shown that the irreducible
characters of $\Balg_0^F$ (or equivalently, the $p'$-characters of
$\Balg^F$) can be labelled as follows. We can parametrize the elements
of $\widetilde{\Omega}$ by the subsets of $\Delta$. More precisely, 
for $J\subseteq \Delta$, the corresponding
$\widetilde{\Talg}^F$-orbit is $\Irr^*(\Ualg_J^F)$. Fix
$\phi_J\in\Irr^*(\Ualg_J^F)$ and choose $t_j\in\widetilde{\Talg}^F$
such that the characters $\phi_{J,j}={}^{t_j}\phi_J$ form a set of
representatives for the $\Talg^F$-orbits of $\Irr^*(\Ualg_J^F)$, where
we choose $t_1=1$. Then the $\Talg^F$-orbit of $\phi_{J,j}$ will be
denoted by $\omega_{J,j}$. Since $\omega_{J,j}$ and $\omega_{J,1}$ are
conjugate by an element of $\widetilde{\Talg}^F$, the size
$|\omega_{J,j}|$ does not depend on $j$. Furthermore
\begin{equation}\label{eq:unionorb}
\Irr^*(\Ualg_J^F)=\bigsqcup_{j=1}^{i(J)}\omega_{J,j}\quad \textrm{where
}i(J)=|\Irr^*(\Ualg_J^F)|/|\omega_{J,1}|.
\end{equation}
\begin{remark}\label{rk:choix}
Note that if $\Lalg_J$ denotes the standard Levi subgroup corresponding to $J$,
the proof of~\cite[5.2]{Br8} shows that there is an element 
 $t_{z_j}\in\Talg$ for some $z_j\in H^1(F,Z(\Lalg_J))$ satisfying
$t_{z_j}^{-1}F(t_{z_j})\in z_j$ and
\begin{equation}
\label{eq:defph}
\phi_{J,j}=^{t_j}\!\phi_J=^{t_{z_j}}\!\phi_J.
\end{equation}
% we can choose for $t_j$ the
% element $t_{z_j}\in\Talg$ for some $z_j\in H^1(F,Z(\Lalg_J))$ such that
% $t_{z_j}^{-1}F(t_{z_j})Z(\Lalg_J) \in z_j$. We always make this choice
% in the following.
% 
\end{remark}
Let $I_{\phi_{J,j}}=\Ualg^F\rtimes \Cen_{\Talg^F}(\phi_{J,j})$ be
the inertia subgroup of $\phi_{J,j}$ in $\Balg_0^F$. 
Note that~$\phi_{J,j}$ extends to $I_{\phi_{J,j}}$ by setting, for
$u\in\Ualg^F$ and $t\in\Cen_{\Talg^F}(\phi_{J,j})$,
\begin{equation}\label{eq:hat}
\hat{\phi}_{J,j}(ut)=\phi_{J,j}(u).
\end{equation}
Thus, the irreducible characters of $\Irr(\Balg_0^F)$ are the characters
$\Ind_{I_{\phi_{J,j}}}^{\Balg_0^F}(\hat{\phi}_{J,j}\otimes\psi)$
for $J$ and $j$ as above and $\psi\in\Irr(\Cen_{\Talg^F }(\phi_{J,j}))$.

We now will describe more precisely the group $\Cen_{\Talg^F
}(\phi_{J,j})$.
As usual, we write $(X(\Talg),Y(\Talg), \Phi, \Phi^\vee)$ for the root 
datum corresponding to
$(\Galg, \Talg)$ in the sense of \cite[Theorem 3.17]{DM}. In
particular, $X(\Talg)$ is the character group and $Y(\Talg)$ the
cocharacter group of $\Talg$. Choose $\Z$-bases
$b = \{b_1, \dots, b_r\}$ and $b' = \{b_1', \dots, b_r'\}$ of
$X(\Talg)$ and $Y(\Talg)$ respectively, such that $b$ and $b'$ are
dual to each other with respect to the natural pairing,
see \cite[Section~1.9]{Carter2}. By \cite[Proposition~3.1.2]{Carter2},
we have $\Talg \simeq Y \otimes_{\Z} \overline{\F}_q^\times$ as abelian
groups. Every element of $Y \otimes_{\Z} \overline{\F}_q^\times$ can be
written uniquely as $\sum_{i=1}^r b_i' \otimes t_i$ with
$t_i \in \overline{\F}_q^\times$ and we write $(t_1, t_2, \dots, t_r)$
for the corresponding element of $\Talg$. Note that
$|b|=|b'|=|\Delta|$ because $\Galg$ is simple. 

Since $\Phi \subset X(\Talg)$, we can write every element of
$\Phi$ as a $\Z$-linear combination of~$b$ and can define a matrix
$A = (a_{ij}) \in \Z^{r \times r}$ as follows: Let the $i$th row of
$A$ consist of the coefficients of the simple root $\alpha_i$ written
as a linear combination of~$b$. For a simple root
$\alpha_i \in \Delta$, the action of a torus element 
$t = (t_1, t_2, \dots, t_r) \in \Talg$ on $\Xalg_{\alpha_i}$ is given by  
\begin{equation} \label{eq:toronuni}
^tx_{\alpha_i}(u)=x_{\alpha_i}\left(u \cdot \prod_{j=1}^r t_j^{a_{ij}}\right).
\end{equation}
Fix $J = \{\alpha_{j_1}, \alpha_{j_2}, \dots, \alpha_{j_l}\}\subseteq \Delta$
as above. For $x \in \Ualg_J^*$, we have 
\begin{equation}\label{eq:Tj}
\Talg_J := \Stab_{\Talg} (x) = 
\left\{t\in\Talg\,|\,\prod_{j=1}^r t_j^{a_{j_k,j}}=1 \text{   for
} k = 1, \dots, l \right\}.
\end{equation}
In particular, the stabilizers in $\Talg$ of all $x \in \Ualg_J^*$ 
coincide. Note that the stabilizer $\Talg_J$ is an $F$-stable diagonalizable group.
According to~\cite[13.2.5(1)]{Springer}, $\Talg_J^\circ$ is a split subtorus of $\Talg$ and
$\Talg_J=\Talg_J^\circ\times H_J$, where $H_J$ is a finite group isomorphic
to the torsion group of $X(\Talg_J)$.
Note that $\Talg_J^{\circ}$ and $H_J$ are $F$-stable and
$\Talg_J^F=\Talg_J^{\circ F}\times H_J^F$.

\begin{lemma}
With the above notation, for $J\subseteq \Delta$, the group $\Talg_J^F$
is the centralizer in $\Talg^F$ of all irreducible characters in
$\Irr^*(\Ualg_J^F)$. Write $\phi_{J,j}$ for the character of $\Irr^*(\Ualg_J^F)$
in the $\Talg^F$-orbit $\omega_{J,j}$ constructed from $\phi_J$ as in equation (\ref{eq:defph}),
and let $i(J)$ be as in
equation~(\ref{eq:unionorb}). Set
$$\chi_{J,j,\psi}=\Ind_{\Ualg^F\rtimes
\Talg_J^F}^{\Balg_0^F}(\hat{\phi}_{J,j}\otimes \psi),$$
where $\hat{\phi}_{J,j}$ is the extension of $\phi_{J,j}$ defined in
equation~(\ref{eq:hat}). Then, one has
$$\Irr(\Balg_0^F)=\left\{\chi_{J,j,\psi}\,|\,J\subseteq \Delta,\,1\leq j\leq
i(J),\,\psi\in\Irr(\Talg_J^F)\right\}.$$
\label{charB}
\end{lemma}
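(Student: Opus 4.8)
The plan is to combine the Clifford-theoretic description of $\Irr(\Balg_0^F)$ recalled just before the lemma with the explicit computation of the stabilizer $\Talg_J = \Stab_{\Talg}(x)$ for $x \in \Ualg_J^*$ in equations~(\ref{eq:Tj}) and~(\ref{eq:toronuni}). First I would verify that $\Cen_{\Talg^F}(\phi_{J,j}) = \Talg_J^F$ for every $j$. Recall that a torus element $t$ fixes the character $\phi_{J,j}$ of $\Ualg_1^F$ if and only if its action permutes the $\Xalg_\alpha^F$-components of $\phi_{J,j}$ trivially; since $\phi_{J,j}\in\Irr^*(\Ualg_J^F)$ has nontrivial restriction to each $\Xalg_\alpha^F$ for $\alpha\in J$ and trivial restriction elsewhere, and since $t$ acts on $\Xalg_{\alpha_i}$ by the scalar $\prod_j t_j^{a_{ij}}$ by~(\ref{eq:toronuni}), the condition $^t\phi_{J,j}=\phi_{J,j}$ is exactly $\prod_j t_j^{a_{j_k,j}}=1$ for all $k$, i.e.\ $t\in\Talg_J$. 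This is the same condition independently of $j$, which is why the stabilizers of all characters in $\Irr^*(\Ualg_J^F)$ coincide; intersecting with $\Talg^F$ gives $\Talg_J^F$. This establishes the first assertion and simultaneously identifies the inertia subgroup $I_{\phi_{J,j}} = \Ualg^F\rtimes\Talg_J^F$.

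Next I would feed this into the Clifford theory already set up in the excerpt. Since $\Ualg^F\trianglelefteq\Balg_0^F$ and $\phi_{J,j}$ extends to its inertia subgroup $I_{\phi_{J,j}}$ via the formula~(\ref{eq:hat}) for $\hat\phi_{J,j}$, Gallagher's theorem (\cite[6.17]{isaacs}) gives that the irreducible characters of $I_{\phi_{J,j}}$ lying over $\phi_{J,j}$ are precisely $\hat\phi_{J,j}\otimes\psi$ for $\psi\in\Irr(\Talg_J^F)$ (here $\psi$ is inflated from $I_{\phi_{J,j}}/\Ualg^F\simeq\Talg_J^F$). By Clifford correspondence, $\Ind_{I_{\phi_{J,j}}}^{\Balg_0^F}$ is a bijection from this set onto $\Irr(\Balg_0^F\,|\,\phi_{J,j})$; hence $\Irr(\Balg_0^F\,|\,\phi_{J,j}) = \{\chi_{J,j,\psi}\,|\,\psi\in\Irr(\Talg_J^F)\}$.

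Finally I would assemble the global statement. Restriction to $\Ualg_1^F$ partitions $\Irr(\Balg_0^F)$ into the sets $\Irr(\Balg_0^F\,|\,\omega)$ as $\omega$ runs over the $\Talg^F$-orbits $\Omega$ on $\Irr(\Ualg_1^F)$, and by~(\ref{eq:decorb}) together with~(\ref{eq:unionorb}) these orbits are exactly the $\omega_{J,j}$ for $J\subseteq\Delta$ and $1\le j\le i(J)$, with $\phi_{J,j}$ a chosen representative. Taking the union of the descriptions above over all $(J,j)$ yields $\Irr(\Balg_0^F) = \{\chi_{J,j,\psi}\,|\,J\subseteq\Delta,\,1\le j\le i(J),\,\psi\in\Irr(\Talg_J^F)\}$, as claimed. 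The only point requiring care — the main (mild) obstacle — is checking that the stabilizer condition in~(\ref{eq:Tj}) really coincides with the character-stabilizer condition for each $\phi_{J,j}$ rather than just for $\phi_J$; this follows because $\phi_{J,j}={}^{t_j}\phi_J$ with $t_j\in\widetilde\Talg^F$ and conjugation by $\widetilde\Talg^F$ normalizes $\Talg$ and acts on the $\Xalg_\alpha$ by scalars, so it does not change which torus elements act trivially on the components, i.e.\ $\Cen_{\Talg}({}^{t_j}\phi_J)=\Cen_{\Talg}(\phi_J)=\Talg_J$.
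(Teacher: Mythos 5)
Your proposal is correct and is essentially the paper's own argument: the paper disposes of the lemma with the single line ``This is just Clifford theory,'' relying on exactly the ingredients you spell out (the stabilizer computation from (\ref{eq:toronuni})--(\ref{eq:Tj}), the extension $\hat\phi_{J,j}$ from (\ref{eq:hat}), Gallagher, and the Clifford correspondence summed over the $\Talg^F$-orbits $\omega_{J,j}$). Your only addition is the explicit check that the stabilizer of a character in $\Irr^*(\Ualg_J^F)$ coincides with the stabilizer $\Talg_J$ of an element of $\Ualg_J^*$, which the paper leaves implicit; that check is correct since a nontrivial additive character of $\Xalg_\alpha^F$ is fixed by the scalar action of $t$ exactly when $\alpha(t)=1$.
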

\begin{proof}
This is just Clifford theory.
\end{proof}

\begin{remark}\label{rk:centrelevi}
Note that $\Talg_J$ is the center of the Levi subgroup $\Lalg_J$. In
particular, one has $H_J\simeq\cal Z(\Lalg_J)$, where $\cal
Z(\Lalg_J)=Z(\Lalg_J)/Z(\Lalg_J)^{\circ}$.
\end{remark}

\begin{remark}\label{rk:charBFrob}
We will make some choices in the labelling of the irreducible
characters of $\Balg_0^{F}$ given in Lemma~\ref{charB}. In the
following, if $J\subseteq\Delta$ is $F'$-stable (implying that $F'$
acts on $\Irr^*(\Ualg_J^F)$), the character $\phi_J$ of
$\Irr^*(\Ualg_J^F)$ used for the parametrization of $\omega_{J,j}$ in
Lemma~\ref{charB} will be chosen $F'$-stable, which is possible by
Lemma~\ref{unistable}.  
\end{remark}

\begin{lemma}\label{deforb}
For $J\subseteq \Delta$, $\psi\in\Irr(\Talg_J^F)$ and $i = i(J)$ as in
equation~(\ref{eq:unionorb}), the set
$$D_{J,\psi,i}:=\{\chi_{J,j,\psi}\in\Irr(\Balg_0^{F})\,|\,1\leq j\leq
i\}$$
is a $\widetilde{\Talg}^F$-orbit of $\Irr(\Balg_0^F)$ of size $i$
and all $\widetilde{\Talg}^F$-orbits of $\Irr(\Balg_0^F)$ of size
$i$ arise in this way.
\end{lemma}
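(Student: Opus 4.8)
The plan is to use the description of $\Irr(\Balg_0^F)$ from Lemma~\ref{charB} together with Clifford theory for the normal subgroup $\Talg^F \trianglelefteq \widetilde{\Talg}^F$ (both acting on $\Irr(\Balg_0^F)$ via conjugation). First I would recall from Lemma~\ref{charB} and the construction preceding it that for a fixed $J\subseteq\Delta$ and $\psi\in\Irr(\Talg_J^F)$, the characters $\chi_{J,1,\psi},\dots,\chi_{J,i(J),\psi}$ are obtained from $\chi_{J,1,\psi}$ by conjugating with the elements $t_j\in\widetilde{\Talg}^F$ chosen so that $\phi_{J,j}={}^{t_j}\phi_J$ run through representatives of the $\Talg^F$-orbits on $\Irr^*(\Ualg_J^F)$. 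Since $\widetilde{\Talg}^F$ permutes the $\Talg^F$-orbits $\omega_{J,1},\dots,\omega_{J,i(J)}$ transitively (indeed $\widetilde{\Omega}$ is parametrized so that $\Irr^*(\Ualg_J^F)$ is a single $\widetilde{\Talg}^F$-orbit, hence a single orbit on the set of $\Talg^F$-orbits it contains) and since $\widetilde{\Talg}^F$ normalizes $\Ualg^F\rtimes\Talg_J^F$ and $\Talg_J^F$ (as $\widetilde\Talg^F$ centralizes $\Talg^F \supseteq \Talg_J^F$, being abelian), conjugation by $t_j$ sends $\chi_{J,1,\psi}$ to $\chi_{J,j,\psi}$. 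Therefore $D_{J,\psi,i}$ is exactly the $\widetilde\Talg^F$-orbit of $\chi_{J,1,\psi}$, and its size is $i=i(J)$ because the $\chi_{J,j,\psi}$ are pairwise distinct (they restrict to distinct $\Talg^F$-orbits $\omega_{J,j}$ of linear characters of $\Ualg^F$, via the inflation $\Ualg_1^F$).

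Next I would prove the converse: every $\widetilde\Talg^F$-orbit of size $i$ arises as some $D_{J,\psi,i}$. Take an arbitrary $\chi\in\Irr(\Balg_0^F)$; by Lemma~\ref{charB} it equals $\chi_{J,j,\psi}$ for some $J$, $j$, $\psi$, and by the previous paragraph its $\widetilde\Talg^F$-orbit is $D_{J,\psi,i(J)}$, of size $i(J)$. Hence if this orbit has size $i$ then $i(J)=i$ and the orbit is $D_{J,\psi,i}$, as claimed. This direction is essentially bookkeeping once the first paragraph is in place.

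The one point requiring genuine care — and the main obstacle — is verifying that the $\widetilde\Talg^F$-orbit of $\chi_{J,j,\psi}$ stays \emph{inside} the family $\{\chi_{J,j',\psi}\}_{j'}$ with the same $J$ and the same $\psi$, i.e. that conjugating by $\widetilde\Talg^F$ does not change the $\Talg_J^F$-character parameter $\psi$. This follows because $\widetilde\Talg^F$ centralizes $\Talg_J^F\subseteq\Talg^F$, so for $t\in\widetilde\Talg^F$ one has ${}^t(\hat\phi_{J,j}\otimes\psi) = {}^t\hat\phi_{J,j}\otimes\psi = \hat\phi_{J,j'}\otimes\psi$ where $\phi_{J,j'}={}^t\phi_{J,j}$, using that the extension formula~(\ref{eq:hat}) is compatible with $\widetilde\Talg^F$-conjugation (both sides depend only on the $\Ualg^F$-component and on $\phi_{J,j}$) and that $\widetilde\Talg^F$ normalizes $\Ualg^F\rtimes\Talg_J^F$. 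Inducing up to $\Balg_0^F$ then gives ${}^t\chi_{J,j,\psi}=\chi_{J,j',\psi}$. Combined with the transitivity of $\widetilde\Talg^F$ on $\{\omega_{J,1},\dots,\omega_{J,i(J)}\}$ and the fact that $\Talg^F$ acts trivially on $D_{J,\psi,i}$ (each $\chi_{J,j,\psi}$ is already $\Talg^F$-invariant, being induced from the full inertia subgroup in $\Balg_0^F$, which contains $\Talg^F$ — wait, it contains $\Ualg^F\rtimes\Talg_J^F$, and $\Talg^F\trianglelefteq\Balg_0^F$ normalizes each such induced character), we conclude the orbit is precisely $D_{J,\psi,i(J)}$.
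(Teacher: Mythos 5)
Your proposal is correct and follows essentially the same route as the paper's proof: conjugation by $t\in\widetilde{\Talg}^F$ preserves $\Ualg^F\rtimes\Talg_J^F$, is compatible with the extension $\hat{\phi}_J$, fixes $\psi$ because $\widetilde{\Talg}^F$ centralizes $\Talg_J^F$, and acts transitively on $\Irr^*(\Ualg_J^F)$, so the orbit of $\chi_{J,1,\psi}$ is exactly $\{\chi_{J,j,\psi}\}_{1\le j\le i(J)}$. The only cosmetic difference is that the paper chooses $j$ so that ${}^t\phi_J$ is merely $\Talg^F$-conjugate to the representative $\phi_{J,j}$, a point you handle implicitly by observing that conjugation by $\Talg^F\subseteq\Balg_0^F$ fixes the induced characters and $\psi$.
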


\begin{proof}
Fix $t\in\widetilde{\Talg}^F$ and let $j$ be the integer such that
${}^t\phi_J$ and $\phi_{J,j}$ are $\Talg^F$-conjugate. Note that
$\Ualg^F\rtimes\Talg_J^F$ is $\widetilde{\Talg}^F$-invariant, implying
that for $\psi\in\Irr(\Talg_J^F)$
$${^t}\chi_{J,1,\psi}=\Ind_{\Ualg^F\rtimes\Talg_J^F}^{\Balg_0^F}\left({}^t
\hat{\phi}_J\otimes{}^t\psi\right).$$
Furthermore, ${}^t\hat{\phi}_J=\widehat{ {}^t\phi}_J$ and $\Talg_J^F$ is the
inertia subgroup of ${}^t{\phi}_J$ in $\Talg^F$ (because $\Talg_J^F$ is
the inertia subgroup of any character of $\Ualg_J^F$). Moreover,
$\Talg_J^F$ and $\widetilde{\Talg}^F$ commute, so ${}^t\psi=\psi$ and
then $${}^t\chi_{J,1,\psi}=\chi_{J,j,\psi}.$$
Conversely, ${}^{t_j}\chi_{J,1,\psi}=\chi_{J,j,\psi}$ and the result follows.
\end{proof}

The following lemma describes the action of the Frobenius morphism
$F'$ on the set of $\widetilde{\Talg}^F$-orbits on $\Irr(\Balg_0^F)$.

\begin{lemma}We assume that the convention of
Remark~\ref{rk:charBFrob} holds.
Let $F':\Galg\rightarrow\Galg$ be a Frobenius map commuting with $F$ 
such that $\Talg$ and $\Balg$ are $F'$-stable.
For an $F'$-stable $J\subseteq\Delta$, an integer $i = i(J)$ as in 
equation~(\ref{eq:unionorb}) and $\psi\in\Irr(\Talg_J^F)$, 
one has
$$F'(D_{J,\psi,i})=D_{J,F'(\psi),i}.$$
More precisely, for every $1\leq j\leq i$, we have
$$F'(\chi_{J,j,\psi})=\chi_{J,j',F'(\psi)},$$
where $j'$ is such that $z_{j'}=F'(z_j)$ for $z_j\in
H^1(F',Z(\Lalg_{J}))$ as in Remark~\ref{rk:choix}.
\label{la:FrobchrB}
\end{lemma}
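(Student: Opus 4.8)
The plan is to compute $F'(\chi_{J,j,\psi})$ directly from the induction formula defining $\chi_{J,j,\psi}$, using that $F'$ commutes with $F$ and that all the relevant subgroups are $F'$-stable. First I would note that since $J$ is $F'$-stable and $\Talg$, $\Balg$ are $F'$-stable, the group $\Ualg^F\rtimes\Talg_J^F$ is $F'$-stable (the stabilizer $\Talg_J$ is defined by the root-theoretic equations in~(\ref{eq:Tj}), which are preserved since $F'$ acts on $\Delta$ fixing $J$ setwise; and $F'$ commutes with $F$), so that $F'$ carries induced characters to induced characters:
$$F'(\chi_{J,j,\psi})=F'\bigl(\Ind_{\Ualg^F\rtimes\Talg_J^F}^{\Balg_0^F}(\hat\phi_{J,j}\otimes\psi)\bigr)=\Ind_{\Ualg^F\rtimes\Talg_J^F}^{\Balg_0^F}\bigl(F'(\hat\phi_{J,j})\otimes F'(\psi)\bigr).$$
Here I use that $F'$ fixes $\Balg_0^F$ and that the tensor decomposition of the character of $\Ualg^F\rtimes\Talg_J^F$ into the $\Ualg^F$-part and the $\Talg_J^F$-part is respected by $F'$, since $F'$ stabilizes both $\Ualg^F$ and $\Talg_J^F$ (recall $\hat\phi_{J,j}$ is the canonical extension in~(\ref{eq:hat}), trivial on the $\Talg_J^F$-part, and $\psi$ is inflated from $\Talg_J^F$).

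The heart of the argument is to identify $F'(\hat\phi_{J,j})$, equivalently $F'(\phi_{J,j})$, as a member of the list $\{\phi_{J,j'}\}$ and to pin down which index $j'$ occurs. By Remark~\ref{rk:charBFrob} the base character $\phi_J\in\Irr^*(\Ualg_J^F)$ is chosen $F'$-stable. By~(\ref{eq:defph}) in Remark~\ref{rk:choix} we have $\phi_{J,j}={}^{t_{z_j}}\!\phi_J$ for some $t_{z_j}\in\Talg$ with $t_{z_j}^{-1}F(t_{z_j})\in z_j\in H^1(F',Z(\Lalg_J))$ — wait, here one should read the cohomology with respect to $F$, so $z_j\in H^1(F,Z(\Lalg_J))$. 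Now I compute, exactly as in the proof of Theorem~\ref{imageF} (the computation around equation~(\ref{eqgamma})): for $u\in\Ualg_1^F$,
$$F'(\phi_{J,j})(u)=\phi_{J,j}(F'^{-1}(u))={}^{t_{z_j}}\!\phi_J\bigl(F'^{-1}(u)\bigr)=\phi_J\bigl(t_{z_j}^{-1}F'^{-1}(u)t_{z_j}\bigr)=\phi_J\bigl(F'^{-1}(F'(t_{z_j})^{-1}\,u\,F'(t_{z_j}))\bigr),$$
and since $F'(\phi_J)=\phi_J$ this equals ${}^{F'(t_{z_j})}\phi_J(u)$. Moreover $F'(t_{z_j})^{-1}F(F'(t_{z_j}))=F'(t_{z_j}^{-1}F(t_{z_j}))\in F'(z_j)$, so $F'(t_{z_j})$ plays the role of $t_{F'(z_j)}$, and therefore $F'(\phi_{J,j})={}^{t_{F'(z_j)}}\phi_J=\phi_{J,j'}$ where $j'$ is characterized by $z_{j'}=F'(z_j)$. (One must check that this $F'(z_j)$ does correspond to one of the chosen representatives $z_{j'}$, i.e.\ that $F'$ permutes the $\Talg^F$-orbits $\omega_{J,j}$; this follows because $F'$ acts on $\Irr^*(\Ualg_J^F)$ commuting with the $\Talg^F$-action and the orbits $\omega_{J,j}$ are exactly the $\Talg^F$-orbits inside $\Irr^*(\Ualg_J^F)$, using the bijection of Remark~\ref{rk:choix} between these orbits and $H^1(F,Z(\Lalg_J))$.) Plugging this back gives $F'(\chi_{J,j,\psi})=\chi_{J,j',F'(\psi)}$, and summing over $j$ yields $F'(D_{J,\psi,i})=D_{J,F'(\psi),i}$ by Lemma~\ref{deforb}.

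The main obstacle is the bookkeeping in the middle step: one has to be careful that $F'(\psi)$ is again a character of $\Talg_J^F$ (true since $\Talg_J^F$ is $F'$-stable) and that the canonical extension $\hat\phi_{J,j}$ is genuinely $F'$-equivariant in the sense $F'(\hat\phi_{J,j})=\widehat{F'(\phi_{J,j})}=\hat\phi_{J,j'}$ — this is immediate from the explicit formula~(\ref{eq:hat}) once we know $F'$ stabilizes both $\Ualg^F$ and $\Cen_{\Talg^F}(\phi_{J,j})=\Talg_J^F$, but it is worth stating. The one genuinely delicate point is the compatibility of the chosen representatives: we need that the assignment $j\mapsto j'$ determined by $z_{j'}=F'(z_j)$ is well-defined, i.e.\ independent of the auxiliary element $t_{z_j}$, which follows from the last sentence of Remark~\ref{rk:choix}-style reasoning (different lifts differ by $H^1(F,Z(\Lalg_J))$-trivial elements, and $F'$ respects this) together with~\cite[11.13]{BonnafeSLn}-type uniqueness. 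Everything else is the same commuting-Frobenius manipulation already used for $\Gamma_z$ in Theorem~\ref{imageF}.
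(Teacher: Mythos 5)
Your proposal is correct and follows essentially the same route as the paper's own proof: use the $F'$-stability of $\phi_J$ (Remark~\ref{rk:charBFrob}) and the twisting elements $t_{z_j}$ of Remark~\ref{rk:choix} to show $F'(\phi_{J,j})=\phi_{J,j'}$ with $z_{j'}=F'(z_j)$, then transport this through the induction defining $\chi_{J,j,\psi}$. Your additional bookkeeping (the explicit Frobenius computation, the $F'$-equivariance of the extension $\hat\phi_{J,j}$, and the correction that $z_j$ lives in $H^1(F,Z(\Lalg_J))$ rather than $H^1(F',Z(\Lalg_J))$) only makes explicit what the paper leaves implicit.
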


\begin{proof}Note first that $F'$ permutes the
$\widetilde{\Talg}^F$-orbits of $\Irr(\Balg_0^F)$ because
$F'(\widetilde{\Talg}^F)=\widetilde{\Talg}^F$.
Let $\chi_{J,j,\psi}\in D_{J,\psi,i}$.
Then $F'$ acts on $\Irr(\Ualg_J^F)$ and fixes $\phi_J$ (see
Remark~\ref{rk:charBFrob}). In particular, $\Talg_J^F$ is $F'$-stable.
Let $t_{z_j}\in\Talg$ be such that $\phi_{J,j}={^{t_{z_j}}\phi_J}$ and
$z_j\in H^1(F,Z(\Lalg_J))$. Since $\phi_J$ is $F'$-stable, one has
$$F'(\phi_{J,j})={^{F'(t_{z_j})}\phi_J}={^{t_{F'(z_j)}}\phi_J}=\phi_{J,j'},$$
where $j'$ is such that $F'(z_j)=z_{j'}$.
It follows
$$F'(\chi_{J,j,\psi})=\Ind_{\Ualg^F\Talg_J^F}^{\Balg^F}
\left(\hat{\phi}_{J,j'}\otimes F'(\psi)\right)=\chi_{J,j',F'(\psi)},$$
because $F'(\hat{\phi}_{J,j})=\hat{\phi}_{J,j'}$
The result follows.
\end{proof}

\subsection{Characters of the Borel subgroup and central characters}
\begin{lemma} \label{la:IrrPprimeBnu}
Let $\nu \in \Irr(Z(\Galg^F))$ and write $d := |Z(\Galg^F)|$. Then
\[
|\Irr_{p'}(\Balg^F | \nu) | = \frac{1}{d} 
|\Irr_{p'}(\Balg^F)|.
\]
% \[
% |\Irr_{p'}(\Balg^F | \nu) | = \frac 1 d \cdot 
% \sum_{J \subseteq \Delta} |H_J^F|^2 \cdot (q-1)^{|\Delta| - |J|}.
% \]
\end{lemma}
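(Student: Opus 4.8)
The plan is to transfer the count from $\Balg^F$ to the smaller group $\Balg_0^F$ and then apply a standard argument with linear characters. As recalled before Lemma~\ref{charB} (cf.\ \cite[5.1]{Br8}), under our hypotheses $\Ualg_0^F$ is the derived subgroup of $\Ualg^F$; since $\Ualg_0^F$ is normal in $\Balg^F$ and $\Balg^F/\Ualg_0^F\cong\Balg_0^F$, inflation along the quotient map $\pi\colon\Balg^F\to\Balg_0^F$ is a bijection $\Irr(\Balg_0^F)\to\Irr_{p'}(\Balg^F)$. Now $Z(\Galg^F)=\Zalg^F$ lies in $\Talg^F$, hence meets $\Ualg_0^F$ trivially, so $\pi$ identifies it with a subgroup $\bar Z$ of $\Balg_0^F$; moreover $\bar Z$ centralizes $\Ualg_1^F$ (because $\Zalg=Z(\Galg)$ centralizes $\Ualg$) and lies in $\Talg^F$, so $\bar Z\subseteq Z(\Balg_0^F)$ and $|\bar Z|=|Z(\Galg^F)|=d$. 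Letting $\bar\nu\in\Irr(\bar Z)$ be the character corresponding to $\nu$ under $\pi$, one checks directly that for $\chi\in\Irr(\Balg_0^F)$ the inflation $\pi^*\chi$ satisfies $\Res^{\Balg^F}_{Z(\Galg^F)}(\pi^*\chi)=\chi(1)\cdot\nu$ exactly when $\Res^{\Balg_0^F}_{\bar Z}(\chi)=\chi(1)\cdot\bar\nu$. Hence $|\Irr_{p'}(\Balg^F|\nu)|=|\Irr(\Balg_0^F|\bar\nu)|$, and it remains to prove $|\Irr(\Balg_0^F|\bar\nu)|=\frac1d|\Irr(\Balg_0^F)|$.

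For this I would show that $|\Irr(\Balg_0^F|\bar\nu)|$ does not depend on $\bar\nu\in\Irr(\bar Z)$. Since $\Ualg_1^F$ is normal in $\Balg_0^F=\Ualg_1^F\rtimes\Talg^F$ with quotient $\Talg^F$, every $\mu\in\Irr(\Talg^F)$ inflates to a linear character $\hat\mu$ of $\Balg_0^F$, and $\chi\mapsto\chi\otimes\hat\mu$ permutes $\Irr(\Balg_0^F)$ (with inverse $\chi\mapsto\chi\otimes\widehat{\mu^{-1}}$). Because $(\chi\otimes\hat\mu)|_{\bar Z}=\chi|_{\bar Z}\cdot(\mu|_{\bar Z})$, this permutation carries $\Irr(\Balg_0^F|\bar\nu)$ bijectively onto $\Irr(\Balg_0^F|\bar\nu\cdot(\mu|_{\bar Z}))$. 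The restriction map $\Irr(\Talg^F)\to\Irr(\bar Z)$ is surjective, since every character of a subgroup of a finite abelian group extends; hence $\mu|_{\bar Z}$ runs over all of $\Irr(\bar Z)$, and consequently all the sets $\Irr(\Balg_0^F|\bar\nu)$, $\bar\nu\in\Irr(\bar Z)$, have the same cardinality.

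Finally, as $\bar Z$ is central in $\Balg_0^F$ we obtain the partition $\Irr(\Balg_0^F)=\bigsqcup_{\bar\nu\in\Irr(\bar Z)}\Irr(\Balg_0^F|\bar\nu)$ into $|\Irr(\bar Z)|=|\bar Z|=d$ blocks of equal size, so $|\Irr(\Balg_0^F|\bar\nu)|=\frac1d|\Irr(\Balg_0^F)|=\frac1d|\Irr_{p'}(\Balg^F)|$, which together with the reduction of the first paragraph yields the lemma. I do not expect a genuine obstacle; the only point requiring a little care is the compatibility with central characters in the first paragraph, namely that the identification $\Irr(\Balg_0^F)\cong\Irr_{p'}(\Balg^F)$ is realized by inflation along $\Balg^F\to\Balg_0^F$ and therefore leaves the restriction to $\Zalg^F$ undisturbed.
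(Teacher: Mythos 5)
Your argument is correct and is essentially the paper's: the heart of both proofs is that $\Zalg^F\subseteq\Talg^F$ is central, that linear characters of the Borel (inflated from the abelian group $\Talg^F$) restrict to \emph{all} of $\Irr(\Zalg^F)$, and that twisting by such characters permutes the relevant character sets, so the partition by central characters has blocks of equal size. The only difference is your preliminary transfer to $\Balg_0^F$ via inflation, which is harmless but unnecessary -- the paper twists directly inside $\Irr_{p'}(\Balg^F)$ by extensions of $\nu$ to $\Balg^F$, since multiplication by a linear character preserves degrees and hence the $p'$-condition.
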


\begin{proof}
Write $\Zalg := Z(\Galg)$. With this notation, we have
\[
\Irr_{p'}(\Balg^F | \nu) = \{ \chi \in \Irr_{p'}(\Balg^F) \,
| \, \Res_{\Zalg^F}^{\Balg^F}(\chi) = \chi(1) \cdot \nu\}
\]
and $\Irr_{p'}(\Balg^F) = \sqcup_{\nu \in \Irr(\Zalg^F)} \Irr_{p'}(\Balg^F | \nu)$.
Let $\nu, \nu' \in \Irr(\Zalg^F)$. We can extend $\nu, \nu'$ to linear 
characters of $\Balg^F$ and denote these extensions also by
$\nu$ and $\nu'$, respectively. The map $\Irr_{p'}(\Balg^F
| \nu) \rightarrow \Irr_{p'}(\Balg^F | \nu')$,
$\chi \mapsto \chi \cdot \overline{\nu} \cdot \nu'$ is a bijection,
where $\overline{\nu}$ is the complex-conjugate of $\nu$.
So, $|\Irr_{p'}(\Balg^F)| = |\Zalg^F| \cdot |\Irr_{p'}(\Balg^F
| \nu)|$ and the claim follows.
\end{proof}

\begin{lemma}Let $J$ be a subset of $\Delta$, $\psi\in\Irr(\Talg_J^F)$
and $\chi_{J,j,\psi}\in\Irr(\Balg_0^F)$ as Lemma~\ref{charB}.
For $\nu\in\Irr(\Zalg^{F})$, one has
$$\cyc{\chi_{J,j,\psi},\Ind_{\Zalg^{F}}^{\Balg_0^{F}}(\nu)}_{\Balg_0^{F}}\neq
0 \Longleftrightarrow  \cyc{\Res_{\Zalg^{F}}^{\Talg_J^F}(\psi),\nu}_{\Zalg^{F}}\neq
0.$$
\end{lemma}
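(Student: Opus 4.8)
The plan is to compute the scalar product on the left-hand side by Frobenius reciprocity and then unravel the induced character $\chi_{J,j,\psi}$ using its two-step construction. First I would rewrite
\[
\cyc{\chi_{J,j,\psi},\Ind_{\Zalg^{F}}^{\Balg_0^{F}}(\nu)}_{\Balg_0^{F}}
=\cyc{\Res^{\Balg_0^F}_{\Zalg^F}(\chi_{J,j,\psi}),\nu}_{\Zalg^F},
\]
so everything reduces to understanding $\Res^{\Balg_0^F}_{\Zalg^F}(\chi_{J,j,\psi})$. Since $\chi_{J,j,\psi}=\Ind_{\Ualg^F\rtimes\Talg_J^F}^{\Balg_0^F}(\hat\phi_{J,j}\otimes\psi)$ and $\Zalg^F$ is a central (hence normal) subgroup of $\Balg_0^F$, I would like to pull the restriction inside the induction. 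The key point is that $\Zalg\subseteq\Talg_J$ for every $J\subseteq\Delta$: indeed $\Zalg=Z(\Galg)$ acts trivially on all root subgroups $\Xalg_\alpha$, so by~(\ref{eq:Tj}) it lies in $\Stab_\Talg(x)=\Talg_J$ for each such $x$; equivalently $\Zalg\subseteq Z(\Lalg_J)=\Talg_J$ by Remark~\ref{rk:centrelevi}. Consequently $\Zalg^F\subseteq\Ualg^F\rtimes\Talg_J^F$, and $\Zalg^F$ is central in $\Balg_0^F$, so by Mackey's formula (every double coset is a single coset, since $\Zalg^F$ is normal and contained in the subgroup) we get
\[
\Res^{\Balg_0^F}_{\Zalg^F}\bigl(\Ind_{\Ualg^F\rtimes\Talg_J^F}^{\Balg_0^F}(\hat\phi_{J,j}\otimes\psi)\bigr)
=[\Balg_0^F:\Ualg^F\rtimes\Talg_J^F]\cdot\Res^{\Ualg^F\rtimes\Talg_J^F}_{\Zalg^F}(\hat\phi_{J,j}\otimes\psi).
\]

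Next I would identify this last restriction. By the definition~(\ref{eq:hat}) of the extension $\hat\phi_{J,j}$, we have $\hat\phi_{J,j}(ut)=\phi_{J,j}(u)$ for $u\in\Ualg^F$, $t\in\Talg_J^F$, so on $\Talg_J^F$ the character $\hat\phi_{J,j}\otimes\psi$ restricts simply to $\psi$ (the value of $\hat\phi_{J,j}$ at $t\in\Talg_J^F$ being $\phi_{J,j}(1)=1$). Restricting further to $\Zalg^F\subseteq\Talg_J^F$ gives $\Res^{\Talg_J^F}_{\Zalg^F}(\psi)$. Therefore
\[
\Res^{\Balg_0^F}_{\Zalg^F}(\chi_{J,j,\psi})=\chi_{J,j,\psi}(1)\cdot\Res^{\Talg_J^F}_{\Zalg^F}(\psi)/\psi(1),
\]
and since $\psi$ may have degree $>1$ it is cleaner to simply say that $\Res^{\Balg_0^F}_{\Zalg^F}(\chi_{J,j,\psi})$ is a positive multiple of $\Res^{\Talg_J^F}_{\Zalg^F}(\psi)$. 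Plugging this back, the scalar product $\cyc{\Res^{\Balg_0^F}_{\Zalg^F}(\chi_{J,j,\psi}),\nu}_{\Zalg^F}$ is a positive multiple of $\cyc{\Res^{\Talg_J^F}_{\Zalg^F}(\psi),\nu}_{\Zalg^F}$, which gives exactly the claimed equivalence (both sides vanish or are nonzero simultaneously).

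I do not expect a serious obstacle here; the one point that needs care is the inclusion $\Zalg\subseteq\Talg_J$ and the resulting clean behaviour of Mackey's formula for the normal subgroup $\Zalg^F$, and then keeping track that $\hat\phi_{J,j}$ is \emph{trivial} on $\Talg_J^F$ (not just on $\Zalg^F$) by~(\ref{eq:hat}), so that the tensor factor $\hat\phi_{J,j}$ contributes nothing to the restriction. Everything else is bookkeeping with Frobenius reciprocity.
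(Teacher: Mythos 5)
Your proposal is correct and follows essentially the same route as the paper: compute $\Res_{\Zalg^F}^{\Balg_0^F}(\chi_{J,j,\psi})$ directly from the induction formula at central elements (the paper cites the definition of induced characters where you invoke Mackey, which amounts to the same computation since $\Zalg^F\subseteq\Ualg^F\rtimes\Talg_J^F$ is central), obtaining a positive multiple of $\Res_{\Zalg^F}^{\Talg_J^F}(\psi)$, and conclude by Frobenius reciprocity. The only superfluous hedge is the worry that $\psi$ might have degree $>1$: since $\Talg_J^F=\Talg_J^{\circ F}\times H_J^F$ is abelian, $\psi$ is linear and the restriction is exactly $\chi_{J,j,\psi}(1)\cdot\Res_{\Zalg^F}^{\Talg_J^F}(\psi)$, as in the paper.
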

\begin{proof}
By the definition of induced characters \cite[(5.1)]{isaacs}, we have
$$\Res_{\Zalg^{F}}^{\Balg_0^{F}}(\chi_{J,j,\psi})
= \chi_{J,j,\psi}(1) \cdot \Res_{\Zalg^{F}}^{\Talg_J^F}(\psi)$$ and the
claim follows from Frobenius reciprocity.
\end{proof}

%%%%%%%%%%%%%%%%%%%%%%%%%%%%%%%%%%%%%%%%%%%%%%%%%%%%%%%%%%%%%%%%%%%%%%%%%%%%%%%%

\section{Equivariant bijections}\label{partie2}

We use the notation and setup from the previous sections. In
particular, we have an embedding
$i:\Galg\rightarrow\widetilde{\Galg}$ where $\widetilde{\Galg}$ is a
connected reductive group defined over $\F_q$ with connected center
and Frobenius map $F$, such that $i(\Galg)$ is the derived subgroup
of $\widetilde{\Galg}$. 
Let $\widetilde{\Talg}$ be the $F$-stable maximal torus of 
$\widetilde{\Galg}$ containing $\Talg$
and $\widetilde{\Balg}$ a Borel subgroup of $\widetilde{\Galg}$
containing $\Talg$ and $\Balg$. 
The quotient $\widetilde{\Talg}^F/\Talg^F=\Talg'^F$ acts
on~$\Galg^F$. Let $D$ be the subgroup of $\Out(\Galg^F)$ induced by
this action. We will assume throughout this whole section that the
following hypothesis is satisfied. 

\begin{hypothese}\label{hypoG}
Let $\Galg$ be as above. Assume that
\begin{itemize}
\item The algebraic group $\Galg$ is simple.
\item The prime $p$ is nonsingular for $\Galg$.
% group $\widetilde{\Galg}^F$ is not one of the
% groups:\newline 
% $B_m(2)$, $C_m(2)$, $G_2(2)$, $G_2(3)$, $F_4(2)$, $^2B_2(2)$,
% $^2G_2(3)$, $^2F_4(2)$.
\item The automorphism induced by $F$ on $W$ is trivial.
\item The order $d := |D|$ is a prime number.% distinct from $p$.
\end{itemize}
\end{hypothese}

\begin{remark}Note that if Hypothesis \ref{hypoG} holds, then $d\neq p$.
\end{remark}

In this section, we are going to construct bijections between
the sets $\Irr_{p'}(\Galg^F|\nu)$ and $\Irr_{p'}(\Balg^F|\nu)$ for
fixed $\nu\in\Irr(\Zalg^F)$, which are compatible with the action of
certain groups of automorphisms.

Recall that $\Galg$ is generated by the elements $x_{\alpha}(t)$
where $\alpha\in\Phi$ and $t\in\overline{\F}_p$ (because $\Galg$ is
simple). Moreover, since $F$ acts trivially on $W$, we can choose $x_{\alpha}$
such that  
$$F(x_{\alpha}(t))=
x_{\alpha}(t^q)\quad\textrm{for all
}\alpha\in\Phi\ \textrm{and }t\in\overline{\F}_p.$$
We then define a bijective algebraic group homomorphism 
$F_0:\Galg\rightarrow\Galg$ satisfying
$F_0(x_{\alpha}(t))=x_{\alpha}(t^p)$ for all
$\alpha\in\Phi$ and $t\in\overline{\F}_p$.
Note that the map $F_0$ defines an $\F_p$-rational structure on $\Galg$.
Moreover, if $q=p^n$ for a positive integer $n$, then $F_0^n=F$. 

\subsection{Automorphisms of $\Galg^F$} \label{auto}

Since $F_0$ and $F$ commute, we have $F_0(\Galg^F)=\Galg^F$.
Thus, $F_0$ induces an automorphism of $\Galg^F$, also denoted by
$F_0$ in the following. We set $K=\cyc{F_0}\subseteq\Out(\Galg^F)$.
Note that the finite group $K$ has order $n$ and is the group of field
automorphisms of~$\Galg^F$, see~\cite[12.2]{Carter1}.   

We write $A\subseteq\Out(\Galg^F)$ for the subgroup of $\Out(\Galg^F)$
generated by $K$ and $D$.
Note that, by construction, the groups $\Talg^F$, $\widetilde{\Talg}^F$,
$\Balg^F$, $\widetilde{\Balg}^F$, $\Ualg^F$, $\Ualg_1^F$ are
$K$-stable, $D$-stable and then $A$-stable.
The group $A$ acts on $\Irr_{p'}(\Galg^F)$ and
$\Irr_{p'}(\Balg^F)$. For every subgroup $H\subseteq\Out(\Galg^F)$, we
denote by $\mathcal{O}_H$ and $\mathcal{O}_H'$ the set of $H$-orbits on
$\Irr_{p'}(\Galg^F)$ and $\Irr_{p'}(\Balg^F)$, respectively.

In this whole Section~\ref{partie2}, let $\nu$ denote a linear
character of $\Zalg^F$ and let $A_{\nu}$ be the subgroup of $A$ fixing
$\nu$. Note that $D$ acts trivially on $\Zalg^F$. Then 
\begin{equation}\label{eq:Anu}
A_{\nu}=D\rtimes K_{\nu}, 
\end{equation}
where $K_\nu$ is the subgroup of field automorphisms fixing $\nu$.
Then the group $A_{\nu}$ acts on 
$\Irr_{p'}(\Galg^F|\nu)$ and $\Irr_{p'}(\Balg^F|\nu)$.
For every subgroup $H$ of $A_\nu$, let $\mathcal{O}_{H,\nu}$ and
$\mathcal{O}_{H,\nu}'$ be the set of $H$-orbits on
$\Irr_{p'}(\Galg^F|\nu)$ and $\Irr_{p'}(\Balg^F|\nu)$, respectively.  

\begin{lemma}\label{KD}
The group $K$ acts on the sets $\mathcal{O}_D$ and
$\mathcal{O}_D'$. And the group $K_\nu$ acts on the sets
$\mathcal{O}_{D,\nu}$ and $\mathcal{O}_{D,\nu}'$. 
\end{lemma}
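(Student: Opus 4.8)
The statement is that $K$ permutes the set $\mathcal O_D$ of $D$-orbits on $\Irr_{p'}(\Galg^F)$, and that $K_\nu$ permutes $\mathcal O_{D,\nu}$, with the analogous facts for the primed sets coming from the Borel subgroup. The natural approach is to observe that $D$ is a \emph{normal} subgroup of $A$, so that the induced action of $A$ on $\Irr_{p'}(\Galg^F)$ descends to an action of $A/D$ on the set of $D$-orbits. Since $K$ maps onto $A/D$ (as $A$ is generated by $K$ and $D$), this gives the desired $K$-action on $\mathcal O_D$. The first step is therefore to record that $D \trianglelefteq A$: recall that $D$ is induced by conjugation by $\widetilde{\Talg}^F$, that $K = \langle F_0 \rangle$ normalizes $\widetilde{\Talg}^F$ (stated in the paragraph introducing $A$), and that $F_0$ normalizes $\Galg^F$, so conjugation by $F_0$ sends a diagonal automorphism $\operatorname{ad}(t)$ with $t \in \widetilde{\Talg}^F$ to $\operatorname{ad}(F_0(t))$, which is again diagonal; hence $K$ normalizes $D$ inside $\Out(\Galg^F)$ and $A = D \rtimes K$ (up to the usual identifications), in particular $D \trianglelefteq A$.

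Given $D \trianglelefteq A$, the second step is purely formal: whenever a group $A$ acts on a set $S$ and $D \trianglelefteq A$, the partition of $S$ into $D$-orbits is $A$-stable, so $A$ — and hence any subgroup or any set of coset representatives — acts on the set $\mathcal O_D$ of $D$-orbits by $a \cdot (D\!\cdot\! x) = D \!\cdot\! (a x)$. Applying this with $S = \Irr_{p'}(\Galg^F)$ (resp. $S = \Irr_{p'}(\Balg^F)$, using that $A$ stabilizes $\Balg^F$ and hence acts on its $p'$-characters) and restricting the $A$-action to the subgroup $K$ gives the action of $K$ on $\mathcal O_D$ and $\mathcal O_D'$.

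For the statements involving $\nu$, the third step is to pass to the stabilizer. By \eqref{eq:Anu} we have $A_\nu = D \rtimes K_\nu$, since $D$ acts trivially on $\Zalg^F$ and so fixes $\nu$; in particular $D \trianglelefteq A_\nu$ and $D$ stabilizes the subset $\Irr_{p'}(\Galg^F|\nu)$. Now repeat the formal argument of the previous step with $A$ replaced by $A_\nu$ and $S$ replaced by $\Irr_{p'}(\Galg^F|\nu)$ (resp. $\Irr_{p'}(\Balg^F|\nu)$): the $D$-orbit partition of $\Irr_{p'}(\Galg^F|\nu)$ is $A_\nu$-stable, and restricting to $K_\nu$ gives the action of $K_\nu$ on $\mathcal O_{D,\nu}$ and $\mathcal O_{D,\nu}'$.

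I do not expect a serious obstacle here; the only point requiring any care is the verification that $K$ normalizes $D$ (equivalently, that a field automorphism conjugates a diagonal automorphism to a diagonal automorphism), which follows from $F_0(\widetilde{\Talg}^F) = \widetilde{\Talg}^F$. Everything else is the elementary observation that a normal subgroup's orbit partition is invariant under the ambient group action, together with the fact that $D$ acts trivially on $Z(\Galg^F)$ so that $A_\nu = D \rtimes K_\nu$ contains $D$ as a normal subgroup stabilizing the relevant $\nu$-isotypic subsets.
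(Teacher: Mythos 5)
Your proposal is correct and follows essentially the same route as the paper, which simply invokes $D \trianglelefteq A$ (citing the literature for this normality) and notes that the statements about $K_\nu$ and the $\nu$-isotypic subsets then follow at once. You merely spell out the normality of $D$ in $A$ and the formal orbit-partition argument in more detail, which is fine.
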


\begin{proof}
The first statement follows from $D \unlhd A$,
see \cite[2.5.14]{sol}. The second statement is then clear.
\end{proof}

\subsection{A $D$-equivariant bijection respecting central characters} 

The following lemma describes a sufficient condition for the existence 
of a $D$-equivariant bijection between $\Irr_{p'}(\Galg^F|\nu)$ and
$\Irr_{p'}(\Balg^F|\nu)$. In the subsequent remark we then show that
this condition is satisfied if the relative McKay conjecture is true
for $\widetilde{\Galg}^F$ and $\Galg^F$ at the prime $p$.

\begin{lemma}\label{la:diagequivcentral}
Suppose that Hypothesis~\ref{hypoG} is
satisfied and assume that
$$|\Irr_{p'}(\widetilde{\Galg}^F|\nu)|=|\Irr_{p'}(\widetilde{\Balg}^F|\nu)| 
\text{   and   } |\Irr_{p'}(\Galg^F|\nu)|=|\Irr_{p'}(\Balg^F|\nu)|.$$ 
Then there is a $D$-equivariant
bijection between $\Irr_{p'}(\Galg^F|\nu)$ and $\Irr_{p'}(\Balg^F|\nu)$.
\end{lemma}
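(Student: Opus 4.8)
The plan is to reduce the existence of a $D$-equivariant bijection to a counting statement on $D$-orbits, using that $d=|D|$ is prime so that every $D$-orbit on $\Irr_{p'}(\Galg^F|\nu)$ (resp.\ $\Irr_{p'}(\Balg^F|\nu)$) has size $1$ or $d$. Concretely, write $\Irr_{p'}(\Galg^F|\nu)$ as the union of the set $F_{\Galg}$ of $D$-fixed characters and the set of characters lying in orbits of size $d$; similarly decompose $\Irr_{p'}(\Balg^F|\nu)$ into $F_{\Balg}$ and the rest. A $D$-equivariant bijection exists if and only if $|F_{\Galg}|=|F_{\Balg}|$ (the size-$d$ orbits can then be matched arbitrarily, since all have the same size, using that the two sets have equal cardinality by hypothesis, which forces the number of size-$d$ orbits to agree once the fixed-point counts agree). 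So the whole problem becomes: show $|F_{\Galg}| = |F_{\Balg}|$.

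First I would identify the $D$-fixed characters on the $\Galg^F$-side. By Lemma~\ref{parasemi}, $\Irr_{p'}(\Galg^F|\nu)$ consists of the semisimple characters $\rho_{s,z}$ with the appropriate central character, and $D=\widetilde{\Talg}^F/\Talg^F$ permutes the constituents of a fixed $\rho_s$ transitively with $|\widetilde{\Galg}^F/\widetilde{\Galg}^F(s)|=|A_{\Galg^*}(s)^{F^*}|$ of them (Subsection~\ref{regsemi}). Hence $\rho_{s,z}$ is $D$-fixed exactly when $\rho_s$ is irreducible, i.e.\ when $A_{\Galg^*}(s)^{F^*}=1$; these are precisely the restrictions of the semisimple characters $\rho_{\widetilde s}$ of $\widetilde{\Galg}^F$ that remain irreducible. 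Counting these with central character $\nu$ should match, via restriction from $\widetilde{\Galg}^F$, a subset of $\Irr_{p'}(\widetilde{\Galg}^F|\nu)$. On the $\Balg^F$-side I would use the parametrization of Lemma~\ref{charB}: $\Irr_{p'}(\Balg^F)\cong\Irr(\Balg_0^F)$ consists of the $\chi_{J,j,\psi}$, and by Lemma~\ref{deforb} the $\widetilde{\Talg}^F$-orbits are the sets $D_{J,\psi,i(J)}$ of size $i(J)$. Thus $\chi_{J,j,\psi}$ is $D$-fixed iff $i(J)=1$, i.e.\ iff $|\Irr^*(\Ualg_J^F)|=|\omega_{J,1}|$, equivalently the $\widetilde{\Talg}^F$- and $\Talg^F$-orbits on $\Irr^*(\Ualg_J^F)$ coincide, which by Remark~\ref{rk:centrelevi} happens iff $H_J\simeq\mathcal Z(\Lalg_J)$ has trivial contribution at the $F$-fixed-point level in the relevant sense. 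The $D$-fixed characters of $\Balg^F$ with central character $\nu$ should then be in bijection with a subset of $\Irr_{p'}(\widetilde{\Balg}^F|\nu)$, by the analogous Clifford-theory description for $\widetilde{\Balg}^F$.

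The key step is then to run the \emph{same} dichotomy simultaneously on $\widetilde{\Galg}^F$ and $\widetilde{\Balg}^F$: since the center of $\widetilde{\Galg}$ is connected, all of $\Irr_{p'}(\widetilde{\Galg}^F|\nu)$ and $\Irr_{p'}(\widetilde{\Balg}^F|\nu)$ restrict irreducibly (up to the trivial $D$-action on the tilde-groups, $D$ acts trivially there), and the hypothesis $|\Irr_{p'}(\widetilde{\Galg}^F|\nu)|=|\Irr_{p'}(\widetilde{\Balg}^F|\nu)|$ is exactly the statement we want transported down. The bridge is: restriction $\Irr_{p'}(\widetilde{\Galg}^F|\nu)\to\Irr_{p'}(\Galg^F|\nu)$ has fibers of size $|A_{\Galg^*}(s)^{F^*}|$ over the $\widetilde{\Galg}^F$-orbit of $\rho_{s}$, and the $D$-orbit of any constituent $\rho_{s,z}$ has the \emph{same} size $|A_{\Galg^*}(s)^{F^*}|$; so $|\Irr_{p'}(\Galg^F|\nu)| = |\Irr_{p'}(\widetilde{\Galg}^F|\nu)|$ automatically recovers the stated hypothesis and, more importantly, $D$-fixed downstairs $\Leftrightarrow$ came from a point with $A_{\Galg^*}(s)^{F^*}=1$, i.e.\ from a character of $\widetilde{\Galg}^F$ whose restriction is already irreducible. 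An identical analysis, using $\widetilde{\Talg}^F/\Talg^F$ acting on $\Irr(\Balg_0^F)$ with orbit sizes $i(J)$ equal to the ramification index of the induced character $\chi_{J,j,\psi}|_{\text{down}}$ coming from $\widetilde{\Balg}^F$, gives $D$-fixed downstairs $\Leftrightarrow$ came from $\widetilde{\Balg}^F$ irreducibly. So $|F_{\Galg}| = |\{\widetilde\chi\in\Irr_{p'}(\widetilde{\Galg}^F|\nu) : \Res_{\Galg^F}\widetilde\chi \text{ irreducible}\}|$ and likewise for $\Balg$; and the complementary characters (non-$D$-fixed) are grouped into $D$-orbits all of size $d$, with the total counts equal by hypothesis. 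Subtracting, the number of size-$d$ orbits agrees iff $|F_{\Galg}|=|F_{\Balg}|$.

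I expect the main obstacle to be the last equality $|F_{\Galg}|=|F_{\Balg}|$: showing that a character $\widetilde\chi$ of $\widetilde{\Galg}^F$ restricts irreducibly to $\Galg^F$ if and only if the matched character of $\widetilde{\Balg}^F$ restricts irreducibly to $\Balg^F$. The natural strategy is to make the bijection between $\Irr_{p'}(\widetilde{\Galg}^F|\nu)$ and $\Irr_{p'}(\widetilde{\Balg}^F|\nu)$ (whose \emph{existence} is hypothesized) actually \emph{preserve} the ramification index over $\Galg^F$ vs.\ $\Balg^F$ — i.e.\ preserve the relevant finite group $A_{\Galg^*}(s)^{F^*}$, which on the Borel side corresponds to $H_J^F\simeq\mathcal Z(\Lalg_J)^{F}$ via Remark~\ref{rk:centrelevi}. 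This is where one has to work: one needs a bijection on the tilde-level that is simultaneously compatible with the central character $\nu$ \emph{and} with these ramification data. I would either (a) construct such a refined bijection directly from the parametrizations of Sections~\ref{partie1} and~\ref{partborel} — noting that on both sides the $D$-orbit structure is governed by cohomology $H^1(F,\cdot)$ of centers of Levi subgroups — or (b) partition both $\Irr_{p'}(\widetilde{\Galg}^F|\nu)$ and $\Irr_{p'}(\widetilde{\Balg}^F|\nu)$ according to the isomorphism type (or at least the order) of that finite group and check the hypothesis forces agreement of each block, then match blockwise and lift. Option (b) is cleaner if the only invariant that matters is the \emph{order} $|A_{\Galg^*}(s)^{F^*}|$, which is what controls orbit sizes; since $d$ is prime this order is either coprime to $d$ (orbit stays a point or splits into $d$ orbits, but a prime order forces the ``split'' case to actually have $d \mid |A_{\Galg^*}(s)^{F^*}|$) — so in fact only the divisibility ``$d \mid |A_{\Galg^*}(s)^{F^*}|$'' matters, which is a cleaner invariant to track through the tilde-level bijection.
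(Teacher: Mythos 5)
Your opening reduction is fine: with $d=|D|$ prime and the total counts equal, a $D$-equivariant bijection exists as soon as the numbers of $D$-fixed characters (equivalently, of size-$1$ orbits) on the two sides agree, and this is also how the paper organizes its proof. But your route to that equality has a genuine gap, in two places. First, a concrete error: for a constituent $\rho_{s,z}$ of $\rho_s$, the number of characters of $\widetilde{\Galg}^F$ lying over it (all of which lie in $\Irr_{p'}(\widetilde{\Galg}^F|\nu)$) is $(q-1)/|A_{\Galg^*}(s)^{F^*}|$, not $|A_{\Galg^*}(s)^{F^*}|$: the quotient $\widetilde{\Galg}^F/\Galg^F\simeq\widetilde{\Talg}^F/\Talg^F$ is abelian of order $q-1$, the inertia group of $\rho_{s,z}$ has index $|A_{\Galg^*}(s)^{F^*}|$, and $\rho_s$ is multiplicity free. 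Consequently your intermediate claim $|\Irr_{p'}(\Galg^F|\nu)|=|\Irr_{p'}(\widetilde{\Galg}^F|\nu)|$ is false in general, and the hypothesis about $\widetilde{\Galg}^F$ and $\widetilde{\Balg}^F$ is not ``automatically recovered''; it is independent information that the proof must actually use.

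Second, and more importantly, you never prove $|F_{\Galg}|=|F_{\Balg}|$: you defer it to a hoped-for bijection at the tilde level preserving ramification data, or to a blockwise comparison that ``the hypothesis forces''. The hypothesis is only a pair of cardinality equalities, so neither of these is available without further argument --- establishing that blockwise agreement \emph{is} the content of the lemma. The paper closes exactly this gap with a double count that the corrected fiber computation makes possible: downstairs $|\Irr_{p'}(\Galg^F|\nu)|=|N_1(\nu)|+d\,|N_d(\nu)|$ and $|\Irr_{p'}(\Balg^F|\nu)|=|N_1'(\nu)|+d\,|N_d'(\nu)|$, while upstairs every size-$1$ orbit is covered by exactly $q-1$ characters of $\widetilde{\Galg}^F$ over $\nu$ and every size-$d$ orbit by exactly $(q-1)/d$ of them, with the analogous count for $\widetilde{\Balg}^F$ coming from the $\widetilde{\Talg}^F$-orbit analysis of Section~\ref{partborel}; hence $|\Irr_{p'}(\widetilde{\Galg}^F|\nu)|=(q-1)|N_1(\nu)|+\frac{q-1}{d}|N_d(\nu)|$ and likewise with primes. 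The two hypothesized equalities then give the linear system $(|N_1(\nu)|-|N_1'(\nu)|)+d(|N_d(\nu)|-|N_d'(\nu)|)=0$ and $d(|N_1(\nu)|-|N_1'(\nu)|)+(|N_d(\nu)|-|N_d'(\nu)|)=0$, which forces $|N_1(\nu)|=|N_1'(\nu)|$ and $|N_d(\nu)|=|N_d'(\nu)|$ since $d>1$. Your proposal stops exactly where this counting step is needed; no refined equivariant bijection at the tilde level (which would amount to later, harder results in the paper) is required for this lemma.
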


\begin{proof}
Fix $\nu\in\Irr(\Zalg^F)$. Since $|D| = d$ is prime, the $D$-orbits on
$\Irr_{p'}(\Galg^F|\nu)$ and $\Irr_{p'}(\Balg^F|\nu)$ have size $1$ or $d$. 
For $i\in\{1,d\}$, let $N_i(\nu)$ (resp. $N'_i(\nu)$) be the set of
$D$-orbits on $\Irr_{p'}(\Galg^F|\nu)$ (resp. $\Irr_{p'}(\Balg^F|\nu)$) of size $i$.  
Hence, we have
$$|\Irr_{p'}(\Galg^F|\nu)|=|N_1(\nu)|+d|N_d(\nu)|\quad\textrm{and}\quad
|\Irr_{p'}(\Balg^F|\nu)|=|N'_1(\nu)|+d|N'_d(\nu)|.$$
Let $\omega\in\mathcal{O}_{D,\nu}$. Then there is a semisimple element
$s$ of $\Galg^{*F^*}$, such that $\omega$ is the set of the
constituents of $\rho_s$. Let $\widetilde{s}$ be a semisimple element
of $\widetilde{\Galg}^{*F^*}$ satisfying $i^*(\widetilde{s})=s$.
Then $\rho_{\widetilde{s}}\in\Irr_{p'}(\widetilde{\Galg}^F|\nu)$ and 
every $\widetilde{s}'$ restricting to $\rho_s$ also lies in
$\Irr_{p'}(\widetilde{\Galg}^F|\nu)$. Let $\chi \in \omega$ be an
irreducible constituent of $\rho_s$. Since $\Talg'^F$ is
abelian of order $q-1$ and~$\rho_s$ is multiplicity free, Clifford
theory~\cite[Problem (6.2)]{isaacs} implies
$$|\Irr(\widetilde{\Galg}^F|\rho_s)|=\frac{q-1}{|A_{\Galg^*}(s)^{F^*}|}.$$

Similarly, the results in Section~\ref{partborel} on the action of
$\widetilde{\Talg}^F$ and $\Talg^F$ on  
$\Irr(\Ualg_1^F)$ imply that, for every $\omega\in\mathcal{O}'_{D,\nu}$,
there are exactly $(q-1)/|\omega|$ irreducible characters 
$\widetilde{\chi} \in \Irr_{p'}(\widetilde{\Balg}|\nu)$ such that
$\omega$ is the set of constituents of 
$\Res_{\Balg^F}^{\widetilde{\Balg}^F}(\widetilde{\chi})$. 
%Note that every element of $\widetilde{\Talg}^F$ inducing an inner
%automorphism of $\Galg^F$ also induces an inner automorphism of
%$\Balg^F$ since $N_{\Galg^F}(\Balg^F) = \Balg^F$.
So,
\begin{eqnarray*}
|\Irr_{p'}(\widetilde{\Galg}^F|\nu)| & = &
 (q-1)|N_1(\nu)|+\frac{q-1}{d}|N_d(\nu)|\quad\textrm{and}\\
|\Irr_{p'}(\widetilde{\Balg}^F|\nu)| & = &
 (q-1)|N'_1(\nu)|+\frac{q-1}{d}|N'_d(\nu)|.
\end{eqnarray*}
By assumption,
$|\Irr_{p'}(\widetilde{\Galg}^F|\nu)|=|\Irr_{p'}(\widetilde{\Balg}^F|\nu)|$
and $|\Irr_{p'}(\Galg^F|\nu)|=|\Irr_{p'}(\Balg^F|\nu)|$. So, we can deduce that
$$\left\{
\begin{array}{lcl}
(|N_1(\nu)|-|N'_1(\nu)|)+d(|N_d(\nu)|-|N'_d(\nu)|)&=&0,\\
d(|N_1(\nu)|-|N'_1(\nu)|)+(|N_d(\nu)|-|N'_d(\nu)|)&=&0.\\
\end{array}\right.$$
We can conclude $|N_1(\nu)|=|N'_1(\nu)|$ and $|N_d(\nu)|=|N'_d(\nu)|$. 
Thus, there is a
$D$-equivariant bijection between $\Irr_{p'}(\Galg^F|\nu)$ and
$\Irr_{p'}(\Balg^F|\nu)$ which can be described as follows:
First, for $i\in\{1,d\}$, we choose any bijection
$f_i:N_i(\nu)\rightarrow N'_i(\nu)$. For $\omega\in
N_i(\nu)$, we choose any $x_{\omega}\in\omega$ and any $y_{\omega}\in
f_i(\omega)$.
We set $D=\cyc{\delta}$.
We define $f:\Irr_{p'}(\Galg^F|\nu)\rightarrow\Irr_{p'}(\Balg^F|\nu)$ by
$$f(\delta(x_{\omega}))=\delta(y_{\omega})\quad\textrm{for }\omega\in
\mathcal{O}_{D,\nu}.$$ So, by construction, the map $f$ is a $D$-equivariant
bijection.
% (and every $D$-equivariant bijection could be obtained
% in this way). 
\end{proof}

\begin{remark}
Suppose $\widetilde{\Galg}^F$ satisfies the relative McKay
conjecture at the prime $p$, that is 
$|\Irr_{p'}(\widetilde{\Galg}^F|\widetilde{\nu})|=|\Irr_{p'}(\widetilde{\Balg}^F|\widetilde{\nu})|$
holds for all
$\widetilde{\nu} \in \Irr(Z(\widetilde{\Galg}^F))$. Then, for every
$\nu\in\Irr(\Zalg^F)$, one has
$$|\Irr_{p'}(\widetilde{\Galg}^F|\nu)|=|\Irr_{p'}(\widetilde{\Balg}^F|\nu)|.$$
\end{remark}
\begin{proof}
If
$\Ind_{\Zalg^F}^{Z(\widetilde{\Galg}^F)}(\nu)=\sum_{k=1}^r\widetilde{\nu_i}$, then 
\begin{eqnarray*}
|\Irr_{p'}(\widetilde{\Galg}^F|\nu)|&=&\sum_{k=1}^r
|\Irr_{p'}(\widetilde{\Galg}^F|\widetilde{\nu}_k)|\\
&=&
\sum_{k=1}^r
|\Irr_{p'}(\widetilde{\Balg}^F|\widetilde{\nu}_k)|\\
&=&
|\Irr_{p'}(\widetilde{\Balg}^F|\nu)|.
\end{eqnarray*}
\end{proof}

\subsection{Central characters and automorphisms of $\Galg^F$}

In this subsection, we study the action of the field automorphisms on
the set of $D$-orbits on $\Irr_{p'}(\Galg^F|\nu)$. In fact, we
consider the action of $F$ on the set of $D$-orbits on
$\Irr_{p'}(\Galg^{F^m}|\mu)$ for positive integers $m$ and
$\mu \in \Irr(\Zalg^{F^m})$. 
The following remark will be used to parametrize $F$-stable
semisimple conjugacy classes of $\Galg^{*F^{*m}}$.

\begin{remark}\label{rk:paramcl}
Suppose that $F$ and $F'$ are Frobenius maps on $\Galg$ which commute. 
Let $s$ be an $F^*$- and $F'^*$-stable element of $\Galg^*$. For $\alpha\in
H^1(F^*,A_{\Galg^*}(s))$, we choose $g_{\alpha}\in\Galg^*$ such that the
class of $g_{\alpha}^{-1}F^*(g_{\alpha})$ in $H^1(F^*,A_{\Galg^*}(s))$
equals $\alpha$. Let $s_{\alpha}:=g_{\alpha}sg_{\alpha}^{-1}$. Then
$s_{\alpha}\in\Galg^{*F^*}$ and
$$[s]_{\Galg^*}\cap\Galg^{*F^*}=\bigsqcup_{\alpha\in
H^1(F^*,A_{\Galg^*}(s))}[s_{\alpha}]_{\Galg^{*F^*}}.$$
Moreover, since $s$ is $F'^*$-stable,
one has $F'^*(s_{\alpha})=F'^*(g_{\alpha})sF'^*(g_{\alpha})^{-1}$,
implying that $F'^*(\alpha)$ is equal to the class of 
$F'^*(g_{\alpha}^{-1}F^*(g_{\alpha}))=F'^*(g_{\alpha}^{-1})
F^*\left(F'^*(g_{\alpha})\right)$ in $H^1(F^*,A_{\Galg^*}(s))$,
because $F^*$ and $F'^*$ commute. We then deduce that $F'^*(s_{\alpha})$
and $s_{F'^*(\alpha)}$ are $\Galg^{*F^*}$-conjugate.
\end{remark}

% For fixed $s \in \Galg^{*F^*}$, the next lemma shows that the
% irreducible constituents of~$\rho_s$ all lie over different
% irreducible characters of $\Zalg^F$.

\begin{lemma}Suppose that $\Galg$ is simple and $d=|D|$ a prime number.
Let $s \in \Galg^{*F^*}$ be a semisimple element such that
$|A_{\Galg^*}(s)^{F^*}|=d$ and $\{s_{\alpha} \, | \, 
\alpha\in H^1(F^*,A_{\Galg^*}(s))\}$ be a set of representatives for
the $F$-rational semisimple classes corresponding to
$s$ as in Remark~\ref{rk:paramcl} 
(we choose the notation such that $s_1=s$).
Let $S_{\alpha}$ be the set of 
constituents of $\rho_{s_{\alpha}}$ and 
$\nu_{\alpha}\in\Irr(\Zalg^F)$ the character satisfying
$\cyc{\Res_{\Zalg^F}^{\Galg^F}(\chi),\nu_{\alpha}}_{\Zalg^F}\neq 0$ for all
$\chi\in S_{\alpha}$.
Then we have $\nu_{\alpha}\neq\nu_{\alpha'}$ for $\alpha\neq \alpha'$.
\label{resserie}
\end{lemma}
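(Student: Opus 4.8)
The plan is to connect the central character $\nu_\alpha$ attached to the Lusztig series of $s_\alpha$ with the explicit $H^1$-description of the semisimple classes given in Remark~\ref{rk:paramcl}, and then exploit that $|A_{\Galg^*}(s)^{F^*}| = d$ is prime so that the $d$ classes $s_\alpha$ must yield $d$ distinct central characters. First I would recall from Subsection~\ref{semicentral} (and \cite[9.D]{BonnafeSLn}) that $\nu_\alpha$ is computed as $\Res_{\Zalg^F}^{\Talg_w^F}(\theta_{s_\alpha})$ for a maximal torus $\Talg_w$ with $w \in W^\circ(s_\alpha)$, in duality with some $F^*$-stable torus containing $s_\alpha$. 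The key point is that the dependence of $\theta_{s_\alpha}|_{\Zalg^F}$ on $\alpha$ is governed exactly by the map $\hat\omega_s^0 : H^1(F,\Zalg) \to \Irr(A_{\Galg^*}(s)^{F^*})$ and its relationship, under the duality $\hat\omega : \cal Z(\Galg) \to \Irr(\ker'(i^*))$, to the cocycle $g_\alpha^{-1}F^*(g_\alpha)$. Concretely, the "twist" $g_\alpha$ moves $s$ to $s_\alpha$ in a way that changes the induced central character by (the image under the appropriate pairing of) the class $\alpha \in H^1(F^*, A_{\Galg^*}(s))$.

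The main step is therefore to make precise the following: the assignment $\alpha \mapsto \nu_\alpha \overline{\nu_1}$ (a character of $\Zalg^F$) factors through a \emph{group homomorphism} $H^1(F^*, A_{\Galg^*}(s)) \to \Irr(\Zalg^F)$, and this homomorphism is \emph{injective}. Injectivity would give $\nu_\alpha \neq \nu_{\alpha'}$ whenever $\alpha \neq \alpha'$, which is the claim. To see the factorization and injectivity, I would use that $A_{\Galg^*}(s)^{F^*} \simeq C(s)^{F^*}/C(s)^{\circ F^*}$ has order $d$ and is thus cyclic, that $H^1(F^*, A_{\Galg^*}(s))$ is isomorphic (via Lang–Steinberg applied to the finite abelian group, together with the cyclicity) to $A_{\Galg^*}(s)_{F^*}$ which also has order $d$, and that the whole mechanism of Subsection~\ref{rk:jordan} — in particular the isomorphisms $\hat\omega^0$ and the surjection $\hat\varphi_s$ — identifies these $d$ elements with $d$ distinct characters of $\ker'(i^*)^{F^*}$, hence (pulling back through the embedding $\Zalg^F \hookrightarrow Z(\widetilde\Galg)^F$ and the pairing $\cal Z(\Galg) \cong \Irr(\ker'(i^*))$) with $d$ distinct restrictions to $\Zalg^F$. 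Since $[s]_{\Galg^*} \cap \Galg^{*F^*}$ splits into exactly $d$ classes $[s_\alpha]$ and the $d$ values $\nu_\alpha$ live in a set on which these are constrained to be pairwise distinct, a counting/bijection argument closes the gap: the map $\alpha \mapsto \nu_\alpha$ from a $d$-element set to $\Irr(\Zalg^F)$ is injective because its "difference" map is a monomorphism of groups of order $d$.

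I would organize the write-up as: (i) fix $w$, a torus $\Talg_w^*$ containing $s = s_1$, and for each $\alpha$ conjugate this data by $g_\alpha$ to get a torus in duality containing $s_\alpha$, so that $\theta_{s_\alpha}$ and $\theta_s$ are related by the conjugation $g_\alpha$ on the dual side; (ii) use \cite[9.D]{BonnafeSLn} and \cite[4.10, 4.11]{BonnafeSLn} to express $\nu_\alpha \overline{\nu_1}$ as $\omega^0\big(\varphi_s(\overline\alpha)\big)$ or the analogous expression, where $\overline\alpha \in A_{\Galg^*}(s)^{F^*}$ corresponds to $\alpha$ under the canonical bijection $H^1(F^*, A_{\Galg^*}(s)) \leftrightarrow A_{\Galg^*}(s)^{F^*}$ valid since $A_{\Galg^*}(s)$ is abelian (indeed cyclic of prime order $d$); (iii) observe that $\varphi_s$ is injective and that $\hat\omega^0$ is an isomorphism, hence the composite $\alpha \mapsto \nu_\alpha \overline{\nu_1}$ is injective.

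The main obstacle I anticipate is step (ii): keeping track of precisely which pairings and which Frobenius-twisted cohomology groups appear, and verifying that the "difference" $\nu_\alpha \overline{\nu_1}$ really is given by the image of $\alpha$ under an injective map rather than merely \emph{some} map. This requires carefully threading the identifications $\Zalg^F \subseteq Z(\widetilde\Galg)^F = Z(\widetilde\Galg^F)$, the isomorphism $\hat\omega : \cal Z(\Galg) \to \Irr(\ker'(i^*))$, the morphism $\varphi_s : A_{\Galg^*}(s) \hookrightarrow \ker'(i^*)$ of~(\ref{phis}), and the behaviour of all of these under $F^*$; the bookkeeping of Subsection~\ref{rk:jordan} (especially the commutativity already established there) does most of the work, but assembling it into the clean statement "$\alpha \mapsto \nu_\alpha$ is injective" is where the care is needed. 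Once that is in place, primality of $d$ is not even strictly necessary for injectivity, but it is what guarantees there are exactly $d$ classes $s_\alpha$ in the first place and makes the statement the sharp one needed later.
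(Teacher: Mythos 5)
Your plan coincides in substance with the paper's proof: the paper invokes Bonnaf\'e's homomorphism $\omega_s^1\colon H^1(F^*,A_{\Galg^*}(s))\rightarrow\Irr(\Zalg^F)$ (from \cite[Subsection 8]{BonnafeSLn}) together with the identity $\nu_{\alpha}\nu_1^{-1}=\omega_s^1(\alpha)$ (which is \cite[9.14]{BonnafeSLn}), and then concludes from injectivity of $\omega_s^1$. Your step (ii) is essentially a re-derivation of \cite[9.14]{BonnafeSLn} from \cite[4.10, 4.11, 9.D]{BonnafeSLn}; that is legitimate, but it is exactly the content the paper simply cites, so no disagreement there.

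The genuine gap is your step (iii). Injectivity of $\varphi_s$ plus the fact that $\hat{\omega}^0$ is an isomorphism does \emph{not} formally give injectivity of the induced map on $H^1(F^*,-)$: for a finite abelian group $M$, $H^1(F^*,M)$ is the quotient of $F^*$-coinvariants $M/\{m^{-1}F^*(m)\mid m\in M\}$, and an injective $F^*$-equivariant homomorphism can die after passing to coinvariants (already for the inclusion of the order-$2$ subgroup into a cyclic group of order $4$ on which $F^*$ acts by inversion). Your parenthetical claim that primality of $d$ is ``not even strictly necessary'' for injectivity is precisely the point at which this is glossed over. The paper closes the argument with extra input: it first establishes $|A_{\Galg^*}(s)|=|\Zalg^F|=d$ (citing Steinberg and the Borel et al.\ seminar), so that $F^*$ acts trivially on $A_{\Galg^*}(s)$ and $\omega_s^1$ is a homomorphism between groups of prime order $d$, and it is at this level that $\omega_s^1$ is seen to be an isomorphism. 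To repair your write-up you must either import that order count and the non-triviality of $\omega_s^1$, or otherwise rule out that $\varphi_s(A_{\Galg^*}(s))$ lies inside $\{x^{-1}F^*(x)\mid x\in\ker'(i^*)\}$; neither follows from injectivity of $\varphi_s$ alone.
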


\begin{proof}
Since $d=|D|$ is prime, we have $|A_{\Galg^*}(s)|=|\Zalg^F|=d$;
see \cite[p.~166]{steinberg} and \cite[II.4.4]{Borel}. 
So, $|A_{\Galg^*}(s)^{F^*}|=d$ implies
$A_{\Galg^*}(s)^{F^*}=A_{\Galg^*}(s)$. Following~\cite[Subsection~8]{BonnafeSLn},  
there is a group homomorphism
$$\omega_s^1:H^1(F^*,A_{\Galg^*}(s))\rightarrow \Irr(\Zalg^F).$$
Again, since $d$ is a prime $\omega_s^1$ is an isomorphism.
Hence, \cite[9.14]{BonnafeSLn} implies
$\nu_{\alpha}\nu_{1}^{-1}=\omega_s^1(\alpha)$ for all 
$\alpha \in H^1(F^*,A_{\Galg^*}(s))$. Since $\omega_s^1$ is
injective (even bijective), the result follows.
\end{proof}

\begin{lemma}\label{nuG}
Suppose that $\Galg$ is simple and $d=|D|$ a prime number. Assume that
$p$ is a good prime for $\Galg$, and fix a positive integer $m$ and 
an $F$-stable character
$\mu\in\Irr(\Zalg^{F^m})$. For $i\in\{1,d\}$, let 
$$N_{i,m}(\mu)=\{\rho_s\,|\, s \in\cal S^{[m]},
\cyc{\rho_s,\rho_s}_{\Galg^{F^m}}=i,
\cyc{\Res_{\Zalg^{F^m}}^{\Galg^{F^m}}(\rho_{s}),\mu}_{\Zalg^{F^m}}\neq 0\},$$
where ${\cal S}^{[m]}$ is a set of representatives for the
semisimple conjugacy classes of $\Galg^{*F^{*m}}$.
Since $\Zalg^F \in \{1,d\}$, we have the following two cases:
\begin{itemize}
\item Suppose $N_{F^m/F}:\Zalg^{F^m}\rightarrow\Zalg^F$ is trivial. Then 
 $N_{1,m}(\mu)^F=\emptyset$ for $\mu\neq 1_{\Zalg^{F^m}}$ and
$N_{1,m}(1_{\Zalg^{F^m}})^F=\{\rho_s\,|\, s\in\cal S,\,
F^*([s]_{\Galg^{*F^{*m}}})=[s]_{\Galg^{*F^{*m}}},
|A_{\Galg^*}(s)|=1\}$, where $\cal S$ is a set of representatives for
the semisimple classes of~$\Galg^{*F^*}$. 
Moreover, for $\mu,\,\mu'\in\Irr(\Zalg^{F^m})$,
one has $|N_{d,m}(\mu)^F|=|N_{d,m}(\mu')^F|$.
\item Suppose $N_{F^m/F}:\Zalg^{F^m}\rightarrow\Zalg^F$ is surjective.
Then for every $\mu,\,\mu'\in\Irr(\Zalg^{F^m})$ and $i\in\{1,d\}$, one
has $|N_{i,m}(\mu)^F|=|N_{i,m}(\mu')^F|$.
\end{itemize}
\end{lemma}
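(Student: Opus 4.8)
The plan is to reduce the statement, via Lemma~\ref{parasemi}, to a counting problem about $F$-stable $\Galg^{*F^{*m}}$-conjugacy classes of semisimple elements weighted by their associated central characters, and then to exploit the isomorphisms $\omega_s^1$ of Lemma~\ref{resserie} together with the surjectivity (or triviality) of the norm map. First I would observe that since $p$ is good and $\Galg$ is simple, Lemma~\ref{parasemi} gives $\Irr_{p'}(\Galg^{F^m}) = \{\rho_{s,z}\}$ and that $\cyc{\rho_s,\rho_s}_{\Galg^{F^m}} = |A_{\Galg^*}(s)^{F^{*m}}|$, which is $1$ or $d$ because $|A_{\Galg^*}(s)|$ divides $|\Zalg^F| = d$ and $d$ is prime (as in Lemma~\ref{resserie}). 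By Theorem~\ref{imageF} (applied with $F' = F$, which acts trivially on $\Phi$ by Hypothesis~\ref{hypoG}), $F$ permutes the $\rho_s$ according to $s \mapsto F^{*-1}(s)$, so $\rho_s \in N_{i,m}(\mu)^F$ forces $[s]_{\Galg^{*F^{*m}}}$ to be $F$-stable; together with Lemma~\ref{resZ} it also controls how $\Res_{\Zalg^{F^m}}(\rho_s)$ relates to a character of $\Zalg^F$ via the norm map.

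Next I would split into the two cases according to $\Zalg^F$. If $\Zalg^F = 1$ then $A_{\Galg^*}(s)$ is trivial for all $s$, so $i$ is always $1$, $N_{d,m}(\mu) = \emptyset$ for all $\mu$, and the statement is vacuous in those lines; so assume $|\Zalg^F| = d$. In the first sub-case, $N_{F^m/F}$ trivial, Lemma~\ref{resZ} does not apply directly, but triviality of the norm means $\Zalg^{F^m}$ lies in the kernel of $N_{F^m/F}$; I would use the relation (from \cite[9.D]{BonnafeSLn}) between $\Res_{\Zalg^{F^m}}(\rho_s)$ and the linear character $\theta_s^{[m]}$ of a maximal torus to show that $\rho_s$ with $|A_{\Galg^*}(s)| = 1$ (equivalently $\cyc{\rho_s,\rho_s} = 1$) must have $\Res_{\Zalg^{F^m}}(\rho_s) = \rho_s(1)\cdot 1_{\Zalg^{F^m}}$ — this uses that $\Zalg = Z(\Galg) \subseteq Z(\widetilde{\Galg})$ and $\theta_{\widetilde s}$ restricts trivially here — giving $N_{1,m}(\mu)^F = \emptyset$ for $\mu \neq 1$ and the stated description for $\mu = 1$. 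For the $d$-orbits in this sub-case I would use Remark~\ref{rk:paramcl}: the $F$-stable $\Galg^{*F^{*m}}$-classes with $|A_{\Galg^*}(s)^{F^{*m}}| = d$ that are $F$-stable as classes come in packets indexed by $H^1(F^{*m}, A_{\Galg^*}(s))$, and by Lemma~\ref{resserie} (applied over $\F_{q^m}$, using $\omega_s^1$ an isomorphism since $d$ is prime) the $d$ members of each packet carry the $d$ distinct characters of $\Zalg^{F^m}$; since $F$ permutes the packets and acts within each packet through $F^*$ on $H^1$, but the map "packet $\mapsto$ multiset of central characters it realizes" is $F$-equivariant and hits each $\mu$ exactly once per $F$-stable packet, one concludes $|N_{d,m}(\mu)^F|$ is independent of $\mu$.

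In the second sub-case, $N_{F^m/F}$ surjective, Lemma~\ref{resZ} applies and $\Res_{\Zalg^{F^m}}(\rho_s^{[m]}) = \rho_s^{[m]}(1)\cdot N_{F^m/F}^*(\nu)$ for a character $\nu$ of $\Zalg^F$; so the central character of $\rho_s$ is always of the form $N_{F^m/F}^*(\nu)$ and, by Lemma~\ref{surjnorm}, distinct $\nu$ give distinct such characters, i.e. the $\mu$ in question must lie in the image of $N_{F^m/F}^*$. I would then run the same packet argument as above, now for both $i = 1$ and $i = d$: fixing an $F$-stable geometric class of $s$, the $F$-stable rational classes in it are indexed by the $F^*$-fixed points of $H^1(F^{*m}, A_{\Galg^*}(s))$ (when $i = 1$ this $H^1$ is trivial, so the class is automatically rational and $F$-stable, and its central character is forced), and the assignment of central characters is governed by the isomorphism $\omega_s^1$, which is $F^*$-equivariant by the commutativity established in Subsection~\ref{rk:jordan} (diagram~(\ref{diag})). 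Translating $F^*$-equivariance of $\omega_s^1$ together with surjectivity of $N_{F^m/F}^*$ shows that the fibers over different $\mu$ have equal size, giving $|N_{i,m}(\mu)^F| = |N_{i,m}(\mu')^F|$. The main obstacle I anticipate is the bookkeeping in the $N_{F^m/F}$-trivial case: there the norm map gives no help, so one must argue \emph{directly} that the semisimple characters with non-trivial central character cannot be $F$-stable in the $i = 1$ situation and must rearrange themselves evenly in the $i = d$ situation, which requires carefully combining Remark~\ref{rk:paramcl}, the $F^*$-action on $H^1(F^{*m}, A_{\Galg^*}(s))$, and the explicit form of $\omega_s^1$ from \cite[Subsection~8]{BonnafeSLn} — essentially checking that the "twist by $H^1$" and "central character" data are interchanged compatibly with $F$.
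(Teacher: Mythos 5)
Your treatment of the $i=d$ counts and of the trivial-norm $i=1$ case follows essentially the paper's route: packets of rational classes indexed by $H^1(F^{*m},A_{\Galg^*}(s))$ as in Remark~\ref{rk:paramcl}, Lemma~\ref{resserie} to see that each $F^*$-stable packet meets every central character exactly once (so that, by uniqueness and $F$-equivariance, $|N_{d,m}(\mu)^F|$ equals the number of $F^*$-stable geometric classes with disconnected centralizer), and, for $i=1$ with trivial norm, the fact that the relevant $\theta_s^{[m]}$ kills $\Zalg^{F^m}$. For the latter you should be explicit that the argument only works for \emph{$F$-stable} $\rho_s$: $F$-stability lets one take $s$ to be $F^*$-fixed, hence $\theta_s^{[m]}$ is $F$-stable, and Lemma~\ref{surjnorm} applied to the connected torus $\Talg$ gives $\theta_s^{[m]}=\theta\circ N_{F^m/F}$, after which triviality of $N_{F^m/F}$ on $\Zalg^{F^m}$ forces the trivial central character; as you state it (``$\rho_s$ with $|A_{\Galg^*}(s)|=1$ must restrict trivially'') the claim is false. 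Also, your dismissal of the case $\Zalg^F=1$ is based on a wrong implication: $A_{\Galg^*}(s)$ is governed by $\mathcal{Z}(\Galg)=\Zalg$, not by $\Zalg^F$, and the trivial-norm alternative arises precisely when $\Zalg^F$ is small while $\Zalg^{F^m}$ has order $d$, so that case is not vacuous.

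The genuine gap is the case $i=1$ with $N_{F^m/F}$ surjective. Your proposed argument --- ``$F^*$-equivariance of $\omega_s^1$ together with surjectivity of $N_{F^m/F}^*$ shows the fibers over different $\mu$ have equal size'' --- cannot work: when $C_{\Galg^*}(s)$ is connected, $H^1(F^{*m},A_{\Galg^*}(s))$ is trivial and $\omega_s^1$ carries no information; each such geometric class lies over a \emph{single} character of $\Zalg^{F^m}$, and no equivariance statement forces these classes to be equidistributed over the possible $\mu$. This is the genuinely hard count, and the paper closes it differently: Lemma~\ref{resZ} together with Lemma~\ref{surjnorm} identifies $|N_{1,m}(\mu)^F|$ with the number of semisimple characters $\rho_{s'}$ of $\Galg^F$ (over $\F_q$) having connected centralizer and lying over the character $\mu_0\in\Irr(\Zalg^F)$ with $\mu=N_{F^m/F}^*(\mu_0)$, and then invokes the computation in the proof of \cite[6.6]{Br8}, which expresses this number as $\frac{1}{|\Zalg^F|}\cyc{\Gamma_z,\Gamma_{z'}}_{\Galg^F}$ for fixed $z\neq z'$ in $H^1(F,\Zalg)$, hence independent of $\mu_0$. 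That Gelfand--Graev inner-product input is exactly where the hypothesis that $p$ is good is needed; your plan never uses that hypothesis in a substantive way, which signals that this essential counting ingredient is missing and the equality $|N_{1,m}(\mu)^F|=|N_{1,m}(\mu')^F|$ is not established by your argument.
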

\begin{proof}
Let $t$ be a semisimple element of $\Galg^{*F^{*m}}$ such that
$F^*([t]_{\Galg^{*F^{*m}}})=[t]_{\Galg^{*F^{*m}}}$. Then one
has $F^*([t]_{\Galg^*})=[t]_{\Galg^*}$ and by the Lang-Steinberg
theorem, $[t]_{\Galg^*}$ contains an $F^*$-stable element $s$. As in
Remark~\ref{rk:paramcl}, let $\{s_\alpha \, | \, \alpha\in
H^1(F^{*m},A_{\Galg^*}(s))\}$ be a set of
representatives for the $F^{*m}$-rational classes of $s$. In particular, one
has $[t]_{\Galg^{*F^{*m}}}=[s_{\alpha}]_{\Galg^{*F^{*m}}}$ for some
$\alpha\in H^1(F^{*m},A_{\Galg^*}(s))$.

Suppose $i=d$ and let
$C_s=\{\rho_{s_{\alpha}}\,|\,\cyc{\rho_{s_\alpha},\rho_{s_{\alpha}}}_{\Galg^{F^m}}=d,\,\alpha\in
H^1(F^{*m},A_{\Galg^*}(s))\}$. Therefore, Lemma~\ref{resserie} implies
$$|N_{d,m}(\mu)\cap C_s|=1.$$
We then deduce that
the number $|N_{d,m}(\mu)^F|$ is equal to the number of $F^*$-stable
geometric semisimple classes in $\Galg^*$ whose centralizer is
disconnected. In particular, this number does not depend on $\mu$.
Hence, for all $F$-stable 
$\mu,\,\mu'\in\Irr(\Zalg^{F^m})$, one has
$$|N_{d,m}(\mu)^F|=|N_{d,m}(\mu')^F|.$$

Suppose now that $i=1$.
Note that if $\rho_s\in N_{1,m}(\mu)^F$, the preceding discussion
implies that we can suppose $F^*(s)=s$.
Fix a maximal 
$F^{*m}$-stable torus $\Talg^*$ containing $s$ 
and $(\Talg,\theta_s)$ a pair dual
to $(\Talg^*,s)$. Since $F^*(s)=s$, the character $\theta_s$ of
$\Talg^{F^m}$ is $F$-stable. Moreover,  
Lemma~\ref{surjnorm} implies (because $\Talg$ is connected)
that there is $\theta\in\Irr(\Talg^F)$
satisfying $\theta_s=N_{F^m/F}^*(\theta)$.
Now, if $N_{F^m/F}:\Zalg^{F^m}\rightarrow\Zalg^F$ is trivial, then for
every $z\in\Zalg^{F^m}$, one has $\theta_s(z)=\theta(1)=1$. In
particular,
$\Res_{\Zalg^{F^m}}^{\Talg^{F^m}}(\theta_s)=1_{\Zalg^{F^m}}$, implying
%
%Lemma~\ref{resZ} then implies 
$\rho_s$ is over $1_{\Zalg^{F^m}}$. Thus
$\rho_s\in N_{1,m}(1_{\Zalg^{F^m}})^F$.
If $N_{F^m/F}:\Zalg^{F^m}\rightarrow\Zalg^F$ is surjective, then by
Lemma~\ref{surjnorm}, there is $\mu_0\in\Irr(\Zalg^F)$ such that
$\mu= N_{F^m/F}^*(\mu_0)$. Lemma~\ref{resZ} then
implies that $|N_{1,m}(\mu)^F|$ equals the
number $N$ of semisimple characters $\rho_s'$ of~$\Galg^F$
such that the centralizer of $s$ in $\Galg^*$ is connected and
$\cyc{\Res_{\Zalg^F}^{\Galg^F}(\rho_{s}'),\mu_0}_{\Zalg^F}\neq 0$.
In the proof of~\cite[6.6]{Br8}, it is shown that
$$N=\frac{1}{|\Zalg^F|}\cyc{\Gamma_{z},\Gamma_{z'}}_{\Galg^F},$$
for some fixed $z \neq z'\in H^1(F,\Zalg)$ not depending on
$\mu_0$ (note that the part of~the proof of~\cite[6.6]{Br8} which 
we use requires $p$ to be good). 
So, $N$ does not depend on~$\mu$ and the result follows. 
\end{proof}

\subsection{Central characters and automorphisms of $\Balg^F$}

In this subsection, we study the action of the field automorphisms on
the set of $D$-orbits on $\Irr_{p'}(\Balg^F|\nu)$. In fact, we
consider the action of $F$ on the set of $D$-orbits on
$\Irr_{p'}(\Balg^{F^m}|\mu)$ for positive integers $m$ and
$\mu \in \Irr(\Zalg^{F^m})$. 

\begin{lemma}\label{nuB}
Suppose that $\Galg$ is simple and $d=|D|$ is a prime number.
Fix a positive integer $m$ and an $F$-stable character
$\mu\in\Irr(\Zalg^{F^m})$. 
For $i\in\{1,d\}$ 
let $N_{i,m}'(\mu)$ be the set of $D$-orbits on
$\Irr_{p'}(\Balg_0^F|\mu)$ of size $i$.
Then we have the following two cases:
\begin{itemize}
\item Suppose $N_{F^m/F}:\Zalg^{F^m}\rightarrow\Zalg^F$ is trivial. Then 
 $N_{1,m}'(\mu)^F=\emptyset$ for $\mu\neq 1_{\Zalg^{F^m}}$ and
$N_{1,m}'(1_{\Zalg})^{F^m}=\{\chi_{J,1,\psi}\,|\,\psi\in\Irr(\Talg_J^{F^m}),\,
F(\psi)=\psi,\, |H_J^{F^m}|=1\},$
where $\chi_{J,1,\psi}$ is the irreducible character of $\Balg_0^{F^m}$
defined in Lemma~\ref{charB}.
Moreover, for $\mu,\,\mu'\in\Irr(\Zalg^{F^m})$,
one has $|N_{d,m}'(\mu)^F|=|N_{d,m}'(\mu')^F|$.
\item Suppose $N_{F^m/F}:\Zalg^{F^m}\rightarrow\Zalg^F$ is surjective.
Then for every $\mu,\,\mu'\in\Irr(\Zalg^{F^m})$ and $i\in\{1,d\}$, one
has $|N_{i,m}'(\mu)^F|=|N_{i,m}'(\mu')^F|$.
\end{itemize}
\end{lemma}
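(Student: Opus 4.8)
The plan is to mimic the proof of Lemma~\ref{nuG} (the analogous statement for $\Galg^F$), replacing the parametrization of semisimple characters by semisimple classes with the parametrization of $\Irr(\Balg_0^F)$ given in Lemma~\ref{charB} and Lemma~\ref{deforb}. The starting point is the decomposition $\Irr^*(\Ualg_J^F)=\bigsqcup_{j=1}^{i(J)}\omega_{J,j}$ from equation~(\ref{eq:unionorb}), together with Lemma~\ref{deforb}, which says that the $\widetilde{\Talg}^F$-orbits on $\Irr(\Balg_0^F)$ are exactly the sets $D_{J,\psi,i(J)}$, and (since $D=\widetilde{\Talg}^F/\Talg^F$ is cyclic of prime order $d$) that a $D$-orbit on $\Irr(\Balg_0^F|\mu)$ has size $1$ precisely when $i(J)=1$, and size $d$ precisely when $i(J)=d$. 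By Remark~\ref{rk:centrelevi}, $i(J)$ is governed by $H_J\simeq\cal Z(\Lalg_J)$, and by the lemma preceding Remark~\ref{rk:centrelevi}, $\Cen_{\Talg^F}(\phi_{J,j})=\Talg_J^F=\Talg_J^{\circ F}\times H_J^F$, so the constituent $\psi$ of a character $\chi_{J,j,\psi}$ lives on $\Talg_J^F$.

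First I would fix a $D$-orbit $\omega\in\mathcal{O}'_{D,\mu}$ on $\Irr_{p'}(\Balg_0^{F^m}|\mu)$ and compute, via Frobenius reciprocity (as in the two unnamed lemmas of Subsection~5.2), how $\mu$ constrains $\psi$: one has $\cyc{\chi_{J,j,\psi},\Ind_{\Zalg^{F^m}}^{\Balg_0^{F^m}}(\mu)}\neq 0$ iff $\cyc{\Res_{\Zalg^{F^m}}^{\Talg_J^{F^m}}(\psi),\mu}\neq 0$. Next I would analyze the action of $F$ on the orbits $D_{J,\psi,i(J)}$ using Lemma~\ref{la:FrobchrB}: $F(D_{J,\psi,i})=D_{J,F(\psi),i}$, so $D_{J,\psi,i}$ is $F$-stable iff $F(\psi)=\psi$ (for $F$-stable $J$; note $F$ permutes the subsets $J$ and fixes those appearing for $F$-stable orbits — here $F$ acts trivially on $\Delta$ since it acts trivially on $W$, so every $J$ is $F$-stable).

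For the case $N_{F^m/F}:\Zalg^{F^m}\to\Zalg^F$ trivial: when $i(J)=d$ (i.e. $|H_J^{F^m}|=d$), the restriction $\Res_{\Zalg^{F^m}}^{\Talg_J^{F^m}}$ composed with the isomorphism $H_J\simeq\cal Z(\Lalg_J)$ and the map $\omega_s^1$-analogue for Levi subgroups distributes the $d$ orbits $\chi_{J,1,\psi},\dots$ in a given family across the $d$ distinct central characters $\mu$ — one per character — by an argument parallel to Lemma~\ref{resserie} applied to $\Lalg_J$ in place of $\Galg$; hence $|N_{d,m}'(\mu)^F|$ counts the $F$-stable families with $|H_J^{F^m}|=d$ and does not depend on $\mu$. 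When $i(J)=1$, then $\Talg_J^{F^m}=\Talg_J^{\circ F^m}$ is a quotient of the connected torus $\Talg_J^\circ$, the norm map on it is surjective (Lemma~\ref{surjnorm} and the remark after it), and triviality of $N_{F^m/F}$ on $\Zalg^{F^m}$ forces $\Res_{\Zalg^{F^m}}^{\Talg_J^{F^m}}(\psi)=1_{\Zalg^{F^m}}$; so these orbits all lie over $\mu=1_{\Zalg^{F^m}}$, giving $N_{1,m}'(\mu)^F=\emptyset$ for $\mu\neq 1$ and the stated description for $\mu=1$.

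For the case $N_{F^m/F}$ surjective: write $\mu=N_{F^m/F}^*(\mu_0)$ by Lemma~\ref{surjnorm}, and use a norm-map/counting argument analogous to the one in Lemma~\ref{nuG} (invoking the analogue of Lemma~\ref{resZ} for $\Balg_0^F$ together with the count from the proof of~\cite[6.6]{Br8}) to show $|N_{i,m}'(\mu)^F|$ equals a quantity independent of $\mu_0$, for both $i=1$ and $i=d$. The main obstacle I expect is the bookkeeping in the $i(J)=d$ subcase: establishing rigorously that, within a fixed family $\{\chi_{J,1,\psi},\dots,\chi_{J,d,\psi}\}$ (equivalently, over a fixed $\omega_{J,j}$), the $d$ members are spread bijectively over the $d$ characters of $\Zalg^{F^m}$, which requires transporting the isomorphism $\omega_s^1$ of Lemma~\ref{resserie} from the reductive-group setting to the Levi $\Lalg_J$ via Remark~\ref{rk:choix} (the element $t_{z_j}$ with $z_j\in H^1(F,Z(\Lalg_J))$) and checking that the cohomology class $z_j$ records the central character exactly as $\alpha$ does in Remark~\ref{rk:paramcl}; keeping the $F^m$- versus $F$-structures straight throughout is the delicate part.
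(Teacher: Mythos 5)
There is a genuine gap, and it sits exactly where you yourself flag ``the main obstacle'': your claim that, within a fixed family $\{\chi_{J,1,\psi},\dots,\chi_{J,d,\psi}\}$, the $d$ members are spread bijectively over the $d$ characters of $\Zalg^{F^m}$, with the class $z_j$ of Remark~\ref{rk:choix} recording the central character, is false. The family $\{\chi_{J,j,\psi}\}_{1\le j\le d}$ is a single $D$-orbit, and $D$ acts trivially on $\Zalg^{F^m}$; more concretely, by the very Frobenius-reciprocity computation you invoke at the start, $\Res_{\Zalg^{F^m}}^{\Balg_0^{F^m}}(\chi_{J,j,\psi})=\chi_{J,j,\psi}(1)\cdot\Res_{\Zalg^{F^m}}^{\Talg_J^{F^m}}(\psi)$, which does not depend on $j$. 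So all $d$ members of one orbit lie over the \emph{same} $\mu$; the index $j$ (equivalently $z_j$) plays no role for the central character. Consequently your mechanism for the equidistribution $|N'_{d,m}(\mu)^F|=|N'_{d,m}(\mu')^F|$ collapses. The correct analogue of Lemma~\ref{resserie} is obtained by varying not $j$ but the $H_J$-component of $\psi$: writing $\psi=\psi^{\circ}\otimes\eta$ with respect to $\Talg_J^{F^m}=\Talg_J^{\circ F^m}\times H_J^{F^m}$, one uses that the inclusion $\Zalg\subseteq Z(\Lalg_J)$ induces a surjection $h_{\Lalg_J}:\Zalg\rightarrow H_J$ which is the projection onto the $H_J$-factor; hence for fixed $(J,\psi^{\circ})$ the characters $\psi^{\circ}\otimes\eta$, $\eta\in\Irr(H_J^{F^m})$, have pairwise distinct restrictions to $\Zalg^{F^m}$, so the $d$ \emph{distinct orbits} $D_{J,\psi^{\circ}\otimes\eta,d}$ lie over distinct central characters, and $F$-stable $\eta$ (with $\psi^{\circ}$ $F$-stable) sweep out the $F$-stable $\mu$. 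This is the step your proposal is missing and cannot be repaired by transporting $\omega_s^1$ along $z_j$.

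A secondary weakness: in the surjective case with $i=1$, after reducing via the norm map to $|N'_{1,m}(\mu)^F|=|N'_{1,1}(\mu_0)|$, the independence of $\mu_0$ still needs an actual count; an ``analogue of Lemma~\ref{resZ} for $\Balg_0^F$'' together with the count from \cite[6.6]{Br8} is not an argument on the Borel side. One way (the paper's) is the subtraction $|N'_{1,1}(\mu_0)|=|\Irr_{p'}(\Balg_0^F|\mu_0)|-d\,|N'_{d,1}(\mu_0)|$, using Lemma~\ref{la:IrrPprimeBnu} and the $i=d$ equidistribution at level $F$ just described; alternatively one can count directly the linear characters of the abelian group $\Talg_J^F\supseteq\Zalg^F$ lying over $\mu_0$. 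Either way, this step must be supplied, and it again relies on the corrected $i=d$ argument rather than on your $z_j$-based picture. The rest of your skeleton (parametrization via Lemmas~\ref{charB} and \ref{deforb}, Frobenius reciprocity, Lemma~\ref{la:FrobchrB} for the $F$-action, and the treatment of the trivial-norm, $i=1$ case) does match the paper's route.
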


\begin{proof}
Recall that the set of characters
$\chi_{J,j,\psi}$ for $1\leq j\leq i=i(J)$ is a $D$-orbit of
$\Irr_{p'}(\Balg_0^{F^m})$
and 
\begin{equation}\label{eq:dec}
\Talg_J^{F^m}=\Talg_J^{\circ F^m}\times H_J^{F^m}.
\end{equation}
Let $\Lalg_J$ be the standard Levi subgroup of $\Galg^F$ corresponding
to $J$. Then thanks to Remark~\ref{rk:centrelevi},
the group $H_J$ is isomorphic to the component group $\cal
Z(\Lalg_J)$ and these two
groups are $F$-equivalent. Recall that the inclusion
$\Zalg\subseteq Z(\Lalg)$ induces an $F$-equivariant
surjective homomorphism $h_{\Lalg_J}:\cal Z(\Galg)\rightarrow\cal
Z(\Lalg_J)$, see \cite[4.2]{BonnafeSLn}. 
Moreover, $\cal Z(\Galg)=\Zalg$ because $\Galg$ is simple. 
Then $h_{\Lalg_J}$ is the
projection of $\Zalg$ on $H_J$ in the direct product 
$\Talg_J = \Talg_J^\circ \times H_J$.
Write $\Irr_F(H_J^{F^m})$ for the set of $F$-stable linear characters of
$H_J^{F^m}$.
Let $\psi$ be a linear character of
$\Talg_J^{F^m}$. Then
there are $\psi^{\circ}\in\Irr(\Talg_J^{\circ F^m})$ and $\psi_{H_J}\in
\Irr(H_J^{F^m})$ satisfying $\psi=\psi^{\circ}\otimes\psi_{H_J}$. Let 
$\psi'=\psi^{\circ}\otimes\psi'_{H_J}$ be such that
$$\Res^{\Talg_J^{F^m}}_{\Zalg^{F^m}}(\psi)=\Res^{\Talg_J^{F^m}}_{\Zalg^{F^m}}(\psi').$$
Now, for $h\in H_J^{F^m}$ there is $z_h\in\Zalg^{F^m}$ with
$h_{\Lalg_J}(z_h)=h$ (this is a consequence of the fact that
$h_{\Lalg_J}$ is surjective and $H_J^{F^m}$ equals $\{1\}$ or $H_J$).
Hence, there is
$t\in\Talg_J^{\circ F^m}$ satisfying $z_h=th$. Since
$\psi(z_h)=\psi'(z_h)$, it follows
$$\psi^{\circ}(t)\psi_{H_J}(h)=\psi^{\circ}(t)\psi'_{H_J}(h).$$
Hence, one has $\psi_{H_J}(h)=\psi'_{H_J}(h)$ for all $h\in H_J^{F^m}$
implying
$\psi_{H_J}=\psi_{H_J}'$.
We then deduce from this discussion that the characters
$\psi_{\eta}=\psi^{\circ}\otimes\eta$ for $\eta\in \Irr(H_J^{F^m})$
have distinct restrictions on $\Zalg^{F^m}$.

Let $N_{i,m}'$ be the set of $D$-orbits on $\Irr(\Balg_0^{F^m})$
of size $i$. Note that Remark~\ref{rk:choix} implies $i=|H_J^{F^m}|$,
because $|H_J^{F^m}|=|\cal
Z(\Lalg_J^{F^m})|=|H^1(F^{m},Z(\Lalg_J))|$.
Then  $N_{i,m}'$ is the set of $D_{J,\psi,i}$ for 
$J\subseteq\Delta$ with
$|H_J^{F^m}|=i$ and $\psi\in\Irr(\Talg_J^{F^m})$. 
Moreover, by
Lemma~\ref{la:FrobchrB},
$D_{J,\psi,i}$ is $F$-stable if and only if $F(J)=J$ and $F(\psi)=\psi$.
Now, let $\psi\in\Irr(\Talg_J^{F^m})$ be $F$-stable. If we write
$\psi=\psi^{\circ}\otimes\psi_{H_J}$ as above, then
$F(\psi^{\circ})=\psi^{\circ}$ and $F(\psi_{H_J})=\psi_{H_J}$, implying
$$N_{i,m}'^F=\{D_{J,\psi^{\circ}\otimes \eta,i}\in
N_{i,m}'\,|\, J \subseteq \Delta, \, F(\psi^{\circ})=\psi^{\circ},\,\eta\in
\Irr_F(H_J^{F^m})\}.$$

Suppose $i=d$. In particular, this implies $|H_J^{F^m}|=d$.
Then for $J \subseteq \Delta$ and $\psi^{\circ}\in\Talg_J^{\circ
F^m}$ with $F(\psi^{\circ})=\psi^{\circ}$, the $F$-stable $D$-orbits
$D_{J,\psi^{\circ}\otimes \eta,d}$ for $\eta\in\Irr_F(H_J^{F^m})$
are over distinct $F$-stable characters of $\Zalg^{F^m}$.
For all
$\mu,\,\mu'\in\Irr_F(\Zalg^{F^m})$, we deduce 
$$|N_{d,m}'(\mu)^F|=|N_{d,m}'(\mu')^F|.$$

Suppose $i=1$. Then $H_J$ is trivial and $\Talg_J$ is connected. Using
Lemma~\ref{surjnorm}, the $F$-stable characters of $\Talg_J^{F^m}$ are
the character $\psi=\psi_0\circ N_{F^m/F}$ for any $\psi_0\in
\Talg_J^F$. If $N_{F^m/F}:\Zalg^{F^m}\rightarrow \Zalg^F$ is
trivial, then $N_{1,m}'(1_{\Zalg^{F^m}})^F=N_{1,m}'^F$. In particular, for every
$\mu\neq 1_{\Zalg^{F^m}}$ one has $N_{1,m}'(\mu)^F=\emptyset$. If
$N_{F^m/F}:\Zalg^{F^m}\rightarrow\Zalg^{F}$ is surjective, then
Lemma~\ref{surjnorm} implies that every character $\mu\in\Irr_F(\Zalg^{F^m})$ has
the form $\mu_0\circ N_{F^m/F}$ for some $\mu_0\in \Irr(\Zalg^F)$ and
$$\cyc{\Res_{\Zalg^{F^m}}^{\Talg_J^{F^m}}(\psi),\mu}_{\Zalg^{F^m}}=
\cyc{\Res_{\Zalg^F}^{\Talg_J^F}(\psi_0),\mu_0}_{\Zalg^F}.$$
Note that $\psi_0$ lies in $N'_{1,1}$ because $H_J$
is trivial.
This implies $|N_{1,m}'(\mu)^F|=|N_{1,1}'(\mu_0)|$. Furthermore, the
discussion at the beginning of the proof implies that the numbers
$|N_{d,1}'(\mu_0)|$ do not depend on
$\mu_0\in\Irr(\Zalg^F)$. In particular, for every
$\mu_0\in\Irr(\Zalg^F)$ one has
$$|N_{d,1}'(\mu_0)|=\frac{1}{d}|N_{d,1}'|.$$
Moreover, one has
$|N_{1,1}'(\mu_0)|=|\Irr_{p'}(\Balg_0^F|\mu_0)|-d|N_{d,1}'(\mu_0)|$. Using
Lemma~\ref{la:IrrPprimeBnu}, we deduce that
$|N_{1,1}'(\mu_0)|$ does not depend on $\mu_0$, proving, for all
$\mu,\,\mu'\in\Irr_F(\Zalg^{F^m})$
$$|N_{1,m}'(\mu)^F|=|N_{1,m}'(\mu')^F|.$$
This yields the claim.
\end{proof}

\subsection{An $A$-equivariant bijection respecting central characters}

We keep the above notation and suppose that Hypothesis~\ref{hypoG}
holds. In particular, let $d$ be the order of $D$.
For $\nu\in\Irr(\Zalg^F)$, we write
$A_{\nu}$ for the subgroup of $A$ defined in equation~(\ref{eq:Anu}).
In this subsection, we will give conditions to find an $A_{\nu}$-equivariant
bijection between $\Irr_{p'}(\Galg^F|\nu)$ and $\Irr_{p'}(\Balg^F|\nu)$.

As before, let $\cal O_{D,\nu}$ be the set of $D$-orbits on
$\Irr_{p'}(\Galg^F|\nu)$ and $\cal O'_{D,\nu}$ be the set of $D$-orbits
on $\Irr_{p'}(\Balg^F|\nu)$. 
Furthermore, for $i\in\{1,d\}$, we define $N_i(\nu)$
(resp. $N'_i(\nu)$) to be the set of $D$-orbits on
$\Irr_{p'}(\Galg^F|\nu)$ (resp. $\Irr_{p'}(\Balg^F|\nu)$) of size $i$.   

\begin{proposition}\label{Deq}
%We keep the above notation. 
Suppose
that $(\Galg,F)$ satisfies Hypothesis~\ref{hypoG}.
Let $\nu\in\Irr(\Zalg^F)$.
If there are $K_{\nu}$-equivariant bijections
$f_i:N_i(\nu)\rightarrow N_i'(\nu)$ for $i\in\{1,d\}$,
then there is an $A_{\nu}$-equivariant bijection between
$\Irr_{p'}(\Galg^F|\nu)$ 
and $\Irr_{p'}(\Balg^F|\nu)$.
\end{proposition}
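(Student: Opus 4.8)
The plan is to assemble the desired $A_\nu$-equivariant bijection from the data $f_1,f_d$ by a ``base point'' argument refining the construction at the end of the proof of Lemma~\ref{la:diagequivcentral}. Since $d=|D|$ is prime, every $D$-orbit on $\Irr_{p'}(\Galg^F|\nu)$ (resp.\ on $\Irr_{p'}(\Balg^F|\nu)$) has size $1$ or $d$, so $\mathcal{O}_{D,\nu}=N_1(\nu)\sqcup N_d(\nu)$ and $\mathcal{O}_{D,\nu}'=N_1'(\nu)\sqcup N_d'(\nu)$; by Lemma~\ref{KD} the group $K_\nu$ acts on both sets, and it preserves orbit sizes because it normalizes $D$, so $\bar f:=f_1\sqcup f_d$ is a size-preserving $K_\nu$-equivariant bijection $\mathcal{O}_{D,\nu}\to\mathcal{O}_{D,\nu}'$. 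The first reduction is the observation that it suffices to produce, for each $\omega\in\mathcal{O}_{D,\nu}$, a $D$-equivariant bijection $g_\omega\colon\omega\to\bar f(\omega)$ with $g_{\sigma\omega}=\sigma\circ g_\omega\circ\sigma^{-1}$ for all $\sigma\in K_\nu$: then $f:=\bigsqcup_\omega g_\omega$ is $D$-equivariant and satisfies $f(\sigma\chi)=\sigma f(\chi)$, hence is $A_\nu$-equivariant since $A_\nu=D\rtimes K_\nu$ by~(\ref{eq:Anu}). The second reduction is that such a family is obtained by fixing one orbit $\omega$ in each $K_\nu$-orbit of $\mathcal{O}_{D,\nu}$, building $g_\omega$ so that it is in addition $L_\omega$-equivariant, where $L_\omega:=\Stab_{K_\nu}(\omega)$, and then transporting by $g_{\sigma\omega}:=\sigma\circ g_\omega\circ\sigma^{-1}$ --- a prescription which is well defined precisely because of that $L_\omega$-equivariance.

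For $\omega$ of size $1$ there is nothing to choose. For $\omega$ of size $d$ the group $D$ acts regularly on $\omega$ and on $\bar f(\omega)$, so, fixing a generator $\delta$ of $D$ and points $x_\omega\in\omega$, $y_\omega\in\bar f(\omega)$, the rule $g_\omega(\delta^k x_\omega)=\delta^k y_\omega$ is a $D$-equivariant bijection; and a one-line computation with the conjugation action of $L_\omega$ on $D$ (using that $ab\equiv1\bmod d$ whenever $\tau^{-1}\delta\tau=\delta^{a}$ and $\tau\delta\tau^{-1}=\delta^{b}$) shows that $g_\omega$ is $L_\omega$-equivariant as soon as $x_\omega$ and $y_\omega$ are fixed by $L_\omega$. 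So everything comes down to the point I expect to be the real obstacle: \emph{every size-$d$ $D$-orbit $\omega$ inside $\Irr_{p'}(\Galg^F|\nu)$, and likewise inside $\Irr_{p'}(\Balg^F|\nu)$, must contain an element fixed by its stabilizer $\Stab_{K_\nu}(\omega)$.} Without this, a ``twist'' obstruction in $\Z/d$ survives and the naive construction fails; it is exactly here that the explicit parametrizations proved earlier are needed.

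On the $\Galg$-side I would use Theorem~\ref{imageF}. A size-$d$ $D$-orbit $\omega$ is the set of constituents of some $\rho_s$, namely the characters $\rho_{s,z}$, $z\in H^1(F,\Zalg)$. Since the standard Frobenius $F_0$ fixes every simple root and acts trivially on $\Phi$ (this uses that $\Galg$ is simple and that $F$ acts trivially on $W$, so $F=F_0^n$), every field automorphism $\sigma\in K$ satisfies $\sigma(\Gamma_z)=\Gamma_{\sigma(z)}$ --- see~(\ref{eqgamma}) in the proof of Theorem~\ref{imageF} --- and fixes the trivial class $1\in H^1(F,\Zalg)$, hence fixes $\Gamma_1$. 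As $D_{\Galg}$ commutes with field automorphisms, the virtual character $D_{\Galg}(\Gamma_1)=\sum_{s\in\mathcal{S}}\varepsilon_{\Galg}\,\varepsilon_{\Cen_{\Galg^*}^\circ(s)}\,\rho_{s,1}$ is $K$-stable, so the set $\{\rho_{s,1}\mid s\in\mathcal{S}\}$ is $K$-stable; taking $x_\omega$ to be its unique element lying in $\omega$ gives a point of $\omega$ that depends only on $\omega$ and is fixed by every $\sigma\in K$ stabilizing $\omega$, in particular by $L_\omega$.

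On the $\Balg$-side I would work, via the $\mathcal{A}$-equivalence of Section~\ref{partborel}, with $\Irr(\Balg_0^F)$, and adopt the convention of Remark~\ref{rk:charBFrob} for $F'=F_0$; this is legitimate because $F_0$ commutes with $F$ and fixes every simple root, so each $\phi_J$ may be chosen $F_0$-stable, hence stable under all of $K$. A size-$d$ $D$-orbit is some $D_{J,\psi,d}=\{\chi_{J,j,\psi}\}$, and Lemma~\ref{la:FrobchrB} gives $\sigma(\chi_{J,j,\psi})=\chi_{J,j',\sigma(\psi)}$ with $z_{j'}=\sigma(z_j)$; if $\sigma\in L_\omega$, i.e.\ $\sigma$ fixes $D_{J,\psi,d}$, then $\sigma(\psi)=\psi$, and for $j=1$ the class $z_1$ is trivial (Remark~\ref{rk:choix}) and hence $\sigma$-fixed, so $\sigma(\chi_{J,1,\psi})=\chi_{J,1,\psi}$. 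Thus $y_\omega:=\chi_{J,1,\psi}$, the member of $\bar f(\omega)$ over the trivial cohomology class, is $L_\omega$-fixed. With $x_\omega$ and $y_\omega$ chosen this way the two reductions above go through verbatim and produce the required $A_\nu$-equivariant bijection. In short, the only genuinely non-formal ingredient is the control of $\Stab_{K_\nu}(\omega)$ inside a size-$d$ $D$-orbit, which the Gelfand--Graev description of Theorem~\ref{imageF} and the ``trivial class'' member supplied by Lemma~\ref{la:FrobchrB} are precisely designed to provide; the rest is bookkeeping with the semidirect product $D\rtimes K_\nu$.
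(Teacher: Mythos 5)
Your proposal is correct and takes essentially the same route as the paper: split into $D$-orbits, transport the given $K_\nu$-equivariant bijections $f_1,f_d$ on the sets of orbits, and anchor each size-$d$ orbit at the canonical constituents $\rho_{s,1}$ (via Theorem~\ref{imageF}) and $\chi_{J,1,\psi}$ (via Lemma~\ref{la:FrobchrB} together with Remarks~\ref{rk:choix} and~\ref{rk:charBFrob}). If anything, you are more careful than the paper at one point: you require base points fixed by the full stabilizer $\Stab_{K_\nu}(\omega)$ in \emph{every} size-$d$ orbit, whereas the paper makes arbitrary choices whenever the $K_\nu$-orbit $\Omega_i$ has size $>1$, leaving the well-definedness/equivariance of $\Psi$ unexamined when $1<|\Omega_i|<|K_\nu|$; your canonical choices close that gap.
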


\begin{proof}We define
$\eta:\mathcal{O}_{D,\nu}\rightarrow\mathcal{O}'_{D,\nu}$ by
$\eta(\omega)=f_1(\omega)$ if $\omega\in N_1(\nu)$, and
$\eta(\omega)=f_d(\omega)$ if $\omega\in N_d(\nu)$. 
Since $\mathcal{O}_{D,\nu}=N_1(\nu)\sqcup
N_d(\nu)$ and $\mathcal{O}_{D,\nu}'=N_1'(\nu)\sqcup N_d'(\nu)$
 and $f_i:N_i(\nu)\rightarrow N'_i(\nu)$
are $K_{\nu}$-equivariant bijections, we deduce that $\eta$ is a
$K_{\nu}$-equivariant bijection between $\mathcal{O}_{D,\nu}$ 
and $\mathcal{O}_{D,\nu}'$.
Write $\Omega=\{\Omega_i\ |\ i\in I\}$ (resp. $\Omega'$) for the set of 
$K_{\nu}$-orbits on $\mathcal{O}_{D,\nu}$ (resp. $\mathcal{O}_{D,\nu}'$).
For every $\Omega_i\in\Omega$, 
we fix $\omega_i\in\Omega_i$. Then $\eta(\omega_i)$ is a set of
representative for $\Omega'$, because $\eta$ is an $K_{\nu}$-equivariant
bijection.
If $|\Omega_i|>1$, we choose any character
$\chi_{\omega_i}\in\omega_i$ and any character
$\chi_{\omega_i}'\in\eta(\omega_i)$. 

Let $\gamma$ be a generator of the cyclic group $K_{\nu}$. 
Note that $\gamma$ is the restriction of a Frobenius map
$F':\Galg\rightarrow\Galg$.
Now, suppose that $|\Omega_i|=1$ (resp. $|\Omega_i'|=1$),
i.e. $\omega_i$ is fixed by $\gamma$.
Recall that
there is a semisimple element $s$ of $\Galg^{*F^*}$ such that $\omega_i$
is the set of constituents of $\rho_s$.
Since $F'(\omega_i)=\omega_i$, we deduce that $s$ and $F'^*(s)$ are
$\Galg^{*F^*}$-conjugate. In particular, Theorem~\ref{imageF}
implies that the constituent $\rho_{s,1}$ of $\rho_s$ is $F'$-stable.
We put $\rho_{s,1}=\chi_{\omega_i}$.  
Moreover, $\eta(\omega_i)$ is fixed by $\gamma$.
Furthermore, following the notation of Lemma~\ref{deforb}, one has
$\eta(\omega_i)=D_{J,\psi,k}$ for some $J\subseteq \Delta$,
$\psi\in\Irr(\Talg_J^F)$ and $k\in\{1,\,d\}$.
Using Lemma~\ref{la:FrobchrB}, $F'(D_{J,\psi,k})=D_{J,\psi,k}$ implies
$F'(J)=J$ and $F'(\psi)=\psi$. Thus, the character
$\chi_{J,1,\psi}\in\eta(\omega_i)$ is $F'$-stable and we put
$\chi_{\omega_i}'=\chi_{J,1,\psi}$.

Let~$\chi\in\Irr_{p'}(\Galg^F|\nu)$. We denote by $\omega$ its $D$-orbit.
Then there is a unique $i\in I$ such that $\omega\in\Omega_i$ implying
there exists a unique $l\in\N$ such that $\omega=\gamma^l(\omega_i)$.
Hence for every $\chi\in\Irr_{p'}(\Galg^F)$, there are unique
$i\in I$, $k,l\in\N$ such that
$$\chi=\delta^k\gamma^l(\chi_{\omega_i}).$$
We define
$\Psi:\Irr_{p'}(\Galg^F|\nu)\rightarrow\Irr_{p'}(\Balg^F|\nu)$ by setting 
$$\Psi(\chi)=\delta^kF'^l(\chi_{\omega_i}'),$$ where $i,j,k$ are as
above. Then $\Psi$ is an $A_{\nu}$-equivariant bijection.
\end{proof}

Recall that  the characteristic $p > 0$ of $\F_q$ is a good prime for $\Galg$
if $p$ does not divide the coefficients of the highest root of the
root system associated to $\Galg$ (see \cite[p.~125]{DM}). 

\begin{theorem}
\label{th:equivbij}
%We keep the above notation and 
Suppose that $p$ is a good prime for $\Galg$ and that $(\Galg,F)$ satisfies
Hypothesis~\ref{hypoG}. We suppose 
%that $p$ is a good prime for $\Galg$
%and that 
the subgroup of field automorphisms of $\Galg^F$ acts
trivially on $\Zalg^F$. Then for $\nu\in\Irr(\Zalg^{F})$,  the sets
$\Irr_{p'}(\Galg^F|\nu)$ and $\Irr_{p'}(\Balg^F|\nu)$ are
$A$-equivalent. 
\end{theorem}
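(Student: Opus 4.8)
The plan is to deduce the theorem from Proposition~\ref{Deq} and then convert everything into a counting problem for a finite cyclic group action. Because the field automorphisms act trivially on $\Zalg^F$, every $\nu\in\Irr(\Zalg^F)$ satisfies $K_\nu=K$ and hence $A_\nu=A$, so it is the same to produce an $A$-equivariant or an $A_\nu$-equivariant bijection between $\Irr_{p'}(\Galg^F|\nu)$ and $\Irr_{p'}(\Balg^F|\nu)$. By Proposition~\ref{Deq} it therefore suffices, for a fixed $\nu$, to exhibit $K$-equivariant bijections $f_i\colon N_i(\nu)\to N_i'(\nu)$ for $i\in\{1,d\}$; by Lemma~\ref{KD} the group $K$ does act on these sets of $D$-orbits. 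Since $K=\cyc{F_0}$ is cyclic of order $n$, two finite $K$-sets are isomorphic if and only if they carry the same number of $F_0^m$-fixed points for every $m$ dividing $n$; so the whole statement reduces to the equalities $|N_i(\nu)^{F_0^m}|=|N_i'(\nu)^{F_0^m}|$ for all $i\in\{1,d\}$ and all $m\mid n$.

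To get at these fixed-point numbers I would change the Frobenius map. Fix $m\mid n$, put $k=n/m$ and $\tilde F=F_0^m$. Then $\tilde F$ is a Frobenius map commuting with $F$, it acts trivially on $W$ and on $\Phi$ (as $F_0$ does), $\Talg$ and $\Balg$ are $\tilde F$-stable, $\tilde F^{k}=F$ (so $\Galg^F=\Galg^{\tilde F^{k}}$ is obtained by iterating $k$ times the $\F_{p^m}$-structure defined by $\tilde F$), and $\tilde F$, being a power of $F_0$, acts trivially on $\Zalg^F$, so the given $\nu$ is $\tilde F$-stable. One checks that $(\Galg,\tilde F)$ again satisfies Hypothesis~\ref{hypoG} with the same prime $d$ — the group $\Zalg^{\tilde F}$ still has order $d$, because the field automorphisms act trivially on the $d$-element group $\Zalg^F$ — and that $p$ remains good. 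Using the identification of $\Irr_{p'}(\Balg^F)$ with $\Irr(\Balg_0^F)$ from Section~\ref{partborel} (legitimate since $p$ is nonsingular, and compatible with $D$ and with $\tilde F$ by Remark~\ref{rk:charBFrob} and Lemma~\ref{la:FrobchrB}), one identifies $N_i(\nu)^{F_0^m}$ and $N_i'(\nu)^{F_0^m}$ with the sets $N_{i,k}(\nu)^{\tilde F}$ and $N_{i,k}'(\nu)^{\tilde F}$ appearing in Lemmas~\ref{nuG} and~\ref{nuB} applied to $(\Galg,\tilde F)$ (with their integer parameter equal to $k$ and their central character equal to $\nu$).

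Now I would invoke Lemmas~\ref{nuG} and~\ref{nuB}. The image of the norm map $N_{\tilde F^{k}/\tilde F}\colon\Zalg^F\to\Zalg^{\tilde F}$ is a subgroup of a group of prime order, hence trivial or everything, placing us in one of the two cases of those lemmas. In both cases the lemmas reduce the $i=d$ count, and the $i=1$ count (which in the trivial-norm case is supported on the trivial character), on the group side and on the Borel side, to quantities independent of $\nu$. Combining this with Lemma~\ref{resserie} (so that, summing over the $d$ characters of $\Zalg^F$, the $\nu$-wise group-side counts add up to the number of $\tilde F$-stable size-$i$ $D$-orbits on all of $\Irr_{p'}(\Galg^F)$, and likewise on the Borel side) and with Lemma~\ref{la:IrrPprimeBnu}, the desired equalities become — as in the proof of Lemma~\ref{la:diagequivcentral} — the statement that two quantities intrinsic to $(\Galg,F)$ coincide: on the group side, the number of $\tilde F$-stable geometric semisimple classes of $\Galg^*$ with disconnected (resp.\ connected) centralizer, expressed through inner products $\cyc{\Gamma_z,\Gamma_{z'}}_{\Galg^F}$ of Gelfand--Graev characters as in the proof of~\cite[6.6]{Br8}; on the Borel side, the corresponding count of pairs consisting of a subset $J\subseteq\Delta$ and an $\tilde F$-stable character of the subtorus $\Talg_J^{\circ F}$ of $\Talg_J^F$, with $H_J$ of order $d$ (resp.\ trivial). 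That these agree is the McKay identity for $\Galg^F$ together with its refinement for $\widetilde{\Galg}^F$ (connected centre) in a form compatible with $F_0$, which is available in the defining characteristic from the parametrizations of Section~\ref{partie1} and the results of~\cite{Br6} and~\cite{Br8}; feeding the bijections $f_i$ back into Proposition~\ref{Deq} then produces the $A$-equivariant bijection.

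The step I expect to be the main obstacle is precisely this last matching. Lemmas~\ref{nuG} and~\ref{nuB} only yield ``independence of the central character'', and upgrading that to an honest equality of fixed-point numbers forces one to control, uniformly in $m$, the ``connected part'' of the two parametrizations — semisimple classes with connected centralizer on one side, pairs $(J,\psi)$ with $H_J$ trivial on the other — which is exactly the content of the relative McKay statement for $\widetilde{\Galg}^F$ in the defining characteristic, made equivariant for the basic field automorphism $F_0$. A secondary, essentially bookkeeping, difficulty is keeping the two Frobenius maps $F$ and $\tilde F=F_0^m$ apart throughout and verifying that the passage between $\Irr_{p'}(\Balg^F)$ and $\Irr(\Balg_0^F)$ is equivariant for both $D$ and $F_0$.
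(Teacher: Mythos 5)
Your reduction is the same as the paper's: since the field automorphisms fix $\nu$, it suffices by Proposition~\ref{Deq} to produce $K$-equivariant bijections $f_i\colon N_i(\nu)\to N_i'(\nu)$, and since $K$ is cyclic this amounts to the fixed-point equalities $|N_{i,n}(\nu)^{F_0^j}|=|N'_{i,n}(\nu)^{F_0^j}|$ for all $j\mid n$ (the paper invokes \cite[13.23]{isaacs} for exactly this step), with Lemmas~\ref{nuG} and~\ref{nuB} disposing of the dependence on $\nu$. But the decisive equality of the \emph{total} numbers of $F_0^j$-stable $D$-orbits of each size, $|N_{i,n}^{F_0^j}|=|N_{i,n}'^{F_0^j}|$, is exactly where your argument stops being a proof: you assert it is ``the McKay identity for $\Galg^F$ \ldots made equivariant for $F_0$'', available from Section~\ref{partie1}, \cite{Br6} and \cite{Br8}. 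No such $F_0$-equivariant refinement exists in those sources (\cite{Br8} and \cite{lehrer} give only the plain counting statement, and \cite{Br6} concerns trivial Schur multiplier); producing it is essentially the content of the theorem, so as written the key step is circular.

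The missing idea is the paper's descent to the subfield: rather than keeping the big group $\Galg^F$ and varying which power of $F_0$ acts (your change of Frobenius $\tilde F=F_0^m$ does only this), one must compare with the groups $\Galg^{F_j}$, $\Balg^{F_j}$ over $\F_{p^j}$. Concretely, the paper proves three equalities whose analogues are absent from your proposal: (i) on the group side, $|N_{i,n}^{F_j}|=|N_{i,j}|$, obtained from Remark~\ref{rk:paramcl}, Lang--Steinberg, and the hypothesis that $F_0$ acts trivially on $\Zalg^F$ (which forces $A_{\Galg^*}(s)^{F_j^*}=A_{\Galg^*}(s)$, so every $F_j^*$-stable geometric class contributes exactly $i$ rational classes at level $n$ and the same count at level $j$); (ii) $|N_{i,j}|=|N'_{i,j}|$, which is where the known, \emph{non}-equivariant relative McKay counts for $\Galg^{F_j}$ and $\widetilde\Galg^{F_j}$ (\cite[1.1, 6.5]{Br8}, \cite{lehrer}, via Lemma~\ref{la:diagequivcentral}) enter legitimately; (iii) on the Borel side, $|N'_{i,j}|=|N_{i,n}'^{F_j}|$, via the explicit bijection $\varphi_i$ built from the norm map $N_{F/F_j}$ on $\Talg_J^{\circ}$ together with the triviality of the $F_j$-action on $H_J^F$. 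Chaining (i)--(iii) gives the fixed-point equality that your ``main obstacle'' paragraph correctly identifies but does not resolve; with that chain supplied, the rest of your argument (Lemmas~\ref{nuG}, \ref{nuB}, then Proposition~\ref{Deq}) goes through as in the paper.
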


\begin{proof}
Write $q=p^n$. Recall that $K=\cyc{F_0}$ and $F=F_0^n$.
For any divisor $j$ of $n$, we put $F_j=F_0^j$ and
for $i\in\{1,d\}$, let $N_{i,j}$ and $N'_{i,j}$ be the set of
$D$-orbits on $\Irr_{p'}(\Galg^{F_j})$ and $\Irr_{p'}(\Balg^{F_j})$,
respectively.

For $i\in\{1,d\}$ and $j$ a divisor of $n$, 
let $\cal T_i^{[j]}$ be a set of representatives $s$ for the
$F_j^*$-stable geometric semisimple classes of $\Galg^*$, such that
$|A_{\Galg^*}(s)^{F_j^*}|=i$.
By the Lang-Steinberg theorem, we can suppose that the elements of
$\cal T_i^{[j]}$ are $F_j^*$-stable. Let
$$\cal T_{i,j}=\{s\in\cal
T_i^{[n]}\,|\,F_j^*([s]_{\Galg^*})=[s]_{\Galg^*}\}.$$
Remark~\ref{rk:paramcl} implies
$$|N_{i,n}^{F_j}|=\sum_{s\in \cal
T_{i,j}}\,|H^1(F^*,A_{\Galg^*}(s))^{F_j^*}|.$$
Let $s$ be in $\cal T_{i,j}$. Again, by the
Lang-Steinberg theorem, there is $t\in
[s]_{\Galg^*}$ satisfying $F_j^*(t)=t$. Furthermore, $j$ divides $n$
implies $F^*(t)=t$. We can then suppose that every $s\in\cal T_{i,j}$
is $F_j^*$-stable and $F^*$-stable. By assumption, 
$F_0$ acts trivially on $\Zalg^F$. For every $F_j^*$-stable
semisimple element $s\in\Galg^*$, one has
$A_{\Galg^*}(s)=A_{\Galg^*}(s)^{F_j^*}$.
In particular, one has $\cal T_{i,j}=\cal T_i^{[j]}$ and
$|H^{1}(F^*,A_{\Galg^*}(s))^{F_j^*}|=i$ for every $s\in\cal T_{i,j}$. 
Thus, we have
\begin{equation}\label{eq:pro1}
|N_{i,j}|=\sum_{s\in\cal T_i^{[j]}}\,|H^1(F_j^*,A_{\Galg^*}(s))|
=\sum_{s\in\cal T_i^{[j]}}\, i
=\sum_{s\in \cal T_{i,j}}\, i
=|N_{i,n}^{F_j}|.
\end{equation}
By~\cite[1.1, 6.5]{Br8} and \cite[Theorem 1]{lehrer}, the relative
McKay conjecture holds for~$\Galg^{F_j}$ and $\widetilde{\Galg}^{F_j}$
(here we need that $p$ is a good prime for $\Galg$). So, by
Lemma~\ref{la:diagequivcentral}, for every $\mu \in \Irr(\Zalg^{F^n})$,
there is a $D$-equivariant bijection between
$\Irr_{p'}(\Galg^{F_j}|\mu)$ and $\Irr_{p'}(\Balg^{F_j}|\mu)$. So, in
particular, we get
\begin{equation}\label{eq:pro2}
|N_{i,j}|=|N_{i,j}'|.
\end{equation}
Moreover, with the notation of Lemma~\ref{deforb}, for $i\in\{1,d\}$, one has
$$N_{i,n}'^{F_j}=\{D_{J,\psi,i}\,|\,J \subseteq \Delta,\,F_j(\psi)=\psi\}.$$
Recall that $\Talg_J=\Talg_J^{\circ}\times H_J$ and
$F_j$ acts trivially on $H_J^F$
(because the groups $H_J$ and $\cal
Z(\Lalg_J)$ are $F_0$-equivalent, and
if $H_J^{F_0}$ is non trivial, then $H_J^{F_0}=H_J$). 
For $\psi\in\Irr(\Talg_J^{F_j})$, we write
$\psi=\psi^{\circ}\otimes\eta$ for the decomposition of $\psi$ with
respect to the direct product $\Talg_J^{F_j}\times H_J^{F_j}$. Then
$F_j(\psi)=\psi$ if and only if $F_j(\psi^{\circ})=\psi^{\circ}$.
Put
$$\varphi_i:N_{i,j}\rightarrow
N_{i,n}^{F_j},\,D_{J,\psi^{\circ}\otimes\eta,i}\mapsto
D_{J,\psi^{\circ}\circ N_{F/ F_j}\otimes\eta,i}.$$
Note that on the right hand side, the character $\eta$ is considered as an
$F_j$-stable linear character of~$H_J^F$. This is possible because $H_J^F=H_J^{F_j}$.
Since $N_{F/F_j}:\Talg_J^{\circ F}\rightarrow \Talg_J^{\circ F_j}$
induces a bijection between the linear characters of $\Talg_J^{\circ F_j}$
and the $F_j$-stable linear characters of $\Talg_J^F$ (see
Lemma~\ref{surjnorm}), we conclude that $\varphi_i$ is bijective for
$i\in\{1,d\}$.
In particular, one has
\begin{equation}\label{eq:pro4}
|N_{i,n}'^{F_j}|=|N_{i,j}'|.
\end{equation}
From relations~(\ref{eq:pro1}),~(\ref{eq:pro2}) and~(\ref{eq:pro4}), 
we deduce 
$$|N_{i,n}^{F_j}|=|N_{i,n}'^{F_j}|\quad\textrm{for all divisors }j\
\textrm{of }n.$$
Let $\nu\in\Irr(\Zalg^F)$. 
Suppose that $N_{F/F_j}:\Zalg^{F}\rightarrow\Zalg^{F_j}$ is surjective. 
Then by Lemma~\ref{nuG} and~\ref{nuB}, the numbers $|N_{i,n}(\nu)^F|$
and $|N_{i,n}'(\nu)^F|$ do not depend on $\nu$. In particular, 
$|N_{i,n}^{F_j}|=|N_{i,n}'^{F_j}|$ implies 
\begin{equation}
|N_{i,n}(\nu)^{F_j}|=N_{i,n}'(\nu)^{F_j}|,
\label{eq:pro5}
\end{equation}
where $N_{i,j}(\nu)$ and $N'_{i,j}(\nu)$ are the sets of
$D$-orbits on $\Irr_{p'}(\Galg^{F_j}|\nu)$ and $\Irr_{p'}(\Balg^{F_j}|\nu)$,
respectively.
Suppose that $N_{F/F_j}:\Zalg^{F}\rightarrow\Zalg^{F_j}$ is trivial.
Using the same argument, Lemma~\ref{nuG} and~\ref{nuB} imply
$|N_{d,n}(\nu)^{F_j}|=|N_{d,n}'(\nu)^{F_j}|$.
Moreover, for $\nu\neq 1_{\Zalg^F}$, we have
\begin{equation}\label{eq:pro7}
N_{1,n}(\nu)^{F_j}=\emptyset=N_{1,n}(\nu)^{F_j}.
\end{equation}
In particular, equation~(\ref{eq:pro7}) implies
$$|N_{1,n}(1_{\Zalg^F})^{F_j}|=|N_{1,n}^{F_j}|\quad\textrm{and}\quad
|N_{1,n}'(1_{\Zalg^F})^{F_j}|=|N_{1,n}'^{F_j}|.$$
Since $|N_{1,n}^{F_j}|=|N_{1,n}'^{F_j}|$, we have
\begin{equation}
|N_{1,n}(1_{\Zalg^F})^{F_j}|=|N_{1,n}'(1_{\Zalg^F})^{F_j}|
\label{eq:pro6}
\end{equation}
Thus, equations~(\ref{eq:pro5}),~(\ref{eq:pro7}) and~(\ref{eq:pro6})
imply that, for all divisors $j$ of $n$ and all $\nu\in\Irr(\Zalg^F)$,
one has $$|N_{i,n}(\nu)^{F_j}|=|N_{i,n}'(\nu)^{F_j}|.$$
Using~\cite[13.23]{isaacs}, we deduce that the sets $N_{i,n}(\nu)$ and
$N_{i,n}'(\nu)$ are $K$-equivalent for $i\in\{1,d\}$.  We conclude
with Proposition~\ref{Deq}.
\end{proof}

\begin{remark}
\label{rk:good}
Note that, in the proof of Theorem~\ref{th:equivbij}, we need the
assumption that $p$ is a good prime for $\Galg$ only in order to apply
the results of~\cite{Br8}.
\end{remark}

\begin{corollary}\label{cor:equi2}
Suppose $(\Galg,F)$ satisfies Hypothesis~\ref{hypoG} and $d=2$. If $p$
is a good prime for $\Galg$, then
for $\nu\in\Irr(\Zalg^F)$, the sets $\Irr_{p'}(\Galg^F|\nu)$ and
$\Irr_{p'}(\Balg^F|\nu)$ are $A$-equivalent.
\end{corollary}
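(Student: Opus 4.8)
The plan is to obtain the corollary as an immediate consequence of Theorem~\ref{th:equivbij}. That theorem has three hypotheses: that $p$ is a good prime for $\Galg$, that $(\Galg,F)$ satisfies Hypothesis~\ref{hypoG}, and that the group of field automorphisms of $\Galg^F$ acts trivially on $\Zalg^F$. The first two are already assumed in the corollary, so the only point to settle is that the third becomes automatic once $d=2$.

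First I would record that, under Hypothesis~\ref{hypoG}, the order of $\Zalg^F$ divides $d$ and hence lies in $\{1,d\}$; this is exactly the fact used (for an arbitrary prime $d$) in the proofs of Lemma~\ref{resserie} and Lemma~\ref{nuG}, and for $\Galg$ simply connected it is simply the isomorphism $D\simeq Z(\Galg)^F$. Specializing to $d=2$ gives $|\Zalg^F|\le 2$, so $\Zalg^F$ is a group of order at most $2$ and therefore $\Aut(\Zalg^F)$ is trivial. In particular every automorphism of $\Galg^F$ stabilizes each linear character of $\Zalg^F$; so the subgroup $K=\cyc{F_0}$ of field automorphisms acts trivially on $\Zalg^F$, which is precisely the remaining hypothesis of Theorem~\ref{th:equivbij}.

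Having verified all three hypotheses, I would then apply Theorem~\ref{th:equivbij} directly: for every $\nu\in\Irr(\Zalg^F)$ the sets $\Irr_{p'}(\Galg^F|\nu)$ and $\Irr_{p'}(\Balg^F|\nu)$ are $A$-equivalent, which is the assertion of the corollary. Since the whole argument is a one-line reduction to Theorem~\ref{th:equivbij}, there is no genuine obstacle here; the only step that needs a word of justification is the bound $|\Zalg^F|\le 2$, and I would supply this by pointing to the structural facts about the diagonal automorphism group $D$ already invoked earlier in the paper (around Lemmas~\ref{resserie} and~\ref{nuG}).
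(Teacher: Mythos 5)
Your proposal is correct and follows essentially the same route as the paper: the paper's own proof simply observes that when $d=2$ every automorphism of $\Galg^F$ acts trivially on $\Zalg^F$ (since $|\Zalg^F|\le 2$, exactly the fact you justify via $|\Zalg^F|\in\{1,d\}$) and then invokes Theorem~\ref{th:equivbij}. Your version just makes the order bound and the triviality of $\Aut(\Zalg^F)$ explicit, which is a harmless elaboration of the same argument.
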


\begin{proof}
If $d=2$, then every automorphisms of $\Galg^F$ acts trivially on
$\Zalg^F$. The result is then as direct consequence of
Theorem~\ref{th:equivbij}.
\end{proof}

%%%%%%%%%%%%%%%%%%%%%%%%%%%%%%%%%%%%%%%%%%%%%%%%%%%%%%%%%%%%%%%%%%%%%%%%%%%%%%%%

\section{Inductive McKay condition}
\label{indcond}
In this section, we prove our main result, Theorem~\ref{main}. Our
main ingredient will be the equivariant bijection constructed in 
Corollary~\ref{cor:equi2}. The largest part of this section will be
devoted to the verification of the cohomology condition.

\subsection{Setup}\label{proofsetup}
Throughout this whole Section~\ref{indcond}, let $\Galg$ be a simple
simply-connected algebraic group defined over $\F_q$ of characteristic 
$p>0$ with corresponding Frobenius map $F$ such that the
quotient $$X=\Galg^F/Z(\Galg^F)$$ is a finite simple group and $\Galg^F$
is the universal cover of $X$. Let $\Zalg=Z(\Galg)$ be the center of
the algebraic group $\Galg$ and suppose that $|\Zalg|=2$.
As in Section~\ref{grpsetup}, we embed
$\Galg$ in a connected reductive group $\widetilde{\Galg}$ with an
$\F_q$-rational structure obtained by extending $F$ to
$\widetilde{\Galg}$, such that the center of~$\widetilde{\Galg}$ is
connected and the groups $\widetilde{\Galg}$ and $\Galg$ have the same
derived subgroup. We assume that $p$ is a good prime for $\Galg$. 
Fix an $F$-stable maximal torus $\Talg$ of~$\Galg$ contained in an
$F$-stable Borel subgroup $\Balg$ of~$\Galg$ and
write~$\widetilde{\Talg}$ for the unique $F$-stable 
torus of $\widetilde{\Galg}$ containing $\Talg$. Fix an $F$-stable
Borel subgroup $\widetilde{\Balg}$ of $\widetilde{\Galg}$
containing~$\widetilde{\Talg}$ and~$\Balg$. Let $\Ualg$ be the
unipotent radical of $\Balg$. 
As in Section~\ref{partie2}, we put $K=\cyc{F_0}$. 
Let $D$ be the group of diagonal automorphisms of~$\Galg^F$
induced by the action of $\widetilde{\Galg}^F/\Galg^F$ on~$\Galg^F$.
We set $$\mathcal{A}=\widetilde{\Galg}^F\rtimes K.$$ 
We assume that $F$ acts trivially on the Weyl group of $\Galg$. 
Note that our assumptions imply that $|D|=2$ and that $\Galg^F$
has no graph automorphisms. 

\subsection{Extensions of characters}
We assume the setup from Subsection~\ref{proofsetup}.
Fix $\nu\in\Irr(\Zalg^F)$ and let
$\Phi_{\nu}:\Irr_{p'}(\Galg^F|\nu)\rightarrow\Irr_{p'}(\Balg^F|\nu)$
be an $\mathcal A$-equivariant bijection from Corollary~\ref{cor:equi2}.
For a $D$-stable character $\chi\in\Irr_{p'}(\Galg^F|\nu)$, we define
$$G_{\chi}=\widetilde{\Galg}^F\rtimes\operatorname{Stab}_K(\chi)\quad\textrm{and}\quad
G'_{\chi}=\widetilde{\Balg}^F\rtimes\operatorname{Stab}_K(\Phi_{\nu}(\chi)).$$

\begin{lemma}\label{extension}
Let $\chi\in\Irr_{p'}(\Galg^F|\nu)$ be $D$-stable. Then,
there is an extension of $\chi$ to~$G_\chi$ and an extension of
$\Phi_\nu(\chi)$ to $G_\chi'$.
\end{lemma}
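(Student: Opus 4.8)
The plan is to obtain each extension in two steps: first extend $\chi$, resp.\ $\Phi_\nu(\chi)$, to a character of $\widetilde{\Galg}^F$, resp.\ of $\widetilde{\Balg}^F$, that is invariant under $\Stab_K(\chi)$, and then extend this further to $G_\chi$, resp.\ $G'_\chi$, using that the quotients $G_\chi/\widetilde{\Galg}^F\cong\Stab_K(\chi)$ and $G'_\chi/\widetilde{\Balg}^F\cong\Stab_K(\Phi_\nu(\chi))$ are cyclic, so an invariant character of the normal subgroup extends by~\cite[(11.22)]{isaacs}. Since $\Phi_\nu$ is $\mathcal{A}$-equivariant, $\Phi_\nu(\chi)$ is $D$-stable and $\Stab_K(\Phi_\nu(\chi))=\Stab_K(\chi)$; write $\Stab_K(\chi)=\langle F'\rangle$ with $F'=F_0^c$, a Frobenius map which commutes with $F=F'^m$ (where $m=n/c$) and, like $F_0$, acts trivially on the root system, since $\Galg^F$ has no graph automorphisms and $F$ acts trivially on $W$.

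For the group side, recall $\widetilde{\Galg}^F=\Galg^F\widetilde{\Talg}^F$, so the group $D$ of diagonal automorphisms is realized by $\widetilde{\Talg}^F$-conjugation; hence the $\widetilde{\Talg}^F$-orbit of the $D$-stable character $\chi$ is a singleton. By Lemma~\ref{parasemi} we may write $\chi=\rho_{s,z}$, and the last paragraph of Subsection~\ref{regsemi} then forces $\rho_s$ to be irreducible with $|A_{\Galg^*}(s)^{F^*}|=1$; since $|\Zalg|=2$ gives $|A_{\Galg^*}(s)|\le 2$ and $F^*$ acts trivially on any group of order $\le 2$, we get $A_{\Galg^*}(s)=1$, i.e.\ $\Cen_{\Galg^*}(s)$ is connected. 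Then the geometric class $[s]_{\Galg^*}$ meets $\Galg^{*F^*}$ in a single class, and by Theorem~\ref{imageF} the $F'$-stability of $\chi=\rho_s$ makes $[s]_{\Galg^*}$ an $F'^*$-stable conjugacy class; by Lang--Steinberg we may replace $s$ by an $F'^*$-fixed (hence $F^*$-fixed) representative without changing $\rho_s$, and then lift it through the surjection $i^*$ (whose kernel is a torus, so Lang--Steinberg applies again) to an $F'^*$-fixed $\widetilde{s}\in\widetilde{\Galg}^*$ with $i^*(\widetilde{s})=s$. Now $\rho_{\widetilde{s}}$ is irreducible ($\widetilde{\Galg}$ has connected center) and $\rho_{\widetilde{s}}|_{\Galg^F}=\rho_s=\chi$, so it extends $\chi$; moreover Lemma~\ref{ImgDL} applied to $\widetilde{\Galg}$, together with $F'$ acting trivially on $W$ and $F'^{*-1}(\widetilde{s})=\widetilde{s}$, gives $F'(\rho_{\widetilde{s}})=\rho_{F'^{*-1}(\widetilde{s})}=\rho_{\widetilde{s}}$. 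Thus $\rho_{\widetilde{s}}$ is $G_\chi$-invariant and extends to $G_\chi$, and the restriction of that extension to $\Galg^F$ is $\chi$.

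For the Borel side, pass to $\Balg_0^F$: inflation identifies $\Irr_{p'}(\Balg^F)$ with $\Irr(\Balg_0^F)$ compatibly with the actions of $D$ and $K$, and likewise for $\widetilde{\Balg}$, so it suffices to extend the character $\chi'$ of $\Balg_0^F$ corresponding to $\Phi_\nu(\chi)$ to $\widetilde{\Balg}_0^F\rtimes\langle F'\rangle$, where $\widetilde{\Balg}_0=\Ualg_1\rtimes\widetilde{\Talg}$. Write $\chi'=\chi_{J,j,\psi}$ as in Lemma~\ref{charB}. Since $F'$ acts trivially on $\Delta$, the set $J$ is $F'$-stable; by Lemma~\ref{deforb} the $\widetilde{\Talg}^F$-orbit $D_{J,\psi,i(J)}$ of the $D$-stable character $\chi'$ is a singleton, so $i(J)=|H_J^F|=1$ (using Remark~\ref{rk:choix}), whence $j=1$ and $\Talg_J^F=\Talg_J^{\circ F}$; and by Lemma~\ref{la:FrobchrB} the $F'$-stability of $\chi'$ gives $F'(\psi)=\psi$, while we take $\phi_J$ to be $F'$-stable by Remark~\ref{rk:charBFrob}. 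Let $\widetilde{\Talg}_J=Z(\widetilde{\Lalg}_J)$ be the stabilizer of $\phi_J$ in $\widetilde{\Talg}$; it is a connected, $F'$-stable torus containing $\Talg_J^\circ$. Using Lemma~\ref{surjnorm} for the connected torus $\Talg_J^\circ$, write $\psi=\psi_1\circ N_{F'^m/F'}$ with $\psi_1\in\Irr(\Talg_J^{\circ F'})$, extend the linear character $\psi_1$ to some $\widetilde{\psi}_1\in\Irr(\widetilde{\Talg}_J^{F'})$ (linear characters of subgroups of abelian groups always extend), and set $\widetilde{\psi}:=\widetilde{\psi}_1\circ N_{F'^m/F'}\in\Irr(\widetilde{\Talg}_J^F)$, which is $F'$-stable and restricts to $\psi$ on $\Talg_J^F$. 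Put $\widetilde{\chi}':=\Ind_{\Ualg_1^F\widetilde{\Talg}_J^F}^{\widetilde{\Balg}_0^F}(\phi_J\bt\widetilde{\psi})$. This character is $F'$-stable; it is irreducible and restricts to $\chi'$ because $\widetilde{\Talg}^F=\Talg^F\widetilde{\Talg}_J^F$, which follows from the equality $\widetilde{\Talg}=\Talg\,\widetilde{\Talg}_J$ of algebraic groups (as $Z(\widetilde{\Galg})^\circ\subseteq\widetilde{\Talg}_J$ and $\widetilde{\Talg}=\Talg\,Z(\widetilde{\Galg})^\circ$) together with $H^1(F,\Talg_J)=1$ (here $H^1(F,H_J)=1$ since $H_J^F=1$), so that the relevant Mackey decomposition has a single double coset. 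Finally $\widetilde{\chi}'$ is $\langle F'\rangle$-invariant, hence extends to $\widetilde{\Balg}_0^F\rtimes\langle F'\rangle$, and inflating back yields the required extension of $\Phi_\nu(\chi)$ to $G'_\chi$.

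The main obstacle is the construction of the $\Stab_K(\chi)$-invariant extensions to $\widetilde{\Galg}^F$ and $\widetilde{\Balg}_0^F$. On the group side this means combining $|A_{\Galg^*}(s)^{F^*}|=1$ with $|\Zalg|=2$ to force a connected centralizer, and then choosing $\widetilde{s}$ fixed by both $F^*$ and $F'^*$; on the Borel side it is the norm-map construction of $\widetilde{\psi}$ and the verification, via $\widetilde{\Talg}=\Talg\,\widetilde{\Talg}_J$ and the vanishing of $H^1(F,\Talg_J)$, that the induced character $\widetilde{\chi}'$ is a genuine extension of $\chi'$ rather than a proper induced character. The remaining steps are routine Clifford theory together with the results of Sections~\ref{partie1} and~\ref{partborel}.
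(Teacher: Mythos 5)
Your proof is correct, and on the group side it is essentially the paper's own argument: $D$-stability forces $|A_{\Galg^*}(s)^{F^*}|=1$ so that $\chi=\rho_s$, one replaces $s$ by an $F'$-stable representative, lifts it through $i^*$ to an $F'$-stable $\widetilde{s}$, and $\rho_{\widetilde{s}}$ is an $F'$-stable extension of $\chi$ which extends further to $G_\chi$ because $\Stab_K(\chi)$ is cyclic; you merely supply the Lang--Steinberg and $|A_{\Galg^*}(s)|\le|\Zalg|=2$ justifications that the paper leaves implicit. On the Borel side, however, you take a genuinely different route for the key step, namely producing a $\Stab_K(\chi)$-stable extension of $\chi_{J,1,\psi}$ to $\widetilde{\Balg}_0^F$. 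The paper invokes \cite[0.5]{DM} to split $\widetilde{\Talg}_J=\Talg_J\times\Halg$, computes $\Ind_{\Balg_0^F}^{\widetilde{\Balg}_0^F}(\chi_{J,1,\psi})=\sum_{\eta}\widetilde{\chi}_{J,1,\psi\otimes\eta}$, thereby identifying all extensions, and picks the visibly $F'$-stable one $\widetilde{\chi}_{J,1,\psi\otimes 1_{\Halg^F}}$. You instead construct an $F'$-stable extension $\widetilde{\psi}$ of $\psi$ directly: descend $\psi$ through the norm map of the connected torus $\Talg_J^{\circ}$ (Lemma~\ref{surjnorm}), extend the resulting linear character at the level of $F'$-fixed points of $\widetilde{\Talg}_J$, pull back via the norm, and then verify that $\Ind_{\Ualg_1^F\widetilde{\Talg}_J^F}^{\widetilde{\Balg}_0^F}(\check{\phi}_J\otimes\widetilde{\psi})$ restricts to $\chi_{J,1,\psi}$ by the factorization $\widetilde{\Talg}^F=\Talg^F\widetilde{\Talg}_J^F$ (correctly reduced to $H^1(F,\Talg_J)=1$, i.e.\ $H_J^F=1$) and a single-double-coset Mackey computation. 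Both arguments are valid. Your version avoids choosing a direct complement $\Halg$ at all --- in particular it sidesteps the point, left implicit in the paper, that the splitting and the character $1_{\Halg^F}$ interact well with $F'$ --- and it also does not need $\Talg_J$ itself to be connected, only $\Talg_J^F=\Talg_J^{\circ F}$; the price is the extra factorization/Mackey verification, whereas the paper's splitting yields the full list of extensions of $\chi_{J,1,\psi}$ at once, which is more information than the lemma requires.
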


\begin{proof}
There is an integer $i>0$ dividing~$|K|$ such that
$\operatorname{Stab}_K(\chi)=\cyc{F_0^i}$. Let $F'=F_{0}^i$ and
$m=|K|/i$. Then $F'^m=F$. 

First, we show that $\chi$ extends to $G_\chi$. Since $\chi$ is
semisimple, there is a semisimple element $s\in\Galg^{*F^*}$
satisfying $\chi=\rho_s$, where~$\rho_s$ is defined in
equation~(\ref{eq:rhos}). So, one has $F'(\rho_s)=\rho_s$ if
and only if $F'(s)$ and~$s$ are conjugate in $\Galg^{*F^*}$ (see
Theorem~\ref{imageF}). Since $\widetilde{\Galg}^F$ stabilizes
$\rho_s$, we have $A_{\Galg^*}(s)^{F^*}=\{1\}$. In particular,
we can suppose that $s$ is $F'$-stable. Choose an $F'$-stable element
$\widetilde{s}\in\widetilde{\Galg}^{*F^*}$ such that
$i^*(\widetilde{s})=s$. Then $\rho_{\widetilde{s}}$ is an extension of
$\rho_s$ and $F'(\rho_{\widetilde{s}})= \rho_{\widetilde{s}}$ because
$F'(\widetilde{s})=\widetilde{s}$ by Theorem~\ref{imageF}.  
Moreover, since $\operatorname{Stab}_K(\rho_s)$ is cyclic,
$\rho_{\widetilde{s}}$ extends to $G_{\chi}$ and so $\chi=\rho_s$
extends to $G_{\chi}$, as required.

Next, we prove that $\Phi_\nu(\chi)$ extends to $G_\chi'$.
In the notation of Lemma~\ref{charB}, we have 
$\Phi_{\nu}(\chi)=\chi_{J,j,\psi}$ for some $J\subseteq \Delta$, some
positive integer $j$ and some $\psi\in\Irr(\Talg_J^F)$, where 
$\Talg_J$ is the stabilizer of $x\in\Ualg_J^*$ in~$\Talg$ 
as in equation~(\ref{eq:Tj}). Since $\chi_{J,j,\psi}$ is
$\widetilde{\Balg}_0^F$-stable, it follows that $i(J)=1$ and $j=1$.   
Let $\widetilde{\Talg}_J$ be the stabilizer of $x\in\Ualg_J^*$ 
in~$\widetilde{\Talg}$ and write $\phi_J$ for the irreducible character
of $\Ualg_J^F$ chosen for the construction of $\chi_{J,1,\psi}$ in
Lemma~\ref{charB}.  
As in equation~(\ref{eq:hat}), we extend 
$\phi_J$ to the characters $\hat{\phi}_J$ and $\check{\phi}_J$ of 
$\Ualg_1^F\rtimes \Talg_J^F$ and
$\Ualg_1^F\rtimes\widetilde{\Talg}_J^F$, respectively.
The groups $\Talg_J$ and $\widetilde{\Talg}_J$ are connected.
Then by~\cite[0.5]{DM}, there is an $F$-stable torus $\Halg$ such that
$\widetilde{\Talg}_J=\Talg_J\times\Halg$. In particular, 
$\widetilde{\Talg}_J^F=\Talg_J^F\times\Halg^F$ and by~\cite[6.17]{isaacs}
$$\Ind_{\Ualg_1^F\rtimes\Talg_J^F}^{\Ualg_1^F\rtimes\widetilde{\Talg}_J^F}
(\hat{\phi}_J\otimes\psi)=
\sum_{\eta\in\Irr(\Halg^F)}\check{\phi}_J\otimes\psi\otimes\eta.$$
Then we deduce
$$\Ind_{\Balg_0^F}^{\widetilde{\Balg}_0^F}(\chi_{J,1,\psi})=
\sum_{\eta\in\Irr(\Halg^F)}\widetilde{\chi}_{J,1,\psi\otimes\eta},$$
where $\widetilde{\chi}_{J,1,\psi\otimes\eta}$ is the irreducible
character of $\widetilde{\Balg}_0^F$ constructed in Lemma~\ref{charB}.
In particular, the characters $\widetilde{\chi}_{J,1,\psi\otimes\eta}$
are the extensions of $\chi_{J,1,\psi}$ to $\widetilde{\Balg}_0^F$.
Because $\Phi_\nu$ is $\mathcal{A}$-equivariant, we have 
$\Stab_K(\chi_{J,1,\psi})=\Stab_K(\Phi_\nu(\chi))=\Stab_K(\chi)=\cyc{F'}$ where $F'^m=F$
as at the beginning of the proof. Since $F'(\chi_{J,1,\psi})=\chi_{J,1,\psi}$,
Lemma~\ref{la:FrobchrB} implies
$F'(\psi)=\psi$. Thus, $\widetilde{\psi}=\psi\otimes 1_{\Halg^F}$ is
$F'$-stable. Hence, the character $\widetilde{\chi}_{J,1,\widetilde{\psi}}$
is an $F'$-stable extension of $\chi_{J,1,\psi}$ to
$\widetilde{\Balg}_0^F$. So, there is an $F'$-stable extension of
$\Phi_\nu(\chi)$ to $\widetilde{\Balg}^F$. Since
$G_\chi'/\widetilde{\Balg}^F$ is cyclic, we get an extension of 
$\Phi_\nu(\chi)$ to $G_\chi'$, and the proof is complete.
\end{proof}

\subsection{Proof of Theorem~\ref{main}}
We assume the setup from Subsection~\ref{proofsetup}. As described
in~\cite[Section~10]{IMN}, for each irreducible character $\chi$ of a
covering group of $X$, we have to construct a group $G_\chi$
satisfying certain conditions. Because $\Zalg^F$ has order $2$, there
is only one non-trivial covering group of $X$, namely
$S=\Galg^F$. Instead of treating the faithful irreducible characters
of $S$ and the irreducible characters of $X$ separately, we consider
the latter as (non-faithful) characters of $S$ and deal with both
cases simultaneously; see also the proof of \cite[(15.3)]{IMN}. In
case $\chi$ is not faithful, all of the groups constructed in the
following have to be considered ``modulo $\ker(\chi)$''. 

Since $\Galg^F$ has no graph automorphisms, 
the conjugation action of $\mathcal{A}$ induces  
the group of automorphisms of $\Galg^F$ stabilizing $\Ualg^F$. 
Fix a linear character $\nu\in\Irr(\Zalg^F)$. Then $\mathcal{A}$ fixes
$\nu$, since $\Zalg^F$ has order $2$. 
By Corollary~\ref{cor:equi2}, there is an $\mathcal A$-equivariant bijection
$\Phi_{\nu}:\Irr_{p'}(\Galg^F|\nu)\rightarrow\Irr_{p'}(\Balg^F|\nu)$.
In particular, we see that the conditions (1),(2),(3),(4)
of \cite[Section~10]{IMN} are satisfied. The dictionary between our
notation and the one in \cite[Section~10]{IMN} is: $\Ualg^F \leftrightarrow Q$,
$\Balg^F \leftrightarrow T$, $\Phi_\nu \leftrightarrow ()^*$ and 
$\mathcal{A} \leftrightarrow A$ (more precisely: $A$ corresponds to
the set of automorphisms induced by the conjugation action of
$\mathcal{A}$ on $\Galg^F$).

Next, we construct the group $G_\chi$. Fix
$\chi\in\Irr_{p'}(\Galg^F|\nu)$. First, suppose that $\chi$ is not 
$D$-stable. In this case, we set
$$G_{\chi}=\Galg^F\rtimes\operatorname{Stab}_{KD}(\chi)\quad\textrm{and}\quad
G'_{\chi}=\Balg^F\rtimes\operatorname{Stab}_{KD}(\Phi_{\nu}(\chi)).$$
We have to verify conditions (5)-(8) of \cite[Section~10]{IMN}. We have
$\Zalg^F \subseteq Z(G_\chi)$ and the stabilizer of $\chi$ in $\mathcal A$
is induced by the conjugation action of $\Nn_{G_{\chi}}(\Ualg^F)$. 
Note that $\Nn_{G_{\chi}}(\Balg^F)=G'_{\chi}$. Moreover, the group
$C=\Cen_{G_{\chi}}(\Galg^F)=\operatorname{Z}(\Galg^F)$
is abelian and the set $\Irr(C|\nu)$ contains the $G_\chi$-invariant
character $\gamma=\nu$. Thus, conditions (5)-(7) are true in this case.
Note that the factor groups $G_\chi/\Galg^F$ and $G_\chi'/\Balg^F$ are
isomorphic to a subgroup of~$K$ and hence are cyclic. So, the cohomology
groups $H^2(G_\chi/\Galg^F, \C^\times)$ and
$H^2(G_\chi'/\Balg^F, \C^\times)$ are trivial, and therefore condition
(8) holds in this case, too.

Now suppose that $\chi\in\Irr_{p'}(\Galg^F|\nu)$ is $D$-stable.
In this case, we set
$$G_{\chi}=\widetilde{\Galg}^F\rtimes\operatorname{Stab}_{K}(\chi)\quad\textrm{and}\quad
G'_{\chi}=\widetilde{\Balg}^F\rtimes\operatorname{Stab}_{K}(\Phi_{\nu}(\chi)).$$
Again, we have to verify conditions (5)-(8) of \cite[Section~10]{IMN}.
We have $\Zalg^F \subseteq Z(G_\chi)$ and the stabilizer of $\chi$ in
$\mathcal A$ is induced by the conjugation action of $\Nn_{G_{\chi}}(\Ualg^F)$. 
Note that $\Nn_{G_{\chi}}(\Balg^F)=G'_{\chi}$. Moreover, the group
$C=\Cen_{G_{\chi}}(\Galg^F)=\operatorname{Z}(\widetilde{\Galg}^F)$
is abelian. So, conditions (5) and (6) of \cite[Section~10]{IMN} are satisfied.

By Lemma~\ref{extension}, there exists an extension $\widetilde{\chi}'$ of
$\chi'=\Phi_\nu(\chi)\in\Irr_{p'}(\Balg^F|\nu)$ to~$G_{\chi}'$.
For abbreviation, we set $M=\Balg^F$.
Since $M\subseteq MC\subseteq G_{\chi}'$, the character
$\Res_{MC}^{G_{\chi}'}(\widetilde{\chi}')$ is irreducible. Since
$MC$ is a central product, there are
$\chi_{M}'\in\Irr(M)$ and $\chi_C'\in\Irr(C)$ satisfying 
\begin{equation}
\label{eq:res}
\Res_{MC}^{G_{\chi}'}(\widetilde{\chi}')=\chi_{M}'\cprod\chi_C',
\end{equation}
see Subsection~\ref{cenprod}. Furthermore, for $g\in M$, one has
$\Res_{MC}^{G_{\chi}'}(\widetilde{\chi}')(g)=\chi'(g)$, so
$\chi_M'=\chi'$. Moreover, since $\chi_M$ and $\widetilde{\chi}'$ are
$G_{\chi}'$-stable, it follows from equation~(\ref{eq:res}) that
$\chi_C'$ is $G_{\chi}'$-stable. Note that
$$\Res_{\operatorname{Z}(M)}^{G_{\chi}'}(\widetilde{\chi}')=
\Res_{\operatorname{Z}(M)}^M(\chi')=\chi'(1)\cdot\nu.$$
Hence, equation~(\ref{eq:res}) implies
$\Res_{\operatorname{Z}(S)}^{C}(\chi_C')=\nu$. We put
\begin{equation}
\label{eq:gamma}
\gamma=\chi_C'.
\end{equation} 
It follows that $\gamma$ is a $G_{\chi}'$-stable and hence
$G_\chi$-stable element of $\Irr(C|\nu)$. So, condition (7)
of \cite[Section~10]{IMN} holds in this situation.

We choose a positive integer $i$ dividing~$|K|$ such that
$\operatorname{Stab}_K(\chi)=\cyc{F_0^i}$. Let $F'=F_{0}^i$ and
$m=|K|/i$. Then $F'^m=F$. Put
$$E(\chi)=\{\vartheta\in\Irr(\widetilde{\Galg}^F)\ |\
\Res_{\Galg^F}^{\widetilde{\Galg}^F}(\vartheta)=\chi\}.$$
Since the semisimple character $\chi$ is $F'$-stable, there is an
$F'$-stable semisimple element $s\in\Galg^{*F^*}$ satisfying $\chi=\rho_s$. 
% Note that $E(\chi)$ is non-empty because $\chi$ extends to
%$\widetilde{\Galg}^F$ (see the second part of the proof) and
%Clifford theory implies $|E(\chi)|=q-1$. 
Choose an $F'$-stable semisimple element
$\widetilde{s}\in\widetilde{\Galg}^{*F^*}$ such that
$i^*(\widetilde{s})=s$. Then,
$$E(\chi)=\{\rho_{\widetilde{s}t}\ |\
t\in\operatorname{Z}(\widetilde{\Galg}^*)^{F^*}\}.$$
In particular, $|E(\chi)|=|\operatorname{Z}(\widetilde{\Galg}^F)|$.
Moreover, note that $\rho_{\widetilde{s}t}$ is $F'$-stable if and only
if $F'^*(t)=t$. In particular, one has
$$|E(\chi)^{F'}|=|\operatorname{Z}(\widetilde{\Galg}^*)^{F'^*}|=p^i-1.$$
Put
$$\quad L(\chi)=\{\varphi\in\Irr(\Galg^FC)\ |\
\Res_{\Galg^F}^{\Galg^FC}(\varphi)=\chi\}.$$
Since $\Galg^FC$ is a central product, every character of $\Galg^F$
extends to $\Galg^FC$. Now, using~\cite[6.17]{isaacs}, one has
$L(\chi)=\{\varphi\otimes\xi\ |\ \xi\in \Galg^FC/\Galg^F\}$. In
particular, $|L(\chi)|=|C|/2$.  Since $\chi$ is $F'$-stable, we have
$\varphi=\chi\otimes\xi\in L(\chi)^{F'}$ if and only if $\xi$ is
$F'$-stable. Because $C=\operatorname{Z}(\widetilde{\Galg}^F)$, it
follows that
\begin{equation}
\label{eq:ordL}
|L(\chi)^{F'}|=\frac{1}{2}\cdot|\operatorname{Z}(\widetilde{\Galg})^{F'}|=\frac
 1 2 (p^i-1)=\frac 1 2 |E(\chi)^{F'}|.
\end{equation}
Consider the map
$$\Theta:E(\chi)^{F'}\rightarrow L(\chi)^{F'},\,\vartheta\mapsto
\Res_{\Galg^FC}^{\widetilde{\Galg}^F}(\vartheta).$$
Note that $\Theta$ is well-defined because
%$\Theta(\vartheta)=\chi\cprod\zeta$ for some $\zeta\in\Irr(C)$. Since
$$F'(\Theta(\vartheta))=\Theta(F'(\vartheta))=\Theta(\vartheta).$$
By~\cite[6.B]{BonnafeSLn}, we know that
$|\widetilde{\Galg}^F/\Galg^FC|=2$. So Clifford
theory (see~\cite[6.19]{isaacs}) and equation~(\ref{eq:ordL}) imply
\begin{equation}
\label{eq:qq}
|\Theta(E(\chi)^{F'})|\ge\frac 1 2 |E(\chi)^{F'}|=|L(\chi)^{F'}|.
\end{equation}
So, $\Theta$ is surjective. In particular, for the
irreducible $F'$-stable character $\gamma\in\Irr(C|\nu)$ defined in
equation~(\ref{eq:gamma}), the character $\chi\cprod\gamma$ (which lies
in $L(\chi)^{F'}$) extends to an $F'$-stable character
$\widetilde{\chi}$ of $\widetilde{\Galg}^F$. So,
by~\cite[11.22]{isaacs}, the character $\widetilde{\chi}$ extends to 
$G_{\chi}$, because  $\Stab_K(\chi)$ is cyclic and stabilizes
$\widetilde{\chi}$. In particular, $\chi\cprod\gamma$ extends to $G_{\chi}$.

So, we have shown that there is a $G_{\chi}$-stable irreducible
character $\gamma$ of $C$ over $\nu$ such that $\chi\cprod\gamma$ and
$\Phi_{\nu}(\chi)\cprod\gamma$ extend to extend to $G_{\chi}$ and
$\Nn_{G_{\chi}}(M)$, respectively. Then, by~\cite[11.7]{isaacs} we have
$$[\chi\cprod\gamma]_{G_{\chi}/\Galg^FC}=
[\Phi_{\nu}(\chi)\cprod\gamma]_{\Nn_{G_{\chi}}(M)/MC}.$$ 
Thus, condition (8) of \cite[Section~10]{IMN} is satisfied, too. Hence, $X$ is
``good'' for $p$ and the proof of Theorem~\ref{main} is complete. \hfill $\square$

\begin{remark} The following can be said about those finite simple
groups of type $B_m$, $C_m$ or $E_7$ which are not included in
Corollary~\ref{cormain}:
\begin{enumerate}
\item[(a)] By~\cite[Remark 2]{Br6} and~\cite{Cabanes}, all simple
groups $B_m(2^n)$ (isomorphic to $C_m(2^n)$) are ``good'' for the
defining characteristic $p=2$.

\item[(b)] Theorem~5 in \cite{Br6} implies that all simple groups
$E_7(2^n)$ are ``good'' for $p=2$. 

\item[(c)] Our methods were not sufficient to show that the simple
groups $E_7(3^n)$ are ``good'' for $p=3$. By Remark~\ref{rk:good}, it
is enough to prove that the following properties hold. First, 
the group $\Galg^F$ (here, $\Galg$ denotes a simple simply-connected
group of type $E_7$ defined over a finite field of characteristic $3$
and $F$ denotes the corresponding Frobenius map) satisfies the relative McKay
conjecture at the prime $p=3$. Second, 
the number of semisimple characters $\rho_s$ of $\Galg^F$
lying over $\nu\in\Irr(\operatorname{Z}(\Galg^F))$ and
such that the group $\Cen_{\Galg^{*F^*}}(s)$ is connected, does not
depend on $\nu$. 
\end{enumerate}
\end{remark}

\section*{Acknowledgements}

We thank G.~Malle for fruitful discussions on the subject, for his
reading of the manuscript and for comments on this paper.

%\bibliographystyle{plain}
%\bibliography{references}

\begin{thebibliography}{10}
\bibitem{BonnafeSLn}
C.~Bonnaf\'{e}.
\newblock Sur les caract\`eres des groupes r\'eductifs finis \`a
centre non connexe: applications aux groupes sp\'eciaux lin\'eaires et
unitaires. 
\newblock {\em Ast\'erisque}, 306, 2006.

\bibitem{Borel}
A.~Borel, R.~Carter, C.W.~Curtis, N.~Iwahori, T.A.~Springer and
R.~Steinberg. 
\newblock Seminar on algebraic groups and related finite groups.
\newblock {\em Lecture Notes in Mathematics}, 131, Springer, 1970.

\bibitem{Br8}
O.~Brunat.
\newblock Counting $p'$-characters in finite reductive groups.
\newblock {\em Preprint}.

\bibitem{Br6}
O.~Brunat.
\newblock On the inductive {M}c{K}ay condition in the defining characteristic.
\newblock {\em to appear in Math.~Z}.

\bibitem{Cabanes}
M.~Cabanes.
\newblock Odd character degrees for $\operatorname{Sp}_{2n}(2)$.
\newblock {\em Preprint.}

\bibitem{Carter1}
R.W. Carter.
\newblock {\em Simple groups of {L}ie type}.
\newblock John Wiley \& Sons, London-New York-Sydney, 1972.
\newblock Pure and Applied Mathematics, Vol. 28.

\bibitem{Carter2}
R.W. Carter.
\newblock {\em Finite groups of {L}ie type}.
\newblock Pure and Applied Mathematics (New York). John Wiley \& Sons Inc., New
  York, 1985.
\newblock Conjugacy classes and complex characters, A Wiley-Interscience
  Publication.

\bibitem{DM}
F.~Digne and J.~Michel.
\newblock {\em Representations of finite groups of {L}ie type}, volume~21 of
  {\em London Mathematical Society Student Texts}.
\newblock Cambridge University Press, Cambridge, 1991.

\bibitem{DLM}
F.~Digne, G.~I. Lehrer, and J.~Michel.
\newblock {\em The characters of the group of rational points of a reductive
group with non-connected centre}.
\newblock {\em J. Reine Angew. Math.}, 425:155--192, 1992.

\bibitem{DLM2}
F.~Digne, G.~I. Lehrer, and J.~Michel.
\newblock On {G}el\cprime fand-{G}raev characters of reductive groups with
  disconnected centre.
\newblock {\em J. Reine Angew. Math.}, 491:131--147, 1997.

\bibitem{sol}
D.~Gorenstein, R.~Lyons, and R.~Solomon.
\newblock {\em The classification of the finite simple groups, {N}umber 3},
  volume~40 of {\em Mathematical Surveys and Monographs}.
\newblock American Mathematical Society, Cambridge, 1991.

\bibitem{HH1}
F.~Himstedt and S.-c.~Huang.
\newblock Character table of a {B}orel subgroup of the {R}ee groups ${^2F_4(q^2)}$.
\newblock {\em LMS J. Comput. Math.}, 12:1--53, 2009.

\bibitem{HH2}
F.~Himstedt and S.-c.~Huang.
\newblock Uno's invariant conjecture for {S}teinberg's triality groups
in defining characteristic.  
\newblock {\em J. Algebra}, 316:79--108, 2007.

\bibitem{Hum}
J.E. Humphreys, {\em Linear algebraic groups}, Springer, New York, 1981.

\bibitem{isaacs}
I.M. Isaacs.
\newblock {\em Character theory of finite groups}.
\newblock Academic Press [Harcourt Brace Jovanovich Publishers], New York,
  1976.
\newblock Pure and Applied Mathematics, No. 69.

\bibitem{IMN}
I.M. Isaacs, G.~Malle, and G.~Navarro.
\newblock A reduction theorem for the {M}c{K}ay conjecture.
\newblock {\em Invent. Math.}, 170:33--101, 2007.

\bibitem{howlett}
R.B. Howlett.
\newblock On the degrees of {S}teinberg characters of {C}hevalley groups.
\newblock {\em Math. Z.}, 135:125--135, 1973/74.

\bibitem{lehrer}
G.~Lehrer.
\newblock On a conjecture of Alperin and McKay.
\newblock {\em Math. Scand.}, 43:5--10, 1978.

\bibitem{malleheight0}
G.~Malle.
\newblock Height 0 characters of finite groups of {L}ie type.
\newblock {\em Represent. Theory}, 11:192--220, 2007.

\bibitem{mallenotlie}
G.~Malle.
\newblock The inductive McKay condition for simple groups not of Lie type.
\newblock {\em Comm. Algebra}, 36:455--463, 2008.

\bibitem{malleextuni}
G.~Malle. 
\newblock Extensions of unipotent characters and the inductive {M}c{K}ay condition.
\newblock {\em J. Algebra}, 320:2963--2980, 2008.

\bibitem{Maslowski}
J.~Maslowski.
\newblock Dissertation, TU Kaiserslautern.
\newblock Work in progress.

\bibitem{spaethdiss}
B.~Sp\"ath.
\newblock Die {M}c{K}ay-{V}ermutung f\"ur quasi-einfache {G}ruppen vom {L}ie-{T}yp.
\newblock Dissertation, TU Kaiserslautern, 2007.

\bibitem{Springer}
T.~A. Springer.
\newblock {\em Linear algebraic groups}, volume~9 of {\em Progress in
  Mathematics}.
\newblock Birkh\"auser Boston Inc., Boston, MA, second edition, 1998.

\bibitem{steinberg}
R.~Steinberg.
\newblock {\em Lectures on {C}hevalley groups},
\newblock Yale University, 1968.



%\bibitem{LusDis}
%G.~Lusztig.
%\newblock On the representations of reductive groups with disconnected centre.
%\newblock {\em Ast\'erique}, 168:157--166, 1988.

\end{thebibliography}
%\end{document}

\def\cprime{$'$}

\end{document}